\newcommand{\ud}[0]{\,\mathrm{d}}
\newcommand{\ceil}[1]{\lceil #1 \rceil}
\newcommand{\Bceil}[1]{\Big\lceil #1 \Big\rceil}
\newcommand{\dist}[0]{\operatorname{dist}}
\newcommand{\abs}[1]{|#1|}
\newcommand{\Babs}[1]{\Big|#1\Big|}
\newcommand{\norm}[2]{|#1|_{#2}}
\newcommand{\Bnorm}[2]{\Big|#1\Big|_{#2}}
\newcommand{\Norm}[2]{\|#1\|_{#2}}
\newcommand{\bNorm}[2]{\big\|#1\big\|_{#2}}
\newcommand{\BNorm}[2]{\Big\|#1\Big\|_{#2}}
\newcommand{\pair}[2]{\langle #1,#2 \rangle}
\newcommand{\Bpair}[2]{\Big\langle #1,#2 \Big\rangle}
\newcommand{\ave}[1]{\langle #1\rangle}
\newcommand{\bddlin}[0]{\mathscr{L}}
\newcommand{\id}[0]{\operatorname{id}}
\newcommand{\BMO}[0]{\operatorname{BMO}}
\newcommand{\supp}[0]{\operatorname{supp}}
\newcommand{\Car}[0]{\operatorname{Car}}
\newcommand{\loc}[0]{\operatorname{loc}}
\newcommand{\R}{\mathbb{R}}
\newcommand{\C}{\mathbb{C}}
\newcommand{\N}{\mathbb{N}}
\newcommand{\Z}{\mathbb{Z}}
\newcommand{\prob}[0]{\mathbb{P}}
\newcommand{\Exp}[0]{\mathbb{E}}
\newcommand{\D}[0]{\mathbb{D}}
\newcommand{\radem}[0]{\varepsilon}
\newcommand{\rbound}[0]{\mathscr{R}}
\newcommand{\good}[0]{\operatorname{good}}
\newcommand{\bad}[0]{\operatorname{bad}}
\newcommand{\sep}[0]{\operatorname{sep}}
\newcommand{\comment}[1]{}
\newcommand{\ontop}[2]{\begin{smallmatrix} #1 \\ #2 \end{smallmatrix}}
\numberwithin{equation}{section}
  \let\c@equation\c@subsection
\theoremstyle{plain}
\newtheorem{tbthm}{\textit{Tb} theorem}
\newtheorem{theorem}[subsection]{Theorem}
\newtheorem{proposition}[subsection]{Proposition}
\newtheorem{corollary}[subsection]{Corollary}
\newtheorem{lemma}[subsection]{Lemma}
\theoremstyle{definition}
\theoremstyle{remark}
\newtheorem{remark}[subsection]{Remark}
\newtheorem{notation}[subsection]{Notation}
\begin{document}

\title{The vector-valued non-homogeneous \(Tb\) theorem}

\author[T.~P.\ Hyt\"onen]{Tuomas P.\ Hyt\"onen}
\thanks{The author was supported by the Academy of Finland through the projects 114374 ``Vector-valued singular integrals'' and 130166 ``$L^p$ methods in harmonic analysis.''}
\address{Department of Mathematics and Statistics, University of Helsinki, P.O.B. 68, FI-00014 Helsinki, Finland}
\email{tuomas.hytonen@helsinki.fi}

\date{\today}

\subjclass[2000]{42B20, 42B25, 46B09, 46E40, 60G46}
\keywords{Calder\'on--Zygmund operator, martingale difference, paraproduct}


\begin{abstract}
The paper gives a Banach space -valued extension of the $Tb$ theorem of Nazarov, Treil and Volberg (2003) concerning the boundedness of singular integral operators with respect to a measure $\mu$, which only satisfies an upper control on the size of balls. Under the same assumptions as in their result, such operators are shown to be bounded on the Bochner spaces $L^p(\mu;X)$ of functions with values in $X$---a Banach space with the unconditionality property of martingale differences (UMD). The new proof deals directly with all $p\in(1,\infty)$ and relies on delicate estimates for the non-homogenous ``Haar'' functions, as well as McConnell's (1989) decoupling inequality for tangent martingale differences.
\end{abstract}

\maketitle

\tableofcontents

\section{Introduction}\label{intro}

The aim of this paper is to bring together two so-far distinct lines along which the classical Calder\'on--Zygmund theory has been generalized: one of them related to the domain, the other to the range of the functions under consideration. On the one hand, there has been considerable interest in singular integrals with respect to quite general measures (in particular, ones failing the doubling hypothesis), and a fairly complete theory is now available especially due to the efforts of Nazarov, Treil and Volberg \cite{NTV:Cauchy,NTV:weak,NTV:system,NTV:Tb}, and Tolsa~\cite{Tolsa:Cauchy,Tolsa:RBMO,Tolsa:LP}.

In another direction, where pioneering contributions were made by Bourgain~\cite{Bourgain:83,Bourgain:86} and Burkholder~\cite{Burkholder}, much of the classical theory of singular integrals has been extended to the setting of functions which take their values in an infinite-dimensional Banach space.
By the end of the 1980's, this theory had already advanced up to the vector-valued \(T1\) theorem proved by Figiel~\cite{Figiel:90}. A more recent twist to this second line, boosted by the work of Weis~\cite{Weis}, is the further generalization to operator-valued integral kernels, 
although still in the homogeneous (and in most cases, Euclidean--Lebesguean) situation as far as the underlying measure space is concerned.

It seems natural to ask for a unification: a vector-valued, non-homogeneous Calder\'on--Zygmund theory which would be a common generalization of the two lines of development described above. In fact, the methods of proof in the two fields are already quite suggestive of such a convergence, the interplay of probability and analysis being in the centre: Ever since the pioneering contributions, the vector-valued theory has heavily relied on probabilistic tools, especially martingale differences and their unconditionality (UMD), which is the defining property of the class of admissible spaces for most results. Also in the non-homogeneous \(Tb\) theorem~\cite{NTV:Tb}, martingale differences were employed to construct the basic decomposition of the operator, and Nazarov, Treil and Volberg have added further probabilistic ingredients which are decisive for their analysis.

\medskip

I now recall the hypotheses of the \(Tb\) theorem of Nazarov et al.\ concerning the underlying measure space and the associated Calder\'on--Zygmund operators; this will also be basic set-up of the present paper.
Let \(\mu\) be a Borel measure on \(\R^N\) which satisfies, for a real number \(d\in(0,N]\), the upper bound
\begin{equation*}
  \mu(B(x,r))\leq r^d
\end{equation*}
for any ball \(B(x,r)\) of centre \(x\in\R^N\) and radius \(r>0\). A \emph{\(d\)-dimensional Calder\'on--Zygmund kernel} is a function \(K(x,y)\) of variables \(x,y\in\R^N\), \(x\neq y\), which satisfies
\begin{equation}\label{eq:CZK1}
  \abs{K(x,y)}\leq \frac{1}{\abs{x-y}^{d}},
\end{equation}
\begin{equation}\label{eq:CZK2}
  \abs{K(x,y)-K(x',y)}+\abs{K(y,x)-K(y,x')}\leq \frac{\abs{x-x'}^{\alpha}}{\abs{x-y}^{d+\alpha}}
\end{equation}
for some \(\alpha>0\) and all variables such that $\abs{x-y}>2\abs{x-x'}$. Of course one could allow multiplicative constants in these assumptions (and some which follow), but since there will be quite many parameters involved in any case and the full generality is reached by trivial scaling arguments, I will restrict myself to the normalized situation above.

Let \(T:f\mapsto Tf\) be a linear operator acting on some functions \(f\) (this will be specified in more detail shortly). It is called a \emph{Calder\'on--Zygmund operator} with kernel \(K\) if
\begin{equation}\label{eq:Tf}
  Tf(x)=\int_{\R^N}K(x,y)f(y)\ud\mu(y)
\end{equation}
for \(x\) outside the support of \(f\).

An operator \(T\) is said to satisfy the \emph{rectangular weak boundedness property} if for all rectangles \(R\) there holds
\begin{equation*}
  \Babs{\int_{\R^N} 1_R\cdot T1_R \ud\mu}\leq\mu(R);
\end{equation*}
as usual in the related literature, a rectangle here means a set of the form \(R=x_0+\prod_{i=1}^N[-\ell_i/2,\ell_i/2)\subset\R^N\). The special case with \(\ell_i=\ell\) for all \(i\) is called a cube, and in this case \(\ell(R):=\ell\) designates its side-length. For a cube \(Q\) and \(\lambda>0\), \(\lambda Q\) is the unique cube with the same centre and \(\lambda\) times the radius of \(Q\).

A function \(b\in L^1_{\loc}(\mu)\) is called \emph{weakly accretive} if
\begin{equation*}
  \frac{1}{\mu(Q)}\Babs{\int_Q b\ud\mu}\geq\delta
\end{equation*}
for all cubes \(Q\) and some fixed \(\delta>0\). I fix two weakly accretive functions \(b_1\) and \(b_2\), which satisfy the above estimate and in addition \(\Norm{b_i}{\infty}\leq 1\). Below, the weak boundedness property will be assumed for the composition of operators \(M_{b_2}TM_{b_1}\), where \(M_b:f\mapsto b\cdot f\) designates the operator of pointwise multiplication by \(b\).

A funtion \(h\in L^1_{\loc}(\mu)\) is said to be in \(\BMO_{\lambda}^p(\mu)\), where \(\lambda,p\in[1,\infty)\), if
\begin{equation}\label{eq:defBMO}
  \Norm{h}{\BMO_{\lambda}^p(\mu)}:=\sup_Q\Big(\frac{1}{\mu(\lambda Q)}\int_Q\abs{h-\ave{h}_Q}^p\ud\mu\Big)^{1/p}<\infty,
\end{equation} 
where the supremum is over all cubes \(Q\subset\R^N\). Here \(\ave{h}_Q:=\mu(Q)^{-1}\int_Q h\ud\mu\) is the average of \(h\) on \(Q\). Let some \(\lambda>1\) be fixed from now on.

Let then \(X\) be a Banach space and \(L^p(\mu;X)\) designate the Bochner space of \(\mu\)-measurable \(X\)-valued functions with its usual norm. The question of interest in this paper is the boundedness of \(T\) on \(L^p(\mu;X)\).
For the sake of simplicity, I will concentrate on the quantitative aspect of this problem: I will assume that \(T\) is in fact defined as a continuous linear operator on the whole space \(L^p(\mu;X)\) from the beginning, but I then derive a bound \(C\) for its operator norm according to the following convention:

\begin{notation}\label{not:C}
The letter \(C\) will always indicate a finite quantity, which depends at most on the following set of parameters:
\begin{equation*}
  d,N,p,X,\alpha,\delta,\lambda, 
\end{equation*}
plus a few auxiliary ones which will be explicitly introduced below and eventually chosen in such a way that they, too, only depend on the above-mentioned list.
The numerical value of \(C\) need not be the same from one occurrence to another. An estimate of the type \(F\leq CG\) will sometimes be abbreviated to \(F\lesssim G\), and \(F\lesssim G\lesssim F\) to \(F\eqsim G\).
\end{notation}

Various ways of reducing to the a priori bounded situation have been discussed by Nazarov et al.~\cite{NTV:Tb}; here I point out just one more strategy, which is specific to the present vector-valued context: One starts by considering \(T\) on functions taking values in a finite-dimensional subspace \(X_0\subset X\). By choosing a basis of $X_0$ and considering the action of $T$ componentwise, it easily follows from the boundedness of \(T\) on \(L^p(\mu)\) (which is the conclusion of the scalar-valued \(Tb\) theorem) that it is also bounded on \(L^p(\mu;X_0)\), but the bounds resulting from such a simple argument will grow as a function of \(\dim X_0\). However, once it is shown that the norm of \(T\) on \(L^p(\mu;X_0)\) is actually bounded by a constant \(C\) independent of \(X_0\subset X\), it also follows that \(T\) extends continuously to all of \(L^p(\mu;X)\) by the density of functions with a finite-dimensional range.

It is well known that the typical singular integral operators \(T\) will not extend boundedly to \(L^p(\mu;X)\) for an arbitrary Banach space \(X\). In fact, the classical Hilbert transform \(H\) satisfies \(H\in\bddlin(L^p(\R;X))\) if (Burkholder~\cite{Burkholder}) and only if (Bourgain~\cite{Bourgain:83}) \(X\) has the UMD property, i.e., there holds
\begin{equation}\label{eq:UMD}
  \BNorm{\sum_{k=1}^n\epsilon_k d_k}{L^p(\mu;X)}
  \leq C\BNorm{\sum_{k=1}^n d_k}{L^p(\mu;X)}
\end{equation}
whenever \((d_k)_{k=1}^n\) is a martingale difference sequence in \(L^p(\mu;X)\), and \(\epsilon_k=\pm 1\). This property is known to be independent of the parameter \(p\in(1,\infty)\), and also its validity for dyadic martingales with respect to the Lebesgue measure already implies the general condition (Maurey~\cite{Maurey}). UMD implies reflexivity but not conversely, although all the ``usual'' reflexive spaces (such as the reflexive Lebesgue, Sobolev, and Besov spaces, and also the noncommutative \(L^p\) spaces) do have UMD.

It is now possible to formulate the main result:

\begin{tbthm}\label{thm:main}
Let \(X\) be a UMD space and \(1<p<\infty\).
Let \(T\) be a Calder\'on--Zygmund operator for which \(M_{b_2}TM_{b_1}\) satisfies the rectangular weak boundedness property and 
\begin{equation}\label{eq:TbBMO}
  \Norm{Tb_1}{\BMO_{\lambda}^1(\mu)}\leq 1,\qquad
  \Norm{T^*b_2}{\BMO_{\lambda}^1(\mu)}\leq 1.
\end{equation}
Then \(\Norm{T}{\bddlin(L^p(\mu;X))}\leq C\).
\end{tbthm}

The case \(X=\C\) is a version of the celebrated \(Tb\) theorem of Nazarov, Treil and Volberg~\cite{NTV:Tb}. Its known proof consists of two methodically distinct and essentially decoupled main parts, as in the classical Calder\'on--Zygmund theory. First, the \(L^2\) estimate \(\Norm{T}{\bddlin(L^2(\mu))}\leq  C\) is proved by exploiting, of course, the Hilbert space structure of \(L^2(\mu)\). Second---although historically this step preceded the first one, and was proven in the non-homogeneous context by Nazarov, Treil and Volberg in~\cite{NTV:weak}---, some weak-type \(L^1\) estimates are deduced, and here one employs the kernel conditions \eqref{eq:CZK1} and \eqref{eq:CZK2} plus the already established (or historically, postulated) \(L^2\) bound. The inequality \(\Norm{T}{\bddlin(L^p(\mu))}\leq C\) for \(p\in(1,2)\cup(2,\infty)\) then follows from the abstract principles of interpolation and duality, so it is in this sense reached somewhat indirectly.
The present contribution, as a byproduct of the vector-valued extension, also offers a new approach to the scalar-valued result in \(L^p(\mu)\), which is more direct than the one just outlined for \(p\neq 2\).

Of course, the rectangular weak boundedness property and the \(\BMO\) conditions \eqref{eq:TbBMO} are also necessary for \(Tb\) theorem~\ref{thm:main}, since they are necessary in the scalar-valued case, and one can identify \(L^p(\mu)\) as a subspace of \(L^p(\mu;X)\) by considering functions with values in any one-dimensional subspace of \(X\). One could also allow only the more restricted cubic weak boundedness property with parameter \(\Lambda\geq 1\):
\begin{equation*}
  \Babs{\int_{\R^N}1_Q\cdot T1_Q\ud\mu}\leq\mu(\Lambda Q)
\end{equation*}
for all cubes \(Q\subset\R^N\). The vector-valued proof could be extended to this situation, but the somewhat tedious refinements needed in the argument would be more or less a repetition of the corresponding steps from \cite{NTV:Tb}. Instead, this extension can be easily deduced from the work already done in the scalar case:

\begin{tbthm}\label{thm:TbWBP}
Assume the conditions of \(Tb\) theorem~\ref{thm:main}, except that the rectangular weak boundedness property of \(M_{b_2} T M_{b_1}\) is replaced by the cubic weak boundedness property with parameter \(\Lambda\geq 1\). Then \(\Norm{T}{\bddlin(L^p(\mu;X))}\leq C\), where \(C\) is also allowed to depend on \(\Lambda\).
\end{tbthm}

\begin{proof}
By Nazarov, Treil and Volberg's \(Tb\) theorem, \(\Norm{T}{\bddlin(L^2(\mu))}\leq C\), hence \(M_{b_2}TM_{b_1}\) satisfies the rectangular weak boundedness property. Thus Theorem~\ref{thm:main} applies.
\end{proof}

The necessity of the assumptions may also be exploited to derive the following immediate but interesting variant:

\begin{tbthm}\label{thm:TbL2}
Let \(T\) be a Calder\'on--Zygmund operator with \(\Norm{T}{\bddlin(L^2(\mu))}\leq 1\).
Let \(X\) be a UMD space and \(1<p<\infty\). Then \(\Norm{T}{\bddlin(L^p(\mu;X))}\leq C\).
\end{tbthm}

\begin{proof}
By the converse part of the \(Tb\) (or just \(T1\)) theorem of Nazarov, Treil and Volberg, \(T\) satisfies the rectangular weak boundedness property and \(T1,T^*1\in\BMO_{\lambda}^1(\mu)\). Hence \(Tb\) theorem~\ref{thm:main} (with \(b_1=b_2= 1\)) applies.
\end{proof}

This allows, e.g., to use the conditions of the accretive system \(Tb\) theorem of Nazarov, Treil and Volberg~\cite{NTV:system} (which, by their result, imply the \(L^2(\mu)\)-boundedness) for checking the \(L^p(\mu;X)\)-boundedness of a Calder\'on--Zygmund operator.

\medskip

In the spirit of the recent vector-valued results \cite{H:Tb,HW:T1}, \(Tb\) theorem~\ref{thm:main} also admits a generalization in the context of operator-valued kernels. Integral transformations with such kernels arise for instance when solving abstract differential equations in a Banach space, where much of the motivation for this kind of considerations originally came from; see Weis~\cite{Weis}. From Weis' work and the subsequent developments, it has been known for some time that for boundedness results analogous to the scalar-kernel case to be valid, one needs to impose conditions which are stronger than the first guess ``replace all absolute values by norms.'' Recall that an operator family \(\mathscr{T}\subset\bddlin(X)\) is called \emph{Rademacher-bounded}, or \(R\)-bounded, if there is a constant \(c\) such that for all \(n\in\Z_+\), all \(\xi_1,\ldots,\xi_n\in X\) and \(T_1,\ldots,T_n\in \mathscr{T}\),
\begin{equation}\label{eq:defRbd}
  \BNorm{\sum_{k=1}^n\radem_k T_k \xi_k}{L^2(\Omega;X)}
  \leq c\BNorm{\sum_{k=1}^n\radem_k \xi_k}{L^2(\Omega;X)},
\end{equation}
where \(\radem_k\) are the Rademacher functions, as above. Denote the smallest admissible \(c\) by \(\rbound(\mathscr{T})\) and recall the fundamental contraction principle (\cite{DJT}, 12.2), which in this language says that \(\rbound(\Lambda\cdot\id_X)\leq 2\sup_{\lambda\in\Lambda}\abs{\lambda}\) for \(\Lambda\subset\C\); this is the most important tool in handling random series as above, which will be present throughout the proofs of the various \(Tb\) theorems here.

The rule of thumb, which has guided the recent progress with operator-valued kernels, is to replace the boundedness assumptions for scalar kernels by the corresponding Rademacher-boundedness statements in the operator-valued case.
The following operator-valued \(Tb\) theorem implements this idea in the present situation. 
I give a concise statement here, and refer the reader to Section~\ref{sec:operator} for a detailed explanation of the assumptions.

\begin{tbthm}\label{thm:operator}
Let \(X\) be a UMD space and \(1<p<\infty\). 
Let \(T\) be an \(\bddlin(X)\)-valued Rademacher--Calder\'on--Zygmund operator for which \(M_{b_2}TM_{b_1}\) satisfies the rectangular weak Rademacher boundedness property. Let \(Y\subset\bddlin(X)\) and \(Z\subset\bddlin(X^*)\) be subspaces with the UMD property, and
\begin{equation*}
  \Norm{Tb_1}{\BMO_{\lambda}^{p}(\mu;Y)}\leq 1,\qquad
  \Norm{T^*b_2}{\BMO_{\lambda}^{p'}(\mu;Z)}\leq 1.
\end{equation*}
Then \(\Norm{T}{\bddlin(L^p(\mu;X))}\leq C\), where \(C\) is allowed to depend on \(Y\) and \(Z\), in addition to the usual parameters.
\end{tbthm}

For all practical purposes, \(Tb\) theorem~\ref{thm:operator} is a generalization of my operator-valued \(Tb\) theorem for the Lebesgue measure~\cite{H:Tb}, although there are minor technical points (slightly different notions of para-accretivity and weak boundedness, and the treatment in~\cite{H:Tb} of kernels \(K\) satisfying just a logarithmic version of the H\"older continuity in~\eqref{eq:CZK2}) which prevent the above result from strictly covering the earlier one. On the other hand, even for the Lebesgue measure, \(Tb\) theorem~\ref{thm:operator} improves on that of~\cite{H:Tb} in one important respect: the subspaces \(Y\) and \(Z\) are now only required to have UMD; the additional condition imposed in~\cite{H:Tb}, that their unit balls be Rademacher-bounded subsets of \(\bddlin(X)\) and \(\bddlin(X^*)\), is seen to be superfluous by the new techniques.

\begin{remark}
Since the circulation of the first preprint of this paper, the manuscript has undergone quite a substantial evolution. As some citations to this paper have already been made, based on what was written in the earlier versions, it seems appropriate to comment a little on these developments.

In my original formulation of all the $Tb$ theorems above, I also needed to impose another restriction on the Banach space $X$, in addition to the necessary UMD condition. This was the so-called \emph{RMF property}, recently introduced by McIntosh, Portal and myself \cite{HMP}, which means the boundedness $M_R:L^p(\mu;X)\to L^p(\mu)$ of the \emph{Rademacher maximal function}
\begin{equation*}
  M_Rf(x):=\rbound\Big(\Big\{\frac{1}{\mu(Q)}\int_Q f\ud\mu;Q\owns x\Big\}\Big),
\end{equation*}
where the $R$-bound is over all dyadic cubes $Q$ containing $x\in\R^N$, and the Banach space $X$ is identified with the operator space $\bddlin(\C;X)$ (or $\bddlin(\R;X)$) in a canonical way for the computation of the $R$-bound. This condition was originally studied \cite{HMP} only in the case when $\ud\mu=\ud x$, but Kemppainen~\cite{Kemppainen} has shown, in analogy to Maurey's classical result for UMD \cite{Maurey}, that the RMF property with respect to the Lebesgue measure already implies it for other measures as well, and it is also independent of the parameter $p\in(1,\infty)$ appearing in its definition \cite{HMP,Kemppainen}. This notion and the related results played an important r\^ole in finding the original weaker versions of the $Tb$ theorems above.

The RMF property was related to the estimation of the \emph{paraproduct} parts of the operator $T$ and could originally be avoided (trivially) if $Tb_1=T^*b_2=0$ and also (non-trivially but straightforwardly) in the case  when the measure $\mu$ satisfies the doubling condition $\mu(B(x,2r))\leq K\mu(B(x,r))$. Later on \cite{H:Eichstaett}, I also found a somewhat complicated argument to eliminate the RMF assumption under the  condition that $Tb_1,T^*b_2\in L^{\infty}(\mu)$, and conjectured that it should be eliminated altogether. The related ideas, although not exactly along the lines suggested in \cite{H:Eichstaett}, eventually led to the resolution of this conjecture in the form of the $Tb$ theorems as stated above.
\end{remark}

I conclude the introduction by commenting briefly on the \(L^p\)-boundedness of Cauchy integrals, a fundamental question to measure the advances in the theory of singular integrals both in the scalar-valued and the vector-valued developments. The boundedness of the Cauchy integral on the circle, i.e., the Hilbert transform, is of course a classical theorem of M.~Riesz, and the extension of this result to the UMD-valued setting by Burkholder~\cite{Burkholder}, together with the converse statement by Bourgain~\cite{Bourgain:83}, may be considered the beginning of harmonic analysis in UMD spaces.

It was around the same time that the scalar-valued \(L^p\)-boundedness problem of the Cauchy integral on arbitrary Lipschitz graphs was answered positively by Coifman, McIntosh, and Meyer~\cite{CMM}. A few years later, this could be seen as a special case of the (homogeneous) \(Tb\) theorem due to David, Journ\'e, and Semmes~\cite{DJS}. The corresponding result in UMD spaces became available after Figiel proved his vector-valued \(T1\) theorem~\cite{Figiel:90}, since this bootstraps into \(Tb\) by the same trick as in \(Tb\) theorem~\ref{thm:TbL2} above.

Finally, a precise geometric characterization of the measures for which the associated Cauchy integral is bounded was identified in terms of a local curvature condition on \(\mu\) in an accumulation of efforts by several authors \cite{MMV,MelVer,NTV:Cauchy,Tolsa:Cauchy}. As before, this then became a corollary of the more general \(Tb\) theorems. The present results, once again, bring the vector-valued theory of the Cauchy integral to the same level.


\section{Strategy of the proof with historical remarks}

Among the large family of existing \(Tb\) theorems, there is no question about the parents of the present one: they are the non-homogeneous \(Tb\) theorem of Navarov, Treil and Volberg~\cite{NTV:Tb}, and my operator-valued \(Tb\) theorem~\cite{H:Tb}.
A reader familiar with the proof of either one of them will recognize much of the same general structure here, but in the details there are also substantial departures from the earlier approaches. This section gives an outline of the proof with commentary on the relation of its various parts to the existing arguments.

The proof starts from a ``twisted'' (or ``adapted'') martingale difference decomposition of the operator \(T\). Let \(\mathscr{D}=\bigcup_{k\in\Z}\mathscr{D}_k\) be a system of dyadic cubes in \(\R^N\): each subcollection \(\mathscr{D}_k\) is of the form
\begin{equation*}
  \mathscr{D}_k=\big\{x_k+2^k\big(n+[0,1)^N\big):n\in\Z^N\big\}
\end{equation*}
for some \(x_k\in\R^N\), and each \(Q\in\mathscr{D}_k\) is the exact union of \(2^N\) cubes \(Q'\in\mathscr{D}_{k-1}\). Let \(\Exp_k:=\Exp[\cdot|\sigma(\mathscr{D}_k)]\) be the associated conditional expectations which, because the \(\sigma\)-algebra \(\sigma(\mathscr{D}_k)\) is atomic, admit the explicit representation
\begin{equation*}
  \Exp_k f=\sum_{Q\in\mathscr{D}_k} \frac{1_Q}{\mu(Q)}\int_Q f\ud\mu.
\end{equation*}
If \(\mu(Q)=0\) for some cube, the term corresponding to \(Q\) in the above series may be simply taken to be zero. Note that, following~\cite{NTV:Tb}, the ``geometric'' indexing of the dyadic partitions \(\mathscr{D}_k\) is used, where larger \(k\) refers to larger cubes; this is different from the ``probabilistic'' indexing, where larger \(k\) refers to a finer \(\sigma\)-algebra and hence smaller generating cubes.

Given a para-accretive function \(b\), the \(b\)-twisted conditional expectations and their localized versions, for \(k\in\Z\) and \(Q\in\mathscr{D}_k\), are defined by
\begin{equation*}
  \Exp_k^b f:=b\frac{\Exp_k f}{\Exp_k b},\qquad
  \Exp_Q^b f:=1_Q\Exp_k^b f,
\end{equation*}
and the corresponding twisted martingale differences by
\begin{equation*}
  \D_k^b f:=\Exp_{k-1}^b f-\Exp_k^b f,\qquad
  \D_Q^b f:=1_Q\D_k^b f.
\end{equation*}
(In~\cite{H:Tb}, the adjoints of these operators are used instead, which does not make any essential difference.) By martingale convergence, there holds $\Exp_k^b f\to f$ pointwise a.e. and in $L^p(\mu)$ as $k\to-\infty$.

For any $m\in\Z$, it then follows that 
\begin{equation}\label{eq:twistedUC}
  f=\sum_{k\leq m}\D_k^b f+\Exp_m^b f
   =\sum_{\substack{Q\in\mathscr{D}\\ \ell(Q)\leq 2^m}}\D_Q^b f
      +\sum_{\substack{Q\in\mathscr{D}\\ \ell(Q)= 2^m}}\Exp_Q^b f
\end{equation}
with unconditional convergence in \(L^p(\mu;X)\) under the UMD assumption (see Section~\ref{sec:Haar} for details). So far everything is practically the same as in both \cite{H:Tb} and \cite{NTV:Tb}, with only minor technical differences. If $f$ is compactly supported, then for all $m$ large enough, depending only on the diameter of the support, the second sum on the right contains at most $2^N$ non-zero terms.

As the first departure from \cite{NTV:Tb}, but still quite closely following \cite{H:Tb}, the projections \(\D_Q^b\) and $\Exp_Q^b$ will be further represented in terms of rank-one operators as
\begin{equation}\label{eq:twistedHaar}
  \D_Q^b f=\sum_{u=1}^{2^N-1}b\varphi_{Q,u}^b\pair{\varphi_{Q,u}^b}{f},\qquad
  \Exp_Q^b f=b\varphi_{Q,0}^b\pair{\varphi_{Q,0}^b}{f},
\end{equation}
where quite precise information (established in Section~\ref{sec:Haar}) about the ``Haar'' functions \(\varphi_{Q,u}^b\) will be essential in deriving the required \(L^p\) bounds. There is a qualitative difference between the cancellative functions $\varphi_{Q,u}^b$ with $\int b\varphi_{Q,u}^b\ud\mu=0$, and the non-cancellative $\varphi_{Q,0}^b:=\big(\int_Q b\ud\mu\big)^{-1/2}1_Q$. Recall that the classical \(L^2\)-normalized Haar functions \(h_Q\) associated to a dyadic cube \(Q\) satisfy \(\Norm{h_Q}{1}=\abs{Q}^{1/2}\), \(\Norm{h_Q}{\infty}=\abs{Q}^{-1/2}\), and the equalities remain true up to constants even in the \(b\)-twisted case~\cite{H:Tb}. In the present situation, there is no upper control of the \(L^{\infty}(\mu)\) norm of \(\varphi_{Q,u}^b\) in terms of the measure \(\mu(Q)\), but this can be compensated by the smallness of the \(L^1(\mu)\) norm, so that the following important property still holds:
\begin{equation*}
  \Norm{\varphi_{Q,u}^b}{L^1(\mu)}\Norm{\varphi_{Q,u}^b}{L^{\infty}(\mu)}\lesssim 1.
\end{equation*}

To estimate the operator norm \(\Norm{T}{\bddlin(L^p(\mu;X))}\), a pairing \(\pair{g}{Tf}\) will be considered, where the compactly supported \(f\in L^p(\mu;X)\) and \(g\in L^{p'}(\mu;X^*)\) are expanded by means of \eqref{eq:twistedUC} and \eqref{eq:twistedHaar}, now taking one of the two para-accretive functions \(b_1\) and \(b_2\) from the assumptions of the \(Tb\) theorem in place \(b\):
\begin{equation}\label{eq:matrixT}
  \pair{g}{Tf}
  =\lim_{\epsilon\to 0}\sum_{\substack{Q\in\mathscr{D},R\in\mathscr{D}' \\ \epsilon\leq\ell(Q),\ell(R)\leq 2^m}}\sum_{u,v}
   \pair{g}{\varphi_{R,v}^{b_2}}\pair{\varphi_{R,v}^{b_2}b_2}{T(b_1\varphi_{Q,u}^{b_1})}
    \pair{\varphi_{Q,u}^{b_1}}{f},
\end{equation}
where the summation condition for $u$ is $u=1,\ldots,2^N-1$ for $\ell(Q)<2^m$ and $u=0,1,\ldots,2^N-1$ for $\ell(Q)=2^m$, and similarly for $v$ in terms of $R$. For a fixed $\epsilon>0$, the multiple summation consists of only finitely many non-zero terms, legitimating all the rearrangements that one may like to make in the course of the proof.

Following Nazarov, Treil and Volberg~\cite{NTV:Tb}, the functions \(f\) and \(g\) are expanded in terms of ``Haar'' functions related to two different dyadic systems \(\mathscr{D}\) and \(\mathscr{D}'\). An important aspect of the proof, already in~\cite{NTV:Tb} and even more decisively here, is the fact that parts of series~\eqref{eq:matrixT} cannot be directly controlled for a pair of preassigned dyadic systems, but only on average after taking the expectation over independent random choices of $\mathscr{D}$ and $\mathscr{D}'$---the underlying probability distribution is explained in Section~\ref{sec:random}. In this respect, it is useful to observe that the a priori continuity of $T$ ensures the uniform boundedness (involving the operator norm of $T$) of the truncated series in~\eqref{eq:matrixT}, so that the expectations may be moved in and out of the limit by dominated convergence.

This expansion of \(\pair{g}{Tf}\)---based on two independent multiresolution analyses of the domains of \(f\) and \(g\) (both of which are equal to \(\R^N\))---is essentially different from the one which Figiel introduced for the \(T1\) theorem in \cite{Figiel:90} and I adapted for \(Tb\) in \cite{H:Tb}. In Figiel's approach, a single multiresolution analysis of the product domain \(\R^N\times\R^N\) of \(f\otimes g\) was employed, which would mean that the summation over \(Q\in\mathscr{D}\) and \(R\in\mathscr{D}'\) comes with the restriction to cubes of the same size, \(\ell(Q)=\ell(R)\), while the summation range of \((u,v)\) is \(\{0,1,\ldots,2^N-1\}^2\setminus\{(0,0)\}\) on all the length-scales.

To simplify notation, the summations over the bounded ranges of $u$ and $v$ will mostly be suppressed, and I write
\begin{equation*}
  \varphi_Q:=\varphi^{b_1}_{Q,u},\qquad
  \psi_R:=\varphi^{b_2}_{R,v},\qquad
  T_{RQ}:=\pair{\psi_R b_2}{T(b_1\varphi_Q)}
\end{equation*}
for short.

As in \cite{NTV:Tb}, the analysis of the series in \eqref{eq:matrixT} will be divided into several cases depending on the relative size and position of the cubes \(Q,R\in\mathscr{D}\). By symmetry, it suffices to consider the half of the series with \(\ell(Q)\leq\ell(R)\). Modulo the extraction of appropriate \emph{paraproduct} operators (defined and treated in Section~\ref{sec:para}), the coefficients \(T_{RQ}\) exhibit good off-diagonal decay when the cubes \(Q\) and \(R\) move apart in the ``phase space'', where the coordinates are the spatial position and the size of a cube. Thanks to this decay, it is possible (in Section~\ref{sec:separated}) to separately treat countably many subseries of \eqref{eq:matrixT}, a typical one consisting of cubes such that \(\dist(Q,R)\sim 2^j\ell(R)\) and \(\ell(Q)=2^{-n}\ell(R)\), and the decay will provide estimates which allows to make the final summation over \(j,n=0,1,2,\ldots\) with absolute convergence. A further separate treatment is made for cubes of which one contains the other (deeply) in its interior (Section~\ref{sec:contained}), and yet another for cubes of essentially the same size and very close or even touching each other (Section~\ref{sec:close}).

In each case, the subseries in question (consisting of \(R\in\mathscr{D}'\) and \(Q\) from some subcollection \(\mathscr{D}(R)\subset\mathscr{D}\), depending on \(R\)) is first estimated by the following basic randomization trick:

\begin{lemma}
The following inequality holds:
\begin{equation}\label{eq:firstBy}\begin{split}
  &\Babs{\sum_{R\in\mathscr{D}'}\pair{g}{\psi_R}\sum_{Q\in\mathscr{D}(R)}T_{RQ}\pair{\varphi_Q}{f}} \\
  &\lesssim\Norm{g}{L^{p'}(\mu;X^*)}\BNorm{\sum_{k\in\Z}\radem_k
  \sum_{R\in\mathscr{D}_k'}\psi_R(x)\sum_{Q\in\mathscr{D}(R)}T_{RQ}\pair{\varphi_Q}{f}}{L^p(\prob\otimes\mu;X)},
\end{split}\end{equation}
\end{lemma}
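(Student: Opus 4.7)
The plan is to combine a biorthogonality identity for the twisted Haar system with the orthogonality $\Exp[\radem_k\radem_l]=\delta_{k,l}$ of Rademacher variables, applying H\"older's inequality on the product space and invoking UMD only at the very end. Write
$$c_R:=\sum_{Q\in\mathscr{D}(R)}T_{RQ}\pair{\varphi_Q}{f}\in X,\qquad F_k(x):=\sum_{R\in\mathscr{D}_k'}\psi_R(x)\,c_R,$$
so that the right-hand side of~\eqref{eq:firstBy} reads $\Norm{g}{L^{p'}(\mu;X^*)}\cdot\bNorm{\sum_k\radem_k F_k}{L^p(\prob\otimes\mu;X)}$.

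First I would introduce, for each scale $k$, the scale-$k$ twisted projection of $g$, defined as $P_kg:=\D^{b_2}_kg$ for $k<m$, modified at the top level so that $P_mg$ also absorbs the non-cancellative $v=0$ contribution $\Exp^{b_2}_mg$. The biorthogonality inherent to~\eqref{eq:twistedHaar}, i.e.\ $\pair{\varphi^{b_2}_{R,v}}{b_2\varphi^{b_2}_{R',v'}}=\delta_{(R,v),(R',v')}$, then gives $\pair{g}{\psi_R}=\pair{P_kg}{\psi_R}$ for every $R\in\mathscr{D}_k'$. Combining this with the pairwise disjoint supports of $\{\psi_R\}_{R\in\mathscr{D}_k'}$ yields the scale-wise identity
$$\sum_{R\in\mathscr{D}_k'}\pair{g}{\psi_R}c_R=\int\pair{P_kg(x)}{F_k(x)}\ud\mu(x).$$

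Next I would invoke $\Exp[\radem_k\radem_l]=\delta_{k,l}$ to rewrite
$$\sum_k\int\pair{P_kg}{F_k}\ud\mu=\Exp\int\Bpair{\sum_k\radem_k P_kg(x)}{\sum_l\radem_l F_l(x)}\ud\mu(x),$$
then apply H\"older's inequality in the pairing $L^{p'}(\prob\otimes\mu;X^*)\times L^p(\prob\otimes\mu;X)$. It then suffices to bound $\bNorm{\sum_k\radem_k P_kg}{L^{p'}(\prob\otimes\mu;X^*)}\lesssim\Norm{g}{L^{p'}(\mu;X^*)}$. Since $X^*$ inherits UMD from $X$, the unconditional convergence of the twisted martingale expansion~\eqref{eq:twistedUC} in $L^{p'}(\mu;X^*)$ gives $\bNorm{\sum_k\epsilon_k P_kg}{L^{p'}(\mu;X^*)}\lesssim\Norm{g}{L^{p'}(\mu;X^*)}$ uniformly in signs $\epsilon_k\in\{\pm 1\}$; averaging in the signs and then passing from the $L^1$ to the $L^{p'}$ Rademacher moments via Kahane--Khintchine in the Banach space $L^{p'}(\mu;X^*)$ furnishes the claimed bound.

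The main obstacle I anticipate is the careful bookkeeping at the top scale $k=m$, where both cancellative ($v\geq 1$) and non-cancellative ($v=0$) twisted Haar terms must be absorbed into $P_m$ for the biorthogonality identity to hold exactly; once this detail is handled, the rest is essentially formal, depending only on Rademacher orthogonality, H\"older's inequality, and UMD invoked as a black box via the already-established unconditional convergence of~\eqref{eq:twistedUC}.
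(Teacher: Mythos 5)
Your proof is correct and follows essentially the same route as the paper: introduce Rademacher signs via $\Exp[\radem_k\radem_l]=\delta_{kl}$, apply H\"older in the duality $L^{p'}(\prob\otimes\mu;X^*)\times L^p(\prob\otimes\mu;X)$, and bound the $g$-factor by the unconditionality of the twisted martingale expansion (Proposition~\ref{prop:UC}). Where the paper inserts $\int b_2\psi_R^2\ud\mu=1$ and works with cube-indexed signs (noting that cube- and scale-indexed signs coincide pointwise), you pass through the scale-$k$ projections $P_k=\D_k^{b_2}$ via biorthogonality---a purely cosmetic reorganization; note also that the final Kahane--Khintchine step is superfluous, since the randomized form of Proposition~\ref{prop:UC} already delivers the $L^{p'}(\prob\otimes\mu;X^*)$ bound directly.
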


\begin{proof}
The construction of the Haar functions gives the identity $\int b_2\psi_R^2\ud\mu=1$. Using this in the first step, one estimates
\begin{equation*}\begin{split}
  LHS\eqref{eq:firstBy}
  &=\Big|\iint_{\Omega\times\R^N}\sum_{S\in\mathscr{D}'}\radem_S \pair{g}{\psi_S}(b_2\psi_S)(x) \\
  &\qquad
          \times\sum_{R\in\mathscr{D}'}\radem_R\psi_R(x)\sum_{Q\in\mathscr{D}(R)}T_{RQ}\pair{\varphi_Q}{f}
    \ud\prob(\radem)\ud\mu(x)\Big| \\
  &\leq\BNorm{\sum_{S\in\mathscr{D}'}\radem_S\pair{g}{\psi_S}b_2\psi_S}{L^{p'}(\prob\otimes\mu;X^*)} \\
  &\qquad
       \times\BNorm{\sum_{R\in\mathscr{D}'}\radem_R\psi_R(x)\sum_{Q\in\mathscr{D}(R)}T_{RQ}\pair{\varphi_Q}{f}}{
     L^p(\prob\otimes\mu;X)} 
  \lesssim RHS\eqref{eq:firstBy},
\end{split}\end{equation*}
where the final estimate for the first factor is an application of the unconditionality of \eqref{eq:twistedUC} in \(L^{p'}(\mu;X^*)\); in the second factor, the basic observation is made that, for a fixed \(x\in\R^N\), the summation over \(R\in\mathscr{D}'=\bigcup_{k\in\Z}\mathscr{D}_k'\) only contains one non-zero \(\psi_R(x)\) for each \(\mathscr{D}_k'\), and hence it does not matter if the random signs are indexed by cubes or the size of the cubes. (This observation will be repeatedly applied without further notice.)
\end{proof}

The collections \(\mathscr{D}(R)\) are always of such a form that \(R\in\mathscr{D}_k'\) implies \(\mathscr{D}(R)\subset\mathscr{D}_{k-n}\) for some \(n\in\N\), independent of \(R\). Also, when \(Q\in\mathscr{D}(R)\), the cube \(R\) will be contained in a dyadic ancestor \(Q^{(n+a)}\) of \(Q\), where $a=a(j)$ grows linearly in $j$ (recall that we are considering cubes with $\dist(Q,R)\sim 2^j\ell(R)$), but for a technical reason with a slope slightly bigger than $1$.
The quantity in the \(L^p(\mu;X)\) norm to be estimated is hence of the form
\begin{equation*}
  \sum_{k\in\Z}\radem_k\sum_{S\in\mathscr{D}_{k+a}}
    1_S(x)\int_{S} K_S(x,y)\D_{k-n}^{b_1}f(y)\ud\mu(y)
  =:\sum_{k\in\Z}\radem_k T^{(k)}\D_{k-n}^{b_1}f(x),
\end{equation*}
and it remains to prove that
\begin{equation}\label{eq:genericToProve}
  \Exp\BNorm{\sum_{k\in\Z}\radem_k T^{(k)}\D_{k-n}^{b_1}f}{L^p(\mu;X)}
  \lesssim 2^{-(n+j)\sigma}\Exp\BNorm{\sum_{k\in\Z}\radem_k \D_{k-n}^{b_1}f}{L^p(\mu;X)},
\end{equation}
since this is bounded by \(2^{-(n+j)\sigma}\Norm{f}{L^p(\mu;X)}\) due to the unconditionality, and the exponential factor allows the summation over \(n,j\in\N\) to complete the estimate of the full series \eqref{eq:matrixT}.

The integral kernels \(K_S\) will typically satisfy bounds of the type \(\Norm{K_S}{\infty}\lesssim 2^{-(n+j)\sigma}\ell(S)^{-d}\), with similar but somewhat more complicated form when \(j\in\{0,1\}\), i.e., when the cubes \(Q\) are close to or contained inside \(R\). Getting these estimates requires the fine properties of the ``Haar'' functions \(\varphi_Q\) and \(\psi_R\). Recalling that \(\mu(S)\lesssim \ell(S)^d\), it is seen that \(2^{(n+j)\sigma}T^{(k)}F(x)\) is a weighted average of \(F\) in a neighbourhood of \(x\).

In the classical Calder\'on--Zygmund theory, such averaging operators were usually controlled by the Hardy--Littlewood maximal operator \(M\), and the estimate \eqref{eq:genericToProve} could be deduced from the Fefferman--Stein square-function estimate for \(M\). The lack of a comparable vector-valued theory of a maximal function has necessitated the invention of alternative tools to circumvent the maximal function arguments in the estimation of integral operators.

A powerful substitute was provided by Bourgain's square function estimate \cite{Bourgain:86} for the translations \(\tau_y:h\mapsto h(\cdot+y)\), which can be viewed as the basic building blocks of integral operators via the formula
\begin{equation}\label{eq:intopTrans}\begin{split}
  &\int_{S}K_S(x,y)f(y)\ud\mu(y) \\
  &=\int_{B(0,C)} K_S(x,x+\ell(S)u)(\tau_{\ell(S)u}f)(x)\ud\mu(x+\ell(S)u).
\end{split}\end{equation}
(Note that this simlifies for the Lebesgue measure \(\ud\mu(y)=\ud y\), since then \(\ud\mu(x+\ell(S)u)=\ell(S)^N\ud u\), so that the integrations on \(B(0,C)\) can be carried out with respect to a fixed reference measure.)
Bourgain showed that
\begin{equation}\label{eq:Bourgain}
  \BNorm{\sum_{j\in\Z}\radem_j\tau_{2^j y}f_j}{L^p(\Omega\times\R^N;X)}
  \lesssim\log(2+\abs{y})\BNorm{\sum_{j\in\Z}\radem_j f_j}{L^p(\Omega\times\R^N;X)}
\end{equation}
(where \(\R^N\) is equipped with the Lebesgue measure) assuming that the Fourier transforms of the \(f_j\) are restricted by the condition \(\supp\hat{f}_j\subseteq B(0,2^{-j})\)---a condition which is naturally satisfied when these functions arise from a Littlewood--Paley-type decomposition. Figiel~\cite{Figiel:88} gave a variant of this result where it is required instead that \(f_j=\Exp_j f_j\) and \(y\in\Z^N\), which would be closer to the present martingale setting. (The original formulation in \cite{Figiel:88} in terms of the Haar functions is slightly different but the equivalence is immediate.)

All the known Banach space -valued \(T1\) and \(Tb\) theorems so far have been based on one of these two remarkable results: Figiel's \(T1\)~\cite{Figiel:90} and my \(Tb\)~\cite{H:Tb} on the martingale version, and the \(T1\) theorem of mine and Weis \cite{HW:T1} on the Fourier-analytic one. However, a moment's thought reveals that there is no hope of extending the translation techniques to the non-homogeneous situation. Since only an upper control of the measure of balls is assumed, a small translation of just a single function (not to mention a sequence of functions as above) may result in its support being moved from a set of negligible measure to one with a large \(\mu\)-mass, with uncontrollable effect on the \(L^p\) norm.

To overcome this problem, I use a different trick based on a two-sided inequality for so-called tangent martingale difference sequences due to McConnell~\cite{McConnell}. This is a stochastic decoupling estimate, explained in detail in Section~\ref{sec:trick}, which McConnell originally employed for the construction of It\^o-type integrals of UMD-valued random processes. Thus the trick itself is not new, but it seems not to have been exploited in the context of Calder\'on--Zygmund theory before. Although it still avoids maximal functions, this method is somewhat closer in spirit to the classical maximal function techniques than the translation inequalities~\eqref{eq:Bourgain}, which have been the most refined tools in vector-valued harmonic analysis for the past twenty years. I expect this trick to find further applications besides the results of the present paper. Indeed, after originally writing this prophesy, I already discovered one such application (new even for scalar-valued functions) in the context of pseudo-localization of singular integral operators \cite{H:pseudoloc}, and there should be more.

This concludes the historical--strategic overview, and I now turn to the details.


\section{A Carleson embedding theorem}

This section provides a ``Carleson-type'' embedding theorems, which will play a r\^ole both in establishing the unconditionality of the twisted martingale difference decomposition~\eqref{eq:twistedUC} in the next section, and later on in handling the paraproduct parts of the operator~\(T\). The result will be formulated in an abstract filtered space setting, since the special case of actual interest involving \(\R^N\) with its systems of dyadic cubes would not provide any simplification and could at most distract the attention from the measure-theoretic core of the arguments.

Let \((E,\mathscr{M},\mu)\) be a \(\sigma\)-finite measure space. Let \(\vec{\mathscr{F}}=(\mathscr{F}_j)_{j\in\Z}\) be a decreasing sequence of sub-\(\sigma\)-algebras of \(\mathscr{M}\), i.e., \(\mathscr{F}_{j-1}\supseteq\mathscr{F}_j\), such that each \((E,\mathscr{F}_j,\mu)\) is also \(\sigma\)-finite. The short hand notation \(\Exp_j:=\Exp[\cdot|\mathscr{F}_j]\) will be used for the corresponding conditional expectations.
Let \(\mathscr{F}_j^+\) consist of the sets \(A\in\mathscr{F}_j\) of finite positive measure.

Given a sequence of functions \(\theta_j:E\to X_1\), such that \(1_A\theta_j\in L^1(E;X_1)\) for all \(A\subseteq E\) of finite measure, the following Carleson norms were introduced by McIntosh, Portal, and the author \cite{HMP} in a special case:
\begin{equation*}\begin{split}
  \Norm{\{\theta_j\}_{j\in\Z}}{\Car^p(\vec{\mathscr{F}};X_1)}
  &:=\sup_{k\in\Z}\BNorm{\Big(\Exp_k
    \BNorm{\sum_{j\leq k}\radem_j \theta_j}{L^p(\Omega;X_1)}^p\Big)^{1/p}}{L^{\infty}(E)} \\
  &=\sup_{k\in\Z}\sup_{A\in\mathscr{F}_k^+}\mu(A)^{-1/p}\BNorm{1_A\sum_{j\leq k}\radem_j \theta_j}{L^p(\Omega\times E;X_1)}.
\end{split}\end{equation*}

\begin{proposition}\label{prop:JN}
If \(\theta_j=\Exp_j \theta_j\) for all \(j\in\Z\), then the Carleson norms
\begin{equation*}
  \Norm{\{\theta_j\}_{j\in\Z}}{\Car^p(\vec{\mathscr{F}};X_1)}
\end{equation*}
are equivalent for all \(p\in[1,\infty)\).
\end{proposition}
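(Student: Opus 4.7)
The easy direction $\|\{\theta_j\}\|_{\Car^p} \lesssim_{p,q} \|\{\theta_j\}\|_{\Car^q}$ for $p\leq q$ is immediate from Jensen: after a Fubini rewriting, $\mu(A)^{-1/p}\|1_A S_k\|_{L^p(\prob\otimes\mu;X_1)}$ is the $L^p$-norm of $S_k:=\sum_{j\leq k}\radem_j\theta_j$ on the probability space $(\Omega\times A,\prob\otimes\mu(A)^{-1}\mu|_A)$, and these norms are non-decreasing in $p$. For the reverse direction, I would use a John--Nirenberg / good-$\lambda$ argument.

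Fix $k_0\in\Z$, $A_0\in\mathscr{F}_{k_0}^+$, write $C:=\|\{\theta_j\}\|_{\Car^p}$, and set $f_k(x):=\|S_k(x,\cdot)\|_{L^p(\prob;X_1)}$. The pointwise-in-$x$ Kahane--Khintchine inequality reduces the target bound $\mu(A_0)^{-1/q}\|1_{A_0}S_{k_0}\|_{L^q(\prob\otimes\mu;X_1)}\lesssim C$ to the scalar statement $\mu(A_0)^{-1}\int_{A_0}f_{k_0}^q\ud\mu\lesssim_{p,q}C^q$, given the hypothesis $\mu(A)^{-1}\int_A f_k^p\ud\mu\leq C^p$ for every $k$ and every $A\in\mathscr{F}_k^+$. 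For $m\leq k_0-1$ split $S_{k_0}=S_{k_0}^m+S_m$ with $S_{k_0}^m:=\sum_{j=m+1}^{k_0}\radem_j\theta_j$; since every $\theta_j$ in $S_{k_0}^m$ satisfies $j\geq m+1$ and is therefore $\mathscr{F}_j\subseteq\mathscr{F}_{m+1}$-measurable, so is $x\mapsto\|S_{k_0}^m(x,\cdot)\|_{L^p(\prob;X_1)}$. A Jensen step conditioning on $(\radem_j)_{j>m}$ and using $\Exp S_m=0$ further yields the pointwise monotonicity $\|S_{k_0}^m\|_{L^p(\prob;X_1)}\leq f_{k_0}$.

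The heart of the argument is then the stopping time
\[
  \tau(x):=\sup\big\{m\leq k_0-1:\|S_{k_0}^m(x,\cdot)\|_{L^p(\prob;X_1)}>\lambda\big\},
\]
with $\tau=-\infty$ on an empty set. By construction $\{\tau=m\}\in\mathscr{F}_{m+1}$ and $\{\tau\neq-\infty\}=\{f_{k_0}>\lambda\}$ up to a null set; on $\{\tau=m\}$ the definition forces $\|S_{k_0}^{m+1}\|_{L^p(\prob)}\leq\lambda$, and the triangle inequality gives $f_{k_0}\leq\lambda+f_{m+1}$, so $\{f_{k_0}>2\lambda\}\cap\{\tau=m\}\subseteq\{f_{m+1}>\lambda\}\cap\{\tau=m\}$. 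Applying Chebyshev with the $\Car^p$ hypothesis at scale $m+1$ on the set $\{\tau=m\}\cap A_0\in\mathscr{F}_{m+1}^+$ (note $A_0\in\mathscr{F}_{k_0}\subseteq\mathscr{F}_{m+1}$) and summing over $m$ yields the good-$\lambda$ estimate $\mu(\{f_{k_0}>2\lambda\}\cap A_0)\leq(C/\lambda)^p\mu(\{f_{k_0}>\lambda\}\cap A_0)$. Iterating at $\lambda_n:=2^n\cdot 2C$ gives super-geometric decay, and the layer-cake formula then produces the required $\int_{A_0}f_{k_0}^q\ud\mu\lesssim_{p,q}C^q\mu(A_0)$. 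I expect the main technical subtlety to be the calibration of the stopping time so that it is measurable with respect to a $\sigma$-algebra fine enough to invoke the Carleson hypothesis, yet its level sets still provide honest pointwise control of $f_{k_0}$ from above through the triangle inequality; the hypothesis $\theta_j=\Exp_j\theta_j$ together with stopping on the upper partial sums $S_{k_0}^m$ is precisely what reconciles these two requirements.
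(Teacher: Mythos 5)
Your argument is correct in its essentials, but it takes a genuinely different route from the paper, so a comparison is in order.

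The paper does not run a stopping-time argument in the base space. Instead it packages the randomness and the filtration together: it introduces the filtration $\mathscr{G}_k=\sigma(\mathscr{E}_k,\mathscr{F}_k)$ on $\Omega\times E$ with $\mathscr{E}_k=\sigma(\radem_j;j\geq k)$, sets $\theta:=\sum_j\radem_j\theta_j$, and computes that $\tilde{\Exp}_k\|\theta-\tilde{\Exp}_{k+1}\theta\|_{X_1}^p=\Exp_k\|\sum_{j\leq k}\radem_j\theta_j\|_{L^p(\Omega;X_1)}^p$, so that $\Norm{\{\theta_j\}}{\Car^p}$ is \emph{identically} the martingale $\BMO^p(\vec{\mathscr{G}};X_1)$ norm of $\theta$. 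The equivalence of the $\Car^p$ norms is then read off from the known equivalence of martingale $\BMO^p$ norms. (The paper's remark after the proposition points out a pleasant byproduct of this identification: Kahane's inequality is the one-point case of martingale John--Nirenberg.) Your route instead reduces to a scalar statement about $f_k(x)=\|S_k(x,\cdot)\|_{L^p(\prob;X_1)}$ via Kahane, and then re-derives the self-improvement directly with the stopping time $\tau$ on $E$ and a good-$\lambda$ iteration; this is essentially an in-place reproof of the John--Nirenberg mechanism rather than a reduction to the theorem. Both work. The paper's version is shorter given the known result and makes the structural identity visible; yours is self-contained and avoids introducing the auxiliary filtration on $\Omega\times E$.

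Two small points you should make explicit. First, like the paper, you should declare at the outset that it suffices to treat finitely non-zero sequences $\{\theta_j\}$ (by approximation in the final $L^p$ estimates); without this, the inclusion $\{f_{k_0}>2\lambda\}\subseteq\{\tau\neq-\infty\}$, which you use tacitly when summing the good-$\lambda$ estimate over $m$, is not immediate. Second, the ``easy direction'' you dispatch with Jensen is actually the direction $\Car^p\leq\Car^q$ for $p\leq q$ with constant $1$ (not just $\lesssim_{p,q}$), since these are $L^p$ norms over probability spaces; it is the reverse inequality $\Car^q\lesssim\Car^p$ for $q>p$ that needs the stopping time, and your write-up handles that correctly.
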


\begin{proof}
This could be proven in a similar way as the well-known equivalence of the different (martingale) \(\BMO^p\) norms. Instead, I will show how to reduce the claim to the mentioned result. By approximation, it suffices to treat finitely non-zero sequences \(\theta_j\) in order to avoid problems of convergence in the following expressions.

Consider the filtration $\vec{\mathscr{G}}=(\mathscr{G}_k)_{k\in\Z}$  on \(\Omega\times E\), defined by $\mathscr{G}_k:=\sigma(\mathscr{E}_k,\mathscr{F}_k)$, where \(\mathscr{E}_k:=\sigma(\radem_j;j\geq k)\). Let $\tilde{\Exp}_k:=\Exp[\cdot|\mathscr{G}_k]$. The space $\BMO^p(\mathscr{G}_k;X_1)$ consists of all $\theta:E\times\Omega\to X_1$, integrable over sets of finite measure, such that
\begin{equation*}
\begin{split}
  \Norm{\theta}{\BMO^p}
  &:=\sup_{k\in\Z}\bNorm{(\tilde{\Exp}_{k}\norm{\theta-\tilde{\Exp}_{k+1}\theta}{X_1}^p)^{1/p}}{\infty} \\
  &=\sup_{k\in\Z}\sup_{\tilde{A}\in\mathscr{G}_k^+}
      \tilde\mu(\tilde{A})^{-1/p}\Norm{1_{\tilde{A}}[\theta-\tilde{\Exp}_{k+1}\theta]}{p},\qquad
      \tilde\mu:=\mu\times\prob,
\end{split}
\end{equation*}
is finite.

Now consider the particular function \(\theta:=\sum_{j\in\Z}\radem_j \theta_j\).  Then
\begin{equation*}
  \tilde{\Exp}_k\theta=\sum_{j\geq k}\radem_j \theta_j,\qquad
  \theta-\tilde{\Exp}_k\theta=\sum_{j< k}\radem_j \theta_j,
\end{equation*}
and, by the tower rule for conditional expectations with respect to $\mathscr{G}_k\subseteq\sigma(\mathscr{E}_k,\mathscr{M})$,
\begin{equation*}
  \tilde{\Exp}_k\norm{\theta-\tilde{\Exp}_{k+1}\theta}{X_1}^p
  =\tilde{\Exp}_k\Exp\Big[\Bnorm{
     \sum_{j\leq k}\radem_j \theta_j}{X_1}^p\Big|\sigma(\mathscr{E}_k,\mathscr{M})\Big].
\end{equation*}
The conditional expectation inside is computed by keeping the variables \(\radem_k\) and \(x\in E\) fixed and taking the average over all \(\radem_j\) for \(j<k\). Writing \(\radem_j'\) for another set of independent random signs and \(\Exp'\) for the corresponding expectation, this quantity can be written as
\begin{equation*}
  \Exp'\Bnorm{\sum_{j<k}\radem_j' \theta_j+\radem_k \theta_k}{X_1}^p
  =  \Exp'\Bnorm{\sum_{j\leq k}\radem_j' \theta_j}{X_1}^p,
\end{equation*}
where the equality follows from the observation that the first expectation is actually independent of the sign \(\radem_k\).
Hence
\begin{equation*}
  \tilde{\Exp}_k\norm{\theta-\tilde{\Exp}_{k+1}\theta}{X_1}^p
  =\Exp_k\Exp'\Bnorm{\sum_{j\leq k}\radem_j' \theta_j}{X_1}^p
  =\Exp_k\BNorm{\sum_{j\leq k}\radem_j \theta_j}{L^p(\Omega;X_1)}^p.
\end{equation*}
Applying \(\sup_{k\in\Z}\Norm{(\cdot)^{1/p}}{L^{\infty}(E)}\) to the left side above, one gets the martingale \(\BMO^p\) norm of \(\theta\), while the same functional of the right side yields \(\Norm{\{\theta_j\}_{j\in\Z}}{\Car^p(\vec{\mathscr{F}};X_1)}\). The equivalence of the \(\Car^p\) norms thus follows from the equivalence of the (martingale) \(\BMO^p\) norms, which is the well-known John--Niren\-berg inequality.
\end{proof}

\begin{remark}
With a one-point measure space \(E=\{e\}\) and \(\theta_j=\xi_j\in X_1\), it follows that \(\Norm{\{\theta_j\}_{j\in\Z}}{\Car^p}=\Norm{\sum\radem_j\xi_j}{L^p(\Omega;X_1)}\). Hence the previous proof shows that Kahane's inequality (the equivalence of the different \(L^p\) norms of such random sums; \cite{DJT}, Theorem~11.1) is a consequence of the martingale John--Nirenberg inequality. This is probably known to experts, but I did not encounter this observation before.
\end{remark}

Suppose that there are three Banach spaces \(X_1,X_2,X_3\) with \(X_2\subseteq\bddlin(X_1,X_3)\); the point is here that \(X_2\) may be required to have some properties which the full operator space \(\bddlin(X_1,X_3)\) would almost never satisfy.
Given a sequence \(\{\theta_j\}_{j\in\Z}\in\Car^1(\vec{\mathscr{F}};X_2)\), the ``paraproduct type'' operator 
\begin{equation}\label{eq:Pf}
  Pf:=\sum_{j\in\Z}\radem_j \theta_j\Exp_j f,
\end{equation}
acting on \(f\in L^p(E;X_1)\), is of interest.

There are two closely related results which guarantee the boundedness of \(P\) from \(L^p(E;X_1)\) to \(L^p(\Omega\times E;X_3)\):

\begin{theorem}\label{thm:Pf}
Let \(X_3\) be a UMD space, and \(1<p<\infty\). Let \(\{\theta_j\}_{j\in\Z}\) be a sequence such that \(\theta_j=\Exp_j \theta_j\) for all \(j\in\Z\). Then \(P\) defined in \eqref{eq:Pf} satisfies
\begin{equation*}
  \Norm{P f}{L^p(\Omega\times E;X_3)}
  \lesssim\Norm{\{\theta_j\}_{j\in\Z}}{\Car^1(\vec{\mathscr{F}};X_2)}\Norm{f}{L^p(E;X_1)}.
\end{equation*}
\end{theorem}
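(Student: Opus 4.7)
The strategy is to reinterpret $P$ as a martingale-transform paraproduct on the enlarged space $\Omega\times E$ and then to prove the resulting martingale BMO--paraproduct bound by duality and a stopping time decomposition.

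By Proposition~\ref{prop:JN}, the $\Car^1$ and $\Car^p$ norms of $\{\theta_j\}$ are equivalent, so I set $M:=\Norm{\{\theta_j\}}{\Car^p}$ and work with the $\Car^p$-version of the hypothesis. Following the construction already used in the proof of Proposition~\ref{prop:JN}, introduce the filtration $\mathscr{G}_k:=\sigma(\mathscr{E}_k,\mathscr{F}_k)$ on $\Omega\times E$ and set $\theta(\radem,x):=\sum_j\radem_j\theta_j(x)$; that proof has already shown that $\theta$ is a $\mathscr{G}$-martingale whose differences are $\tilde d_j\theta:=\tilde\Exp_j\theta-\tilde\Exp_{j+1}\theta=\radem_j\theta_j$, and whose $\BMO^p(\mathscr{G};X_2)$ norm equals $M$. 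Viewing $f\in L^p(E;X_1)$ as an $\radem$-independent function on $\Omega\times E$ gives $\tilde\Exp_j f=\Exp_j f$, so that
\[
  Pf=\sum_j\tilde d_j\theta\cdot\tilde\Exp_j f.
\]
Each summand is $\mathscr{G}_j$-measurable, and since $\radem_j$ is independent of $\mathscr{G}_{j+1}$ with mean zero while $\theta_j\cdot\Exp_j f$ does not depend on the Rademacher variables, one checks $\tilde\Exp_{j+1}[\tilde d_j\theta\cdot\tilde\Exp_j f]=0$. Hence the sum is a martingale transform, and by UMD of $X_3$ the desired inequality is equivalent, up to sign-randomization, to a standard martingale BMO--paraproduct bound.

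To establish this bound, I would dualize against $g$ in the unit ball of $L^{p'}(\tilde\mu;X_3^*)$. Using the martingale-difference orthogonality,
\[
  \pair{g}{Pf}=\sum_j\int_{\Omega\times E}\tilde d_j\theta\cdot\tilde\Exp_j f\cdot\tilde d_j g\,\ud\tilde\mu,
\]
and then perform a Calder\'on--Zygmund-type stopping-time decomposition of $E$ at level $\lambda\sim\Norm{f}{L^p(\mu;X_1)}$ using the scalar maximal function $M_f(x):=\sup_j\Norm{\Exp_j f(x)}{X_1}$. By Doob's inequality applied to the scalar submartingale $\Norm{\Exp_j f}{X_1}\le\Exp_j\Norm{f}{X_1}$, this $M_f$ is bounded on $L^p(\mu)$ with no assumption on $X_1$. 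On the good region $\{M_f\le\lambda\}$ the multipliers $\tilde\Exp_j f$ are uniformly bounded, and the $L^\infty$-local John--Nirenberg property of $\theta$ built into Proposition~\ref{prop:JN} absorbs the remaining sum; the bad region is handled by the weak $(p,p)$ bound for $M_f$ together with UMD of $X_3^*$.

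The main obstacle is this final step: converting a $\BMO$/Carleson hypothesis on $\theta$ into an $L^p$-bound for the paraproduct, while staying within the stated assumptions where no $X_1$- or $X_3$-valued maximal operator is available a priori. Applying Doob only to the scalar norm $\Norm{f}{X_1}$ keeps the $f$-side inside the scalar theory, the UMD property of $X_3$ supplies the martingale square-function/Burkholder tools on the $\theta$-side, and the John--Nirenberg improvement embedded in Proposition~\ref{prop:JN} is what allows the $\Car^p$-local estimates that arise on each stopping region to be deployed even though only the $\Car^1$ hypothesis is assumed.
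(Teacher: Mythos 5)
Your reinterpretation of $P$ as a $\mathscr{G}$-martingale transform on $\Omega\times E$ and the trilinear form $\pair{g}{Pf}=\sum_j\int\tilde d_j\theta\cdot\tilde\Exp_j f\cdot\tilde d_j g\,\ud\tilde\mu$ are correct, and the reduction to $\Car^p$ via Proposition~\ref{prop:JN}, as well as the $L^p$-boundedness of $M_f$ via Doob on the scalar submartingale, are fine. But the heart of the argument — what happens after the stopping-time decomposition — has a genuine gap. On the good region you propose to treat the $\tilde\Exp_j f$ as ``uniformly bounded multipliers'' and absorb them into the Carleson bound. These multipliers depend on $j$, and pulling a $j$-dependent $X_1$-valued factor out of the random sum $\sum_j\radem_j\theta_j(\tilde\Exp_j f)$ is precisely what would require Rademacher-boundedness of the unit ball of $X_2$ in $\bddlin(X_1,X_3)$ — the hypothesis the paper deliberately avoids in Theorem~\ref{thm:Pf} (it appears only in Theorem~\ref{thm:PfRMF}). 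In the paper's $L^\infty\to\BMO$ estimate, the factoring works because the fixed function $f$ appears with no $j$-dependence, so $\sum_j\radem_j\theta_j f=(\sum_j\radem_j\theta_j)f$ pointwise; that trick does not extend to $\tilde\Exp_j f$.

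Beyond this, your sketch omits two ingredients that the paper's proof cannot do without. First, Bourgain's vector-valued Stein inequality $\norm{\sum_j\radem_j\Exp_j h_j}{p}\lesssim\norm{\sum_j\radem_j h_j}{p}$, which is where UMD of $X_3$ actually enters, is used both on the atoms in the $H^1\to L^1$ estimate and in the $L^\infty\to\BMO$ estimate to push the indicator of the stopping set inside the conditional expectations; your Burkholder/square-function appeal to UMD of $X_3^*$ on the ``bad region'' is not a substitute. Second, because the measure is non-doubling, the martingale Hardy space relevant here is Herz's $H^1={}^rL^1+AC$, and the $AC$ component (absolutely convergent martingales) must be estimated separately — a Calder\'on--Zygmund stopping-time decomposition produces the atomic piece, but it does not by itself control the $AC$ piece, and the paper devotes a separate lemma to it. Finally, the invocation of an ``$L^\infty$-local John--Nirenberg property built into Proposition~\ref{prop:JN}'' overreads that proposition: it provides the equivalence of $\Car^p$ norms, not a local exponential or $L^\infty$ estimate that would absorb anything. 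As written the sketch does not close, and the missing pieces are exactly the ones the paper's $H^1/\BMO$-interpolation proof supplies.
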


\begin{theorem}\label{thm:PfRMF}
Let \(X_1\) be an RMF space, \(1<p<\infty\), and \(\eta>0\). Let the unit-ball \(\bar{B}_{X_2}\) of \(X_2\) be a Rademacher-bounded subset of \(\bddlin(X_1,X_3)\).  Then \(P\) defined in \eqref{eq:Pf} satisfies
\begin{equation*}
  \Norm{P f}{L^p(\Omega\times E;X_3)}
  \lesssim\Norm{\{\norm{\theta_j(\cdot)}{X_2}\}_{j\in\Z}}{\Car^{p+\eta}(\vec{\mathscr{F}})}\Norm{f}{L^p(E;X_1)}.
\end{equation*}
\end{theorem}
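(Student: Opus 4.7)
The plan is to combine the $R$-boundedness hypothesis, the Rademacher maximal function, and a weighted Carleson-type embedding. For the first step, I would reduce pointwise to a scalar weighted inequality. Fix $x\in E$ and factor $\theta_j(x)=\Norm{\theta_j(x)}{X_2}\hat\theta_j(x)$ with $\hat\theta_j(x)\in\bar{B}_{X_2}$. The $R$-boundedness hypothesis on $\bar{B}_{X_2}$, applied to the operators $\hat\theta_j(x)\in\bddlin(X_1,X_3)$ acting on the vectors $\Norm{\theta_j(x)}{X_2}\Exp_j f(x)\in X_1$, gives
\begin{equation*}
  \BNorm{\sum_j\radem_j\theta_j(x)\Exp_j f(x)}{L^p(\Omega;X_3)}
  \lesssim\BNorm{\sum_j\radem_j\Norm{\theta_j(x)}{X_2}\Exp_j f(x)}{L^p(\Omega;X_1)}.
\end{equation*}
Next, identify $\Exp_j f(x)\in X_1$ with its canonical image in $\bddlin(\C,X_1)$; by the defining property of the Rademacher maximal function, the family $\{\Exp_j f(x)\}_{j\in\Z}$ has $R$-bound $M_R f(x)$. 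Applied to the scalar coefficients $\Norm{\theta_j(x)}{X_2}$, this yields
\begin{equation*}
  \BNorm{\sum_j\radem_j\theta_j(x)\Exp_j f(x)}{L^p(\Omega;X_3)}
  \lesssim M_R f(x)\cdot W(x),
\end{equation*}
where $W(x):=\BNorm{\sum_j\radem_j\Norm{\theta_j(x)}{X_2}}{L^p(\Omega)}$. Taking $L^p(E)$-norms and using Fubini, the theorem reduces to the scalar estimate $\Norm{M_R f\cdot W}{L^p(E)}\lesssim\Norm{\{\Norm{\theta_j}{X_2}\}}{\Car^{p+\eta}}\cdot\Norm{f}{L^p(E;X_1)}$.

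For this scalar estimate I would run a good-$\lambda$ argument on $M_R f$. Decompose each super-level set $\{M_R f>\lambda\}=\bigsqcup_i A_i$ into maximal atoms $A_i\in\mathscr{F}_{k_i}^+$ of the filtration. Kahane's inequality relates $W$ to its $L^{p+\eta}(\Omega)$-analogue, and H\"older's inequality in $E$ combined with the Carleson hypothesis gives
\begin{equation*}
  \int_{A_i}W^p\ud\mu
  \lesssim\mu(A_i)^{\eta/(p+\eta)}
  \Big(\int_{A_i}\Exp\Babs{\sum_j\radem_j\Norm{\theta_j}{X_2}}^{p+\eta}\ud\mu\Big)^{p/(p+\eta)}
  \lesssim\mu(A_i).
\end{equation*}
Summing over $i$ yields $\int_{\{M_R f>\lambda\}}W^p\ud\mu\lesssim\mu(\{M_R f>\lambda\})$; integrating against $p\lambda^{p-1}\ud\lambda$ and applying the RMF property $\Norm{M_R f}{L^p(E)}\lesssim\Norm{f}{L^p(E;X_1)}$ closes the argument.

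The main obstacle is that the Carleson hypothesis directly controls only the \emph{truncated} sums $\sum_{j\leq k_i}\radem_j\Norm{\theta_j}{X_2}$ on atoms $A_i\in\mathscr{F}_{k_i}^+$, while the weight $W$ involves the full sum over $j\in\Z$. This is bridged by a stopping-tree argument: for $j>k_i$, the function $\Norm{\theta_j}{X_2}$ is $\mathscr{F}_j$-measurable and therefore constant on the atom $A_i$, so the tail $\sum_{j>k_i}\radem_j\Norm{\theta_j}{X_2}$ contributes only a constant to $A_i$ whose value is controlled by the Carleson estimates on coarser ancestors of $A_i$. The extra slack $\eta>0$ in the exponent is precisely what provides the H\"older room in the displayed estimate; under the additional assumption $\theta_j=\Exp_j\theta_j$ one could invoke Proposition~\ref{prop:JN} to identify $\Car^{p+\eta}$ with $\Car^p$, but the statement is formulated so as not to need this.
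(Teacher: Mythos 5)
A preliminary remark: the paper does not actually prove this statement. Immediately after Theorem~\ref{thm:PfRMF}, the text reads that it ``is eventually not needed here, and it is only recorded above for reasons of comparison. As stated, it is a slight generalization of Theorem 8.2 from my paper with McIntosh and Portal \cite{HMP} and may be proven by an adaptation of the same argument.'' So there is no in-paper proof to compare your attempt against; the burden is delegated entirely to the cited reference. Your high-level strategy is nonetheless the natural one and matches the spirit of what the remark points to: factor $\theta_j(x)=\Norm{\theta_j(x)}{X_2}\hat\theta_j(x)$ and pull out $\hat\theta_j(x)\in\bar B_{X_2}$ by $R$-boundedness, then pull out the family $\{\Exp_j f(x)\}$ by the defining $R$-bound $M_R f(x)$, reducing to a scalar Carleson-type embedding. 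Steps 1--3 are sound.

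The gap is in the scalar step, and you partly flag it yourself. First, the decomposition $\{M_R f>\lambda\}=\bigsqcup_i A_i$ into atoms of the filtration needs justification: $M_R f$ is not $\mathscr{F}_k$-measurable for any single $k$, so one has to pass through the truncated maximal functions $x\mapsto\rbound(\{\Exp_j f(x):j\geq k\})$, which \emph{are} $\mathscr{F}_k$-measurable, and build a genuine stopping time from them; this is not automatic. Second, and more seriously, the displayed chain $\int_{A_i}W^p\lesssim\mu(A_i)^{\eta/(p+\eta)}\big(\int_{A_i}W^{p+\eta}\big)^{p/(p+\eta)}\lesssim\mu(A_i)$ does not follow from the hypothesis: the Carleson condition on $A_i\in\mathscr{F}_{k_i}^+$ only controls the truncated sum $\sum_{j\leq k_i}\radem_j\Norm{\theta_j}{X_2}$, whereas $W$ contains the full sum over $j\in\Z$. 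Your bridge for the tail $j>k_i$ has two problems. It assumes $\Norm{\theta_j}{X_2}$ is $\mathscr{F}_j$-measurable, i.e.\ $\theta_j=\Exp_j\theta_j$, which is a hypothesis of Theorem~\ref{thm:Pf} but is deliberately \emph{not} assumed in Theorem~\ref{thm:PfRMF}---as you yourself note at the end. And even granting measurability, the claim that the resulting constant on $A_i$ is ``controlled by the Carleson estimates on coarser ancestors of $A_i$'' does not go through: the Carleson condition on an ancestor $A'\supset A_i$ only gives an \emph{averaged} bound over $A'$, which cannot be converted into a pointwise (or locally averaged) bound on the much smaller subset $A_i$ without an $A_\infty$-type mechanism that the hypotheses do not supply. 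Collapsing the operator all the way to the pointwise product $M_R f(x)\cdot W(x)$ before stopping discards exactly the per-scale pairing of $\theta_j$ with $\Exp_j f$ that one would need to exploit, on the stopped region, to tame the tail; this is where a correct adaptation of the argument in \cite{HMP} would have to differ from your sketch.
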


Although it played an important r\^ole in earlier versions of this paper,  Theorem~\ref{thm:PfRMF} is eventually not needed here, and it is only recorded above for reasons of comparison. As stated, it is a slight generalization of Theorem 8.2 from my paper with McIntosh and Portal \cite{HMP} and may be proven by an adaptation of the same argument. In an earlier version of this paper, I had tried to push the analogy of Theorems \ref{thm:Pf} and \ref{thm:PfRMF} a bit too far by attempting to deduce even the former one by a variation of the same technique. This argument turned out to be flawed, and I am now unaware of any method of proof which would give both theorems as applications of a common general principle.

The proof of Theorem~\ref{thm:Pf}, which I will give, follows a similar approach as the corresponding results behind the earlier vector-valued $Tb$ theorems, which goes back to Bourgain (see Figiel and Wojtaszczyk \cite{FigWoj}, who attribute a key step of their argument to him). It relies on interpolation between appropriate $H^1$--$L^1$ and $L^{\infty}$--$\BMO$ estimates, where the martingale versions of these spaces will be relevant. Recall that the martingale Hardy space $H^1(\vec{\mathscr{F}};X_1)$ consists of the functions $f\in L^1(E;X)$ with $Mf:=\sup_{k\in\Z}\norm{\Exp_k f(\cdot)}{X_1}\in L^1(E)$. For the present purposes, however, it will be convenient to use the characterization of this space, due to Herz \cite{Herz}, as
\begin{equation*}
  H^1(\vec{\mathscr{F}};X_1)={}^{r}L^{1}(\vec{\mathscr{F}};X_1)+AC(\vec{\mathscr{F}};X_1),\qquad
  1<r\leq\infty.
\end{equation*}
Here the space ${}^{r}L^{1}(\vec{\mathscr{F}};X_1)$ of \emph{$L^r$-regulated $L^1$-functions} consists of all $f\in L^1(E;X_1)$ with a representation $f=\sum_{j=1}^{\infty}\lambda_j a_j$, where $(\lambda_j)_{j=1}^{\infty}\in\ell^1$ and each $a_j\in L^r(E;X_1)$ is an \emph{atom} of $L^r$-type. This means that there exists $k=k(j)\in\Z$ and $A\in\mathscr{F}_k$ such that $a_j=1_A a_j$, $\Exp_k a=0$, and $\mu(A)^{1/r'}\Norm{a}{r}\leq 1$. The norm in this space is the infimum of the $\ell^1$ norms of the coefficient sequences over all such representations. The space $AC(\vec{\mathscr{F}};X_1)$ of \emph{absolutely convergent} $L^1$-martingales consists of $h=\sum_{k\in\Z}\D_k h\in L^1(E;X_1)$ with $\Norm{h}{AC}:=\sum_{k\in\Z}\Norm{\D_k h}{1}<\infty$. This latter component of $H^1(\vec{\mathscr{F}};X_1)$ is required, in addition to the atomic part familiar from the classical theory, because of the non-doubling nature of the underlying measure.

For convenience, it will be assumed that the sequence $\{\theta_j\}_{j\in\Z}$ is finitely nonzero, but the bounds will be proven in terms of its Carleson norm only. Then it is straightforward to pass to the general case in the final $L^p$ estimates of interest.
The proof begins with:

\begin{lemma}
Let $X_3$ be a UMD space and $r\in(1,\infty]$. Then
\begin{equation*}
  \Norm{Pf}{L^1(E\times\Omega;X_3)}
  \lesssim\Norm{\{\theta_j\}_{j\in\Z}}{\Car^1(\vec{\mathscr{F}};X_2)}
    \Norm{f}{{}^r L^1(\vec{\mathscr{F}};X_1)}.
\end{equation*}
\end{lemma}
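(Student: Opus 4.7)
\medskip
\noindent\emph{Plan.}
By the atomic characterization of ${}^{r}L^{1}(\vec{\mathscr{F}};X_1)$ and the linearity of $P$, it suffices to establish the bound
\begin{equation*}
  \Norm{Pa}{L^1(\Omega\times E;X_3)}\lesssim\Norm{\{\theta_j\}}{\Car^1(\vec{\mathscr{F}};X_2)}
\end{equation*}
for a single $L^r$-atom $a=1_A a$ with $A\in\mathscr{F}_k^+$, $\Exp_k a=0$, and $\mu(A)^{1/r'}\Norm{a}{L^r(E;X_1)}\leq 1$. A first useful observation is that $Pa$ is supported on $A$: the decreasing filtration gives $A\in\mathscr{F}_k\subseteq\mathscr{F}_j$ for $j\leq k$ (so $\Exp_j a=1_A\Exp_j a$ for such $j$), while the tower rule combined with $\Exp_k a=0$ forces $\Exp_j a=0$ for $j\geq k$. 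Hence $Pa=1_A\sum_{j<k}\radem_j\theta_j\Exp_j a$.

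The core of the argument is the standard atomic reduction to an $L^r$ estimate. H\"older's inequality on $A$ with exponents $r',r$ applied inside the $\omega$-integral, followed by Jensen's inequality in $\omega$, yields
\begin{equation*}
  \Norm{Pa}{L^1(\Omega\times E;X_3)}\leq\mu(A)^{1/r'}\Norm{Pa}{L^r(\Omega\times E;X_3)}.
\end{equation*}
Combining this with $\Norm{a}{r}\leq\mu(A)^{-1/r'}$ and the equivalence $\Norm{\{\theta_j\}}{\Car^1}\sim\Norm{\{\theta_j\}}{\Car^r}$ from Proposition~\ref{prop:JN} (applicable because $\theta_j=\Exp_j\theta_j$), the desired inequality reduces to the $L^r$ paraproduct bound
\begin{equation*}
  \Norm{Pa}{L^r(\Omega\times E;X_3)}\lesssim\Norm{\{\theta_j\}}{\Car^r(\vec{\mathscr{F}};X_2)}\Norm{a}{L^r(E;X_1)}.
\end{equation*}

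For the $L^r$ bound I would reorganize $Pa$ via the telescoping identity $\Exp_j a=\sum_{l=j+1}^k\D_l a$ (with $\D_l a:=\Exp_{l-1}a-\Exp_l a$; the identity uses $\Exp_k a=0$) to rewrite
\begin{equation*}
  Pa=\sum_{l\leq k}S_{l-1}\D_l a,\qquad S_{l-1}:=\sum_{j\leq l-1}\radem_j\theta_j,
\end{equation*}
and then pass to the enlarged filtration on $\Omega\times E$ whose $l$th $\sigma$-algebra is generated by $(\radem_j)_{j\geq l}$ together with $\mathscr{F}_{l-1}$. With respect to this filtration, $(\radem_j\theta_j)_j$ is an $X_2$-valued martingale difference sequence, the Carleson norm of $\{\theta_j\}$ translates (essentially as in the proof of Proposition~\ref{prop:JN}) into a $\BMO^r$-bound for the martingale $\tilde M:=\sum_j\radem_j\theta_j$, and $(\Exp_j a)_j$ is predictable. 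The $L^r$ bound then follows by a vector-valued $\BMO$--$L^r$ martingale transform argument that invokes the UMD property of $X_3$.

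The main obstacle is this last $L^r$ bound: because $\theta_j$ is operator-valued (in $X_2\subseteq\bddlin(X_1,X_3)$) and multiplies the vector $\Exp_j a\in X_1$, the scalar contraction principle for Rademacher sums does not decouple the factors in an obvious way. UMD of $X_3$ has to be used together with the identity $\theta_j=\Exp_j\theta_j$ and the Carleson-to-BMO translation in order to control the operator--vector bilinear paraproduct.
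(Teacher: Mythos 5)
Your atom reduction, the identification of the support of $Pa$, and the observation that the problem reduces to bounding $Pa$ on a single atom are all in line with the paper's argument. But the core of your plan does not close, and you have yourself put your finger on where it fails. After telescoping to $Pa=\sum_{l\le k}S_{l-1}\D_l a$, what you need is precisely an operator-valued paraproduct estimate $\BMO\times L^r\to L^r$, which is essentially Theorem~\ref{thm:Pf} itself (at exponent $r$). So the reduction is circular: the lemma you are proving is the $H^1$-endpoint of Theorem~\ref{thm:Pf}, and you cannot invoke the $L^r$ bound of the same paraproduct to establish it. The ``vector-valued $\BMO$--$L^r$ martingale transform argument'' you gesture at is not a known black box in this operator-valued setting; the obstruction you describe (the contraction principle does not separate $\theta_j$ from $\D_l a$) is genuine, and the telescoping actively destroys the structure that makes the atom useful.

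The paper avoids this entirely by \emph{not} telescoping, and by inserting an extra conditional expectation instead of removing one. Since $\theta_j=\Exp_j\theta_j$ and $1_A$ is $\mathscr{F}_j$-measurable for $j<k$, one has $\theta_j\Exp_j a=1_A\Exp_j(\theta_j a)$, so
\begin{equation*}
  Pa=\sum_{j<k}\radem_j\,1_A\,\Exp_j(\theta_j a).
\end{equation*}
One then applies H\"older with a strictly intermediate exponent $p\in(1,r)$ to reduce to an $L^p$ bound (this choice also keeps the $r=\infty$ case under control, which your direct $L^1\to L^r$ step would not), then Bourgain's vector-valued Stein inequality in $X_3$ to strip off the $\Exp_j$'s, leaving $\sum_{j<k}\radem_j\,\theta_j a$. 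Here every term involves the \emph{same} vector $a(x)$, so the sum factors pointwise as $\bigl(\sum_{j<k}\radem_j\theta_j(x)\bigr)a(x)$, and the operator--vector decoupling is trivial: one simply bounds by $\bigl\|\sum_{j<k}\radem_j\theta_j\bigr\|_{X_2}\,\|a\|_{X_1}$. A second H\"older with $1/p=1/q+1/r$ and the Carleson condition (together with Proposition~\ref{prop:JN} to pass from $\Car^1$ to $\Car^q$) finish the bound. The common-factor structure of the atom is exactly what Stein's inequality preserves and what your Abel summation throws away; this is the missing idea.
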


\begin{proof}
Thanks to the a priori boundedness under the assumption that $\{\theta_j\}_{j\in\Z}$ is finitely non-zero, it suffices to prove the uniform bound on all atoms $a$. So let $a=1_A a$ with $A\in\mathscr{F}_k$, $\Exp_k a=0$ and $\mu(A)^{1/r'}\Norm{a}{r}\leq 1$. Let also the Carleson norm be normalized to be $1$. Choose auxiliary exponents $p\in(1,r)$ and $q\in[p,\infty)$ such that $1/p=1/q+1/r$. Now $\Exp_j a=\Exp_j\Exp_k a=0$ for $j\geq k$, while $\theta_j\Exp_j a=1_A\Exp_j(\theta_j a)$ for $j<k$, and hence 
\begin{equation*}
  Pa=\sum_{j<k}\radem_j 1_A\Exp_j(\theta_j a).
\end{equation*}
This leads to the estimate
\begin{align*}
  \Norm{Pa}{1}
  &\leq\mu(A)^{1/p'}\BNorm{\sum_{j<k}\radem_j\Exp_j(\theta_j a)}{p}
    \lesssim\mu(A)^{1/p'}\BNorm{\sum_{j<k}\radem_j\theta_j a}{p} \\
   &\leq\mu(A)^{1/p'}\BNorm{\sum_{j<k}\radem_j\theta_j 1_A}{q}\Norm{a}{r}
     \leq\mu(A)^{1/p'}\mu(A)^{1/q}\mu(A)^{-1/r'}=1. 
\end{align*}
where the second step was Bourgain's vector-valued Stein inequality~\cite{Bourgain:86}, and all the other bounds are elementary.
\end{proof}

The $H^1(\vec{\mathscr{F}};X_1)\to L^1(E\times\Omega;X_3)$ boundedness of $P$ is completed by the characterization of Herz together with:

\begin{lemma}
For arbitrary Banach spaces,
\begin{equation*}
    \Norm{Pf}{L^1(E\times\Omega;X_3)}
  \lesssim\Norm{\{\theta_j\}_{j\in\Z}}{\Car^1(\vec{\mathscr{F}};X_2)}
    \Norm{f}{AC(\vec{\mathscr{F}};X_1)}.
\end{equation*}
\end{lemma}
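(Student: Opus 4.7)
My plan is to exploit the atomic structure of $AC(\vec{\mathscr{F}};X_1)$. Since $f=\sum_{k\in\Z}\D_k f$ with $\sum_k\Norm{\D_k f}{1}=\Norm{f}{AC}$, and since the finitely many non-zero $\theta_j$ make $P$ a finite sum so that it commutes with the $L^1$-convergent expansion of $f$, the triangle inequality reduces matters to the uniform-in-$k$ bound
\begin{equation*}
  \Norm{P(\D_k f)}{L^1(E\times\Omega;X_3)}\lesssim\Norm{\{\theta_j\}_{j\in\Z}}{\Car^1(\vec{\mathscr{F}};X_2)}\Norm{\D_k f}{L^1(E;X_1)}.
\end{equation*}
Fix $k\in\Z$ and set $d:=\D_k f=\Exp_{k-1}f-\Exp_k f$. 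Since the filtration is decreasing in the index (coarser for larger $j$), the tower property yields $\Exp_j d=d$ for $j\leq k-1$ and $\Exp_j d=0$ for $j\geq k$, so that the entire series defining $P$ collapses to the finite ``tail''
\begin{equation*}
  Pd=\sum_{j\leq k-1}\radem_j\theta_j\, d.
\end{equation*}

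Next I pass to a pointwise scalar bound. For every realization of the random signs and every $x\in E$, the sum $\sum_{j\leq k-1}\radem_j\theta_j(x)$ lies in $X_2\subseteq\bddlin(X_1,X_3)$, so $\Norm{Pd(x)}{X_3}\leq\Norm{\sum_{j\leq k-1}\radem_j\theta_j(x)}{X_2}\cdot\norm{d(x)}{X_1}$. Integrating over $\Omega$ and then over $E$, I obtain
\begin{equation*}
  \Norm{Pd}{L^1(E\times\Omega;X_3)}\leq\int_E F(x)\,\norm{d(x)}{X_1}\ud\mu(x),\qquad
  F(x):=\BNorm{\sum_{j\leq k-1}\radem_j\theta_j(x)}{L^1(\Omega;X_2)}.
\end{equation*}

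The decisive step is that $d=\Exp_{k-1}f-\Exp_k f$ is $\mathscr{F}_{k-1}$-measurable, hence so is $\norm{d(\cdot)}{X_1}$. This allows me to insert a conditional expectation against $F$ without changing the integral:
\begin{equation*}
  \int_E F\cdot\norm{d}{X_1}\ud\mu
  =\int_E \Exp_{k-1}[F]\cdot\norm{d}{X_1}\ud\mu
  \leq\Norm{\Exp_{k-1}[F]}{L^\infty(E)}\Norm{d}{L^1(E;X_1)},
\end{equation*}
and the $L^\infty$-norm appearing here is precisely the $(k-1)$-th term in the first of the two equivalent expressions defining $\Norm{\{\theta_j\}_{j\in\Z}}{\Car^1(\vec{\mathscr{F}};X_2)}$, so it is bounded by the Carleson norm. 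Summing over $k$ completes the estimate.

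I do not expect any genuine obstacle: the proof is essentially an $L^1$--$L^\infty$ duality pairing in disguise, and the only conceptual point is recognising that the conditional expectation characterization of the $\Car^1$ norm is tailored to pair against the $\mathscr{F}_{k-1}$-measurable density $\norm{\D_k f}{X_1}$. No UMD or other geometric property of $X_1$ or $X_3$ is needed, which matches the statement ``for arbitrary Banach spaces.''
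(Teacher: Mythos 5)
Your proof is correct and follows essentially the same route as the paper's: collapse $P\D_k f$ to the tail sum $\sum_{j<k}\radem_j\theta_j\D_k f$ via $\Exp_j\D_k=\D_k$ for $j<k$, use the pointwise operator-norm bound together with the $\mathscr{F}_{k-1}$-measurability of $\norm{\D_k f}{X_1}$ to factor out the $\Car^1$ quantity, and sum over $k$. The only cosmetic difference is that you integrate over $\Omega$ first and then condition on $\mathscr{F}_{k-1}$ on $E$, whereas the paper handles both at once through the lifted conditional expectation $\tilde\Exp_{k-1}$ on $E\times\Omega$; the two computations are identical in substance.
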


\begin{proof}
Notice that $\Exp_j\D_k=\D_k$ for $j<k$ and zero otherwise. Hence
\begin{equation*}
  P\D_k f=\sum_{j<k}\radem_j\theta_j\D_k f,
\end{equation*}
and thus (taking the Carleson norm equal to one again)
\begin{align*}
  \Norm{P\D_k f}{1}
  &=\int_{E\times\Omega}\tilde{\Exp}_{k-1}\Bnorm{\sum_{j<k}\radem_j\theta_j\D_k f}{X_3}\ud\tilde\mu \\
  &\leq\int_{E\times\Omega}\Big(\tilde{\Exp}_{k-1}\Bnorm{\sum_{j<k}\radem_j\theta_j}{X_2}\Big)
      \norm{\D_k f}{X_1}\ud\tilde\mu \\
   &\leq\int_{E\times\Omega}\norm{\D_k f}{X_1}\ud\tilde\mu=\Norm{\D_k f}{1},
\end{align*}
where the second step follows from the fact that $\D_k f$ is already $\mathscr{G}_{k-1}$-measurable, while in the third one it was observed that the quantity in parentheses is uniformly bounded by the $\Norm{\{\theta_j\}_{j\in\Z}}{\Car^1}$. Summing over $k\in\Z$ completes the proof.
\end{proof}

In the upper end, the space $\BMO(\vec{\mathscr{G}};X_3)$ on the product measure space $E\times\Omega$, as defined in the proof of Proposition~\ref{prop:JN}, is needed.

\begin{lemma}
Let $X_3$ be a UMD space. Then
\begin{equation*}
  \Norm{Pf}{\BMO(\vec{\mathscr{G}};X_3)}
  \lesssim\Norm{\{\theta_j\}_{j\in\Z}}{\Car^1(\vec{\mathscr{F}};X_2)}
    \Norm{f}{L^{\infty}(E;X_1)}.
\end{equation*}
\end{lemma}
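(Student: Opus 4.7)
The plan is to bound the \(\BMO\) norm via its \(L^{\infty}\)-based \(\BMO^p\) formulation for a conveniently chosen \(p\in(1,\infty)\); the John--Nirenberg equivalence of the \(\BMO^p\) norms (which underlies Proposition~\ref{prop:JN}) then transfers the result to \(\BMO=\BMO^1\). First, since \(\radem_j\) is independent of \(\mathscr{G}_{k+1}\) and mean-zero for \(j\leq k\), while each summand is \(\mathscr{G}_{k+1}\)-measurable for \(j\geq k+1\), one has \(Pf - \tilde{\Exp}_{k+1}Pf = \sum_{j\leq k}\radem_j\theta_j\Exp_j f\). A tower-and-symmetry calculation --- condition first on \(\sigma(\mathscr{E}_k,\mathscr{M})\) to integrate out the signs \(\radem_j\) with \(j<k\), then apply the substitution \(\radem_j\mapsto\radem_k\radem_j\) to remove the remaining dependence on \(\radem_k\) --- gives
\[
  \tilde{\Exp}_k\BNorm{\sum_{j\leq k}\radem_j\theta_j\Exp_j f}{X_3}^{p} = \Exp_k\Bigl[\Exp\BNorm{\sum_{j\leq k}\radem_j\theta_j\Exp_j f}{X_3}^{p}\Bigr].
\]
Thus the \(\BMO^p\) bound reduces to establishing
\[
  \mu(A)^{-1/p}\BNorm{\sum_{j\leq k}\radem_j\theta_j\Exp_j f}{L^p(A\times\Omega;X_3)}
   \lesssim \Norm{f}{\infty}\Norm{\{\theta_j\}}{\Car^1(\vec{\mathscr{F}};X_2)}
\]
uniformly in \(k\in\Z\) and \(A\in\mathscr{F}_k^+\).

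For such an \(A\), I would decompose \(f=\Exp_k f + g + h\) with \(g:=1_A(f-\Exp_k f)\) and \(h:=1_{A^c}(f-\Exp_k f)\). Since \(A\in\mathscr{F}_k\subseteq\mathscr{F}_j\) for \(j\leq k\), the indicator \(1_{A^c}\) pulls out of \(\Exp_j\), so \(\Exp_j h\) vanishes on \(A\); also \(\Exp_j\Exp_k f=\Exp_k f\) since the filtration decreases in \(j\). Consequently, on \(A\),
\[
  \sum_{j\leq k}\radem_j\theta_j\Exp_j f
  = \Big(\sum_{j\leq k}\radem_j\theta_j\Big)\Exp_k f + \sum_{j\leq k}\radem_j\theta_j\Exp_j g.
\]
The first summand is dominated pointwise in \(X_3\) by \(\Norm{f}{\infty}\Norm{\sum_j\radem_j\theta_j}{X_2}\); its normalised \(L^p(A\times\Omega)\) norm is at most \(\Norm{f}{\infty}\Norm{\{\theta_j\}}{\Car^p(\vec{\mathscr{F}};X_2)}\), which is comparable to \(\Norm{f}{\infty}\Norm{\{\theta_j\}}{\Car^1(\vec{\mathscr{F}};X_2)}\) by Proposition~\ref{prop:JN}.

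The second summand is where the UMD property of \(X_3\) enters. Because \(\theta_j = \Exp_j\theta_j\) is \(\mathscr{F}_j\)-measurable it pulls into \(\Exp_j\), so \(\theta_j\Exp_j g = \Exp_j(\theta_j g)\); and since \(\theta_j g\) is supported in \(A\in\mathscr{F}_j\), the same holds for \(\Exp_j(\theta_j g)\), making the outer \(1_A\) redundant. Bourgain's vector-valued Stein inequality~\cite{Bourgain:86} applied in \(X_3\) then yields
\[
  \BNorm{\sum_{j\leq k}\radem_j\Exp_j(\theta_j g)}{L^p(\Omega\times E;X_3)}
   \lesssim \BNorm{\sum_{j\leq k}\radem_j\theta_j g}{L^p(\Omega\times E;X_3)}.
\]
Pointwise in \(x\), \(\sum_j\radem_j\theta_j(x)g(x)=\bigl(\sum_j\radem_j\theta_j(x)\bigr)g(x)\), so the contraction principle bounds its \(X_3\)-norm by \(\Norm{\sum_j\radem_j\theta_j(x)}{X_2}\Norm{g(x)}{X_1}\); using \(\Norm{g}{\infty}\leq 2\Norm{f}{\infty}\) together with the same \(\Car^p\)-averaging estimate over \(A\) as before, this summand is bounded by \(\lesssim \Norm{f}{\infty}\mu(A)^{1/p}\Norm{\{\theta_j\}}{\Car^1}\). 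Combining the two pieces and taking the supremum over \(A\) and \(k\) completes the proof. The most delicate step is the tower-and-symmetry identification of \(\tilde{\Exp}_k\) with \(\Exp_k\) composed with full averaging over the Rademacher signs --- this is what transforms a product-space \(\BMO\) condition into an averaging condition on \(E\) compatible with the \(\Car^1\) norm; the rest is careful \(\mathscr{F}_j\)-measurability bookkeeping plus one application of Stein's inequality.
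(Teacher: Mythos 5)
Your proof is correct, but it takes a genuinely different route from the paper's. You reduce the product-space $\BMO^p(\vec{\mathscr{G}})$ estimate to a condition over sets $A\in\mathscr{F}_k^+$ in $E$ alone, by re-running the tower-and-symmetry argument from Proposition~\ref{prop:JN} directly on $Pf$; you then decompose $f=\Exp_k f + 1_A(f-\Exp_k f)+1_{A^c}(f-\Exp_k f)$ and handle the two surviving pieces separately, applying Stein's inequality only to the localized middle piece. The paper, by contrast, does not reduce to $E$-sets at all: it works directly with $\tilde{A}\in\mathscr{G}_{k-1}^+$ in the product space, first decouples the Rademacher signs $\radem_j$, $j<k$, into independent copies (and absorbs $\radem_{k-1}$ by symmetry), then applies Fubini to slice $\tilde{A}$ into its $\mathscr{F}_{k-1}$-measurable $\omega$-sections $\tilde{A}(\omega)$, commutes $1_{\tilde{A}(\omega)}$ past $\Exp_j$, and applies Stein and the Carleson condition in one sweep. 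The paper's method is more self-contained within the lemma, while yours leans on the Proposition~\ref{prop:JN} mechanism twice more (once for the reduction, once for $\Car^p\eqsim\Car^1$) but stays with $E$-sets throughout.

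One genuine simplification you could make: the three-term decomposition of $f$ is not needed. Since $A\in\mathscr{F}_k\subseteq\mathscr{F}_j$ for $j\leq k$, one has $1_A\theta_j\Exp_j f = \Exp_j(1_A\theta_j f)$ directly, so Stein gives $\Norm{1_A\sum_{j\leq k}\radem_j\theta_j\Exp_j f}{L^p}\lesssim\Norm{\sum_{j\leq k}\radem_j 1_A\theta_j f}{L^p}$, and the pointwise operator-norm bound $\norm{\sum_j\radem_j 1_A\theta_j f}{X_3}\leq 1_A\norm{\sum_j\radem_j\theta_j}{X_2}\Norm{f}{\infty}$ followed by the Carleson condition finishes it in one stroke, bypassing the case distinction between the $\Exp_k f$- and $g$-parts. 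This does not affect the validity of your argument, only its length.
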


\begin{proof}
As in the proof of Proposition~\ref{prop:JN}, there holds
\begin{equation*}
  (I-\tilde{\Exp}_k)Pf=\sum_{j<k}\radem_j\Exp_j(\theta_j f).
\end{equation*}
Let $\tilde{A}\in\mathscr{G}_{k-1}$, and notice that the signs $\radem_j$, for $j<k-1$, are independent of $\mathscr{G}_{k-1}$ as well as, obviously, of the functions $\Exp_j(\theta_j f)$. Hence they may be replaced by independent copies $\radem_j'$ on another probability space $\Omega'$ as far as the computation of norms is concerned. This leads to
\begin{align*}
  &\Norm{1_{\tilde{A}}(I-\tilde{\Exp}_k)Pf}{L^p(E\times\Omega;X_3)} \\
  &=\BNorm{1_{\tilde{A}}\Big(\radem_{k-1}\Exp_{k-1}(\theta_{k-1}f)
       +\sum_{j<k-1}\radem_j'\Exp_j(\theta_j f)\Big)}{L^p(E\times\Omega;L^p(\Omega';X_3))}.
\end{align*}
The observation that the inner $L^p(\Omega';X_3)$ norm is actually independent of the value of $\radem_{k-1}\in\{-1,+1\}$ allows even its replacement by an independent copy, resulting in
\begin{equation*}
  \Norm{1_{\tilde{A}}(I-\tilde{\Exp}_k)Pf}{L^p(E\times\Omega;X_3)}
  =\BNorm{1_{\tilde{A}}
       \sum_{j<k}\radem_j'\Exp_j(\theta_j f)}{L^p(E\times\Omega;L^p(\Omega';X_3))}.
\end{equation*}

By Fubini's theorem, $\tilde{A}(\omega):=\{x\in E:(x,\omega)\in\tilde{A}\}\in\mathscr{F}_{k-1}$ for $\prob$-a.e.~$\omega\in\Omega$, so that the multiplication operator with $1_{\tilde{A}(\omega)}$ commutes with $\Exp_j$ for $j<k$. Thus
\begin{align*}
  &\BNorm{1_{\tilde{A}}
       \sum_{j<k}\radem_j'\Exp_j(\theta_j f)}{L^p(E\times\Omega;L^p(\Omega';X_3))}^p \\
   &=\int_{\Omega}\BNorm{\sum_{j<k}\radem_j'\Exp_j(1_{\tilde{A}(\omega)}\theta_j f)}{
           L^p(E\times\Omega;X_3)}^p\ud\prob(\omega) \\
  &\lesssim\int_{\Omega}\BNorm{\sum_{j<k}\radem_j' 1_{\tilde{A}(\omega)}\theta_j f}{
           L^p(E\times\Omega;X_3)}^p\ud\prob(\omega) \\
   &\leq\int_{\Omega}\BNorm{1_{\tilde{A}(\omega)}\sum_{j<k}\radem_j' \theta_j}{
          L^p(E\times\Omega;X_2)}^p\Norm{f}{\infty}\ud\prob(\omega) \\
   &\leq\int_{\Omega}\mu(\tilde{A}(\omega))\ud\prob(\omega)\Norm{f}{\infty}
     =\tilde\mu(\tilde{A})\Norm{f}{\infty},
\end{align*}
where the first estimate was Bourgain's vector-valued Stein inequality, the third one the assumed Carleson condition, while the second is obvious. This completes the proof.
\end{proof}

Taken together, the last three lemmas yield:

\begin{proof}[Proof of Theorem~\ref{thm:Pf}]
It has been shown that
\begin{equation*}
  P:H^1(\vec{\mathscr{F}};X_1)\to L^1(E\times\Omega;X_3),\quad
  P:L^{\infty}(E;X_1)\to \BMO(\vec{\mathscr{G}};X_3);
\end{equation*}
thus the composition of $P$ with the sharp maximal operator $M^{\#}$ (more precisely, its martingale version with respect to $\vec{\mathscr{G}}$) maps
\begin{equation*}
  M^{\#}P:H^1(\vec{\mathscr{F}};X_1)\to L^{1,\infty}(E\times\Omega),\quad
  M^{\#}P:L^{\infty}(E;X_1)\to L^{\infty}(E\times\Omega),
\end{equation*}
and hence the boundedness of
\begin{equation*}
  M^{\#}P:L^p(E;X_1)\to L^p(E\times\Omega),\quad
  P:L^p(E;X_1)\to L^p(E\times\Omega;X_3)
\end{equation*}
follow from standard interpolation results.
\end{proof}

As a matter of fact, the preceding proof of Theorem~\ref{thm:Pf} was essentially written down in my original manuscript of \cite{H:Tb} already, but as this generality was not necessary for the version of the $Tb$ theorem then under consideration, the referee insisted in leaving it out in favour of a simpler argument valid for doubling measures only, and hence it did not appear in the published version of \cite{H:Tb}.

\section{Martingale difference decomposition}\label{sec:Haar}

In this section I prove the unconditional convergence of the twisted martingale difference decomposition stated in \eqref{eq:twistedUC} and~\eqref{eq:twistedHaar} and establish the basic properties of the ``Haar'' functions \(\phi_Q,\psi_R\) appearing in this decomposition. By standard considerations involving duality and the density in \(L^p(\mu)\) of linear combinations of indicators of dyadic cubes, it suffices for~\eqref{eq:twistedUC} to show the following randomized unconditionality estimate. In the doubling case, it was proven in~\cite{H:Tb}.

\begin{proposition}\label{prop:UC}
\begin{equation*}
  \BNorm{\sum_{k\in\Z}\radem_k\D_k^b f}{L^p(\prob\otimes\mu;X)}
  \lesssim\Norm{f}{L^p(\mu;X)}.
\end{equation*}
\end{proposition}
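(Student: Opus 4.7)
The plan is to decompose $\D_k^b$ algebraically into a ``transformed'' martingale-difference piece and a paraproduct-type correction, then handle them with UMD and the Carleson paraproduct estimate (Theorem~\ref{thm:Pf}), respectively. Using $\Exp_j^b f = b\,\Exp_j f/\Exp_j b$ together with $\Exp_{k-1}b - \Exp_k b = \D_k b$, one verifies by direct computation that
\[
  \D_k^b f \;=\; \frac{b}{\Exp_k b}\,\D_k f \;-\; \frac{b\,\D_k b}{\Exp_{k-1} b\cdot\Exp_k b}\,\Exp_{k-1} f.
\]
Both terms share the common scalar factor $b$, which is independent of $k$ and satisfies $\Norm{b}{\infty}\leq 1$, so it pulls pointwise outside each randomised sum $\sum_k\radem_k(\cdot)$ with only a unit cost, reducing the matter to bounding the two $b$-free sums.

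For the first piece $\sum_k\radem_k(\Exp_k b)^{-1}\D_k f$, the multiplier $w_k := (\Exp_k b)^{-1}$ is bounded by $\delta^{-1}$ by weak accretivity and, in the geometric indexing where $\sigma(\mathscr{D}_k)\subseteq\sigma(\mathscr{D}_{k-1})$, is ``predictable'' for the backward martingale $(\Exp_k f)_k$. Consequently $(w_k\D_k f)_k$ is itself a martingale-difference sequence, and Burkholder's martingale-transform inequality combined with the UMD unconditionality of $X$ bounds the sum by $\delta^{-1}\Norm{f}{L^p(\mu;X)}$.

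For the second, paraproduct piece, the natural ``symbol'' $\theta_k := \D_k b/(\Exp_{k-1} b\cdot\Exp_k b)$ is only $\sigma(\mathscr{D}_{k-1})$-measurable, one step too fine for Theorem~\ref{thm:Pf}, so I reindex $j := k-1$, obtaining $\sum_j\radem_{j+1}\tilde\theta_j\Exp_j f$ with $\tilde\theta_j := \theta_{j+1}$. Now $\tilde\theta_j$ is $\sigma(\mathscr{D}_j)$-measurable, since $\D_{j+1}b$, $\Exp_j b$, and $\Exp_{j+1}b$ all are (using that the filtration is decreasing), and the distribution-preserving relabelling $\radem_{j+1}\leftrightarrow\radem_j$ puts the sum in the exact form of Theorem~\ref{thm:Pf}, applied with $X_1 = X_3 = X$ and $X_2 = \C\subseteq\bddlin(X)$. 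It remains to bound $\Norm{\{\tilde\theta_j\}}{\Car^1(\vec{\mathscr{F}})}$, which by the John--Nirenberg Proposition~\ref{prop:JN} is equivalent to $\Norm{\{\tilde\theta_j\}}{\Car^2(\vec{\mathscr{F}})}$. Using $|\tilde\theta_j|\leq\delta^{-2}|\D_{j+1}b|$ and orthogonality of the Rademachers in $L^2(\prob)$, this reduces to the Carleson-type inequality $\int_A\sum_{j'\leq k+1}|\D_{j'}b|^2\ud\mu\lesssim\mu(A)$ for $A\in\mathscr{F}_k$; since $1_A$ is $\mathscr{F}_k$-measurable, the martingale Pythagoras identity
\[
  \Exp_k\Big[\sum_{j'\leq k+1}|\D_{j'}b|^2\Big] = |\D_{k+1}b|^2 + \Exp_k\big(|b - \Exp_k b|^2\big)
\]
together with $|\D_{k+1}b|, |b - \Exp_k b|\leq 2\Norm{b}{\infty}$ yields the bound.

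The only genuine subtlety is this off-by-one index mismatch between the natural $\sigma$-algebra of $\D_k b$ and that demanded by Theorem~\ref{thm:Pf}; once the reindexing is performed, the proof is a clean assembly of the UMD martingale-transform inequality and the Carleson paraproduct bound developed in the previous section.
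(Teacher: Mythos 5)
Your proof is correct and follows essentially the same route as the paper: the identical algebraic splitting of $\D_k^b f$ (which is precisely \eqref{eq:writeOut}), a martingale-transform/UMD bound for the $\D_k f$ piece, and a reduction of the remaining piece to the paraproduct estimate of Theorem~\ref{thm:Pf} after the off-by-one reindexing with symbol $\theta_j$ built from $\D_{j+1}b$. The only differences are in low-level details: you invoke Burkholder's martingale-transform inequality where the paper simply discards the bounded scalar factors $(\Exp_k b)^{-1}$ by the contraction principle, and you verify the Carleson condition via $\Car^2$ together with the martingale Pythagoras identity, whereas the paper verifies it via $\Car^q$ for $q\in(p,\infty)$ and the UMD property of $\C$; both checks are valid and roughly equal in length, so neither buys anything substantive over the other.
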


\begin{proof}
Write out
\begin{equation}\label{eq:writeOut}
  \frac{1}{b}\D_k^b f
  =\frac{\Exp_{k-1}f}{\Exp_{k-1}b}-\frac{\Exp_k f}{\Exp_k b}
  =-\frac{\D_k b}{\Exp_k b\cdot\Exp_{k-1}b}\Exp_{k-1}f
   +\frac{\D_k f}{\Exp_k b},
\end{equation}
and observe that the factors \(b\) and \(\Exp_k b\) may be discarded by their boundedness from above and below. The second term on the right above is then simply a martingale difference of \(f\), so their random sum is estimated as a direct application of UMD.

The random sum of the first terms gives the paraproduct \(Pf\) from~\eqref{eq:Pf} with \(\theta_k:=\D_{k+1}b=\Exp_k \theta_k\), and hence is dominated by \(\Norm{f}{L^p(\mu;X)}\) times the supremum over \(k\in\Z\) and \(A\in\mathscr{F}_k^+\) of
\begin{equation*}
  \mu(A)^{-1/q}\Big\{\BNorm{\sum_{j<k}\radem_j\D_{j+1}(1_A b)}{L^q(\prob\otimes\mu)}
    +\Norm{1_A\D_{k+1}b}{L^q(\mu)}\Big\},\qquad q\in(p,\infty).
\end{equation*}
The first term in braces is bounded by \(C\Norm{1_A b}{L^q(\mu)}\leq C\mu(A)^{1/q}\Norm{b}{\infty}\) using the UMD property of \(\C\), while the second one is dominated by \(\mu(A)^{1/q}\big(\Norm{\Exp_k b}{\infty}+\Norm{\Exp_{k+1}b}{\infty}\big)\leq 2\mu(A)^{1/q}\Norm{b}{\infty}\), since the conditional expectations are contractions in \(L^{\infty}(E)\). Collecting everything together, the proof is complete.
\end{proof}

I then pass to the finer decomposition of the martingale differences \(\D_k^b f\) in terms of rank-one operators.
Generalizing the notation \(\Exp^b_Q\) for \(Q\in\mathscr{D}\), denote
\begin{equation*}
  \Exp^b_A f:=1_A\frac{\int_A f\ud\mu}{\int_A b\ud\mu}\cdot b,
\end{equation*}
when \(A\) is any measurable set with \(\int_A b\ud\mu\neq 0\); the special case \(b\equiv 1\) will be abbreviated as \(\Exp_A:=\Exp_A^1\). If \(\mathscr{A}\) is a disjoint collection of such sets, write
\begin{equation*}
  \Exp^b_{\mathscr{A}}f:=\sum_{A\in\mathscr{A}}\Exp^b_A f.
\end{equation*}
With this notation one can express
\begin{equation*}
  \D^b_Q f
  =\Exp^b_{\{Q'\in\mathscr{D};Q'\subset Q,\ell(Q')=\ell(Q)/2\}}f-\Exp^b_Q f.
\end{equation*}

\begin{lemma}
For each \(Q\in\mathscr{D}\), its \(2^N\) subcubes \(Q_u\in\mathscr{D}\) with \(\ell(Q_u)=\ell(Q)/2\) may be indexed in such a way that
\begin{equation}\label{eq:subaccretive}
  \Babs{\int_{\bigcup_{u=k}^{2^N}Q_u} b\ud\mu}\geq
  [1-(k-1)2^{-N}]\delta\mu(Q)
\end{equation}
for all \(k=1,\ldots,2^N\).
\end{lemma}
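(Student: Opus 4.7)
The plan is to order the subcubes $Q_u$ greedily according to the real part of their $b$-integrals relative to the phase of $\int_Q b\,d\mu$, and then to use the elementary fact that the smallest $k-1$ of $2^N$ real numbers sum to at most $(k-1)/2^N$ times the total.

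More concretely, I would start by setting $\alpha:=\int_Q b\,d\mu$, which satisfies $|\alpha|\ge\delta\mu(Q)$ by weak accretivity, and choosing a unimodular $\beta\in\C$ with $\bar\beta\alpha=|\alpha|$. Define the real quantities
\begin{equation*}
  c_u:=\Re\Bigl(\bar\beta\int_{Q_u}b\ud\mu\Bigr),\qquad u=1,\ldots,2^N,
\end{equation*}
so that $\sum_u c_u=|\alpha|\ge\delta\mu(Q)$, and each $|c_u|\le\int_{Q_u}|b|\ud\mu\le\mu(Q_u)=2^{-N}\mu(Q)$ because $\Norm{b}{\infty}\le 1$. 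Now index the $2^N$ subcubes in such a way that $c_1\le c_2\le\cdots\le c_{2^N}$.

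The key elementary fact is that for any increasing sequence $c_1\le\cdots\le c_n$ one has $\sum_{u=1}^{k-1}c_u\le\frac{k-1}{n}\sum_{u=1}^{n}c_u$; this is just the statement that the average of the smallest $k-1$ entries is no larger than the overall average, and follows at once from $c_i\le c_j$ for $i<k\le j$ by pairing the two double sums $(n-k+1)\sum_{i<k}c_i$ and $(k-1)\sum_{j\ge k}c_j$ termwise. Applying this with $n=2^N$ gives
\begin{equation*}
  \sum_{u=k}^{2^N}c_u
  =\sum_{u=1}^{2^N}c_u-\sum_{u=1}^{k-1}c_u
  \ge\Bigl[1-\frac{k-1}{2^N}\Bigr]|\alpha|
  \ge\bigl[1-(k-1)2^{-N}\bigr]\delta\mu(Q).
\end{equation*}

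Finally, the desired bound follows by passing from the real part back to the modulus:
\begin{equation*}
  \Babs{\int_{\bigcup_{u=k}^{2^N}Q_u}b\ud\mu}
  \ge\Re\Bigl(\bar\beta\int_{\bigcup_{u=k}^{2^N}Q_u}b\ud\mu\Bigr)
  =\sum_{u=k}^{2^N}c_u
  \ge\bigl[1-(k-1)2^{-N}\bigr]\delta\mu(Q).
\end{equation*}
There is really no hard step in this argument; the only point worth highlighting is the need to pass to a rotated frame before sorting, so that the removed mass is controlled by the elementary averaging inequality above. The statement is purely combinatorial in nature and no deeper analytic input beyond $\Norm{b}{\infty}\le 1$ and weak accretivity is required.
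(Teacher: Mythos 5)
Your argument is correct, and it takes a genuinely different route from the paper. The paper's proof is by induction: having chosen $Q_1,\ldots,Q_{j-1}$ so that \eqref{eq:subaccretive} holds up to $k=j$, it uses the algebraic identity $\sum_{u\geq j}\int_{Q_u}b\ud\mu = \frac{1}{2^N-j}\sum_{u\geq j}\sum_{\ell\geq j,\,\ell\neq u}\int_{Q_\ell}b\ud\mu$ together with the triangle inequality to find one more cube $Q_j$ whose removal preserves the bound; the ordering is thus constructed greedily, one cube at a time, working directly with the complex modulus. You instead fix the whole ordering at once: rotate by the phase $\beta$ so that the relevant quantity becomes a sum of reals, sort those reals, and invoke the elementary fact that the smallest $k-1$ of $2^N$ sorted numbers contribute at most $(k-1)/2^N$ of the total, concluding with $\abs{z}\geq\Re(\bar\beta z)$. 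Both proofs are short and elementary; yours is arguably more transparent since it replaces the inductive extraction and the $\frac{2^N-j}{2^N-j+1}\cdot\frac{2^N-j+1}{2^N}$ telescoping by a single sorting step.

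One small inaccuracy that is harmless but should be corrected: you assert $\mu(Q_u)=2^{-N}\mu(Q)$, which is false for a general (in particular, non-doubling) measure $\mu$ — the subcubes can carry wildly unequal $\mu$-mass, and indeed some may have $\mu(Q_u)=0$. Fortunately, the chain $\abs{c_u}\leq\mu(Q_u)$ is never used in your argument: the only inputs are $\sum_u c_u=\abs{\alpha}\geq\delta\mu(Q)$, the sorted-averages inequality, and $\abs{z}\geq\Re(\bar\beta z)$. So you should simply delete that clause rather than try to repair it.
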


\begin{proof}
The case \(k=1\) is fine for any ordering of the subcubes. Let us assume that we have an indexing of the cubes \(Q_1,\ldots,Q_{j-1}\) so that \eqref{eq:subaccretive} holds for all \(k=1,\ldots,j<2^N\). In particular
\begin{equation*}\begin{split}
  [1-(j-1)2^{-N}]\delta\mu(Q) &\leq\Babs{\sum_{u=j}^{2^N}\int_{Q_u}b\ud\mu}
  =\frac{1}{2^N-j}\Babs{\sum_{u=j}^{2^N}\sum_{\ell=j,\ell\neq u}^{2^N}\int_{Q_{\ell}}b\ud\mu} \\
  &\leq\frac{2^N-(j-1)}{2^N-j}\max_{u\in\{j,\ldots,2^N\}}\Babs{\int_{\bigcup_{\ell=j}^{2^N}Q_{\ell}\setminus Q_u}b\ud\mu}.
\end{split}\end{equation*}
It follows that for at least one \(u\in\{j,\ldots,2^N\}\), we have
\begin{equation*}
  \Babs{\int_{\bigcup_{\ell=j}^{2^N}Q_{\ell}\setminus Q_u}b\ud\mu}\geq[1-j2^{-N}]\delta\mu(Q).
\end{equation*}
By reordering the remaining cubes, we may assume that \(u=j\), and then we have fixed an indexing of the cubes \(Q_1,\ldots,Q_j\) so that \eqref{eq:subaccretive} holds for all \(k=1,\ldots,j+1\). Thus the claim follows by induction.
\end{proof}

Let the indexing of the subcubes \(Q_u\) henceforth be the one provided by the Lemma. Let \(\hat{Q}_k:=\bigcup_{u=k}^{2^N}Q_u\), so in particular \(\hat{Q}_1=Q\) and \(\hat{Q}_{2^N}=Q_{2^N}\), and the Lemma implies that
\(\mu(\hat{Q}_k)\gtrsim\mu(Q)\) (while ``\(\leq\)'' is obvious), since \(b\) is bounded.

One obtains the splitting
\begin{equation*}\begin{split}
  \D^b_Q
  &=\Exp^b_{\{Q_1,\ldots,Q_{2^N}\}}-\Exp^b_Q \\
  &=\sum_{u=1}^{2^N-1}[\Exp^b_{\{Q_1,\ldots,Q_u,\hat{Q}_{u+1}\}}-\Exp^b_{\{Q_1,\ldots,Q_{u-1},\hat{Q}_u\}}]
  =:\sum_{u=1}^{2^N-1}\D^b_{Q,u}.
\end{split}\end{equation*}

Now take a closer look at \(\D^b_{Q,u}\); the abbreviation \(f(A):=\int_A f\ud\mu\) will be used, with the same convention for \(b\) in place of \(f\). Assume that \(\mu(Q_u)>0\).
\begin{equation*}\begin{split}
  \D^b_{Q,u}f &=(\Exp^b_{Q_u}+\Exp^b_{\hat{Q}_{u+1}}-\Exp^b_{\hat{Q}_{u}})f \\
   &=b\Big(1_{Q_u}\frac{f(Q_u)}{b(Q_u)}+1_{\hat{Q}_{u+1}}\frac{f(\hat{Q}_{u+1})}{b(\hat{Q}_{u+1})}
        -1_{Q_u\cup\hat{Q}_{u+1}}\frac{f(Q_u)+f(\hat{Q}_{u+1})}{b(Q_u)+b(\hat{Q}_{u+1})}\Big) \\
   &=b\Big(\frac{1_{Q_u}}{b(Q_u)}-\frac{1_{\hat{Q}_{u+1}}}{b(\hat{Q}_{u+1})}\Big)
     \frac{b(Q_u)b(\hat{Q}_{u+1})}{b(\hat{Q}_u)}
     \int\Big(\frac{1_{Q_u}}{b(Q_u)}-\frac{1_{\hat{Q}_{u+1}}}{b(\hat{Q}_{u+1})}\Big)f\ud\mu \\
   &=:b\varphi^b_{Q,u}\int\varphi^b_{Q,u}f\ud\mu,
\end{split}\end{equation*}
where
\begin{equation*}
  \varphi^b_{Q,u}:=
  \sqrt{\frac{b(Q_u)b(\hat{Q}_{u+1})}{b(\hat{Q}_u)}}
  \Big(\frac{1_{Q_u}}{b(Q_u)}-\frac{1_{\hat{Q}_{u+1}}}{b(\hat{Q}_{u+1})}\Big);
\end{equation*}
the choice of the sign of the (in general complex) square root above is irrelevant and may be made arbitrarily.

If \(\mu(Q_u)=0\), then \(\D^b_{Q,u}=0\), and one may define \(\varphi^b_{Q,u}:=0\). The following lemma collects several basic properties of the functions \(\varphi^b_{Q,u}\) which are straightforward consequences of the previous considerations.

\begin{proposition}\label{prop:Haar}
The ``Haar'' functions satisfy
\begin{equation*}
  \int b\varphi^b_{Q,u}\ud\mu=0,
\end{equation*}
and if \(\varphi^b_{Q,u}\not\equiv 0\), then
\begin{equation*}
  \abs{\varphi^b_{Q,u}}\eqsim 
  \sqrt{\mu(Q_u)}\Big(\frac{1_{Q_u}}{\mu(Q_u)}+\frac{1_{\hat{Q}_{u+1}}}{\mu(Q)}\Big).
\end{equation*}
Hence
\begin{equation*}
  \Norm{\varphi^b_{Q,u}}{L^p(\mu)}\eqsim\mu(Q_u)^{1/p-1/2},\qquad p\in[1,\infty],
\end{equation*}
and in particular
\begin{equation*}
  \Norm{\varphi^b_{Q,u}}{L^1(\mu)}\Norm{\varphi^b_{Q,u}}{L^{\infty}(\mu)}\eqsim 1.
\end{equation*}
\end{proposition}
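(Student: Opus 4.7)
The plan is to read off all four claims from the explicit formula for $\varphi^b_{Q,u}$ displayed just above the proposition, combined with the pointwise size estimates $|b(A)| \eqsim \mu(A)$ that are available here.

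First, the cancellation identity $\int b\varphi^b_{Q,u}\ud\mu = 0$ is immediate: the bracketed difference $1_{Q_u}/b(Q_u) - 1_{\hat Q_{u+1}}/b(\hat Q_{u+1})$, integrated against $b$, yields $b(Q_u)/b(Q_u) - b(\hat Q_{u+1})/b(\hat Q_{u+1}) = 0$. This is the easy part and does not depend on any size information on $b$.

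Next I would establish $|b(A)| \eqsim \mu(A)$ for each of the three sets $A \in \{Q_u, \hat Q_{u+1}, \hat Q_u\}$ appearing in the formula. The upper bound $|b(A)| \leq \mu(A)$ is immediate from $\|b\|_\infty \leq 1$. For the lower bounds, the cube $Q_u$ is handled directly by the weak accretivity assumption $|b(Q_u)| \geq \delta \mu(Q_u)$; the unions $\hat Q_u$ and $\hat Q_{u+1}$ are \emph{not} cubes, so weak accretivity does not apply directly, but the preceding lemma guarantees $|b(\hat Q_k)| \geq [1-(k-1)2^{-N}]\delta \mu(Q) \gtrsim \mu(Q)$ for $k \in \{u, u+1\}$ (with $k \leq 2^N-1$, since $u \leq 2^N-1$), while the trivial upper bound gives $\mu(\hat Q_k) \leq \mu(Q)$. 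Here is the one subtle point the reader should notice: the whole indexing construction in the previous lemma exists precisely to rescue this lower bound on \emph{truncated} unions of subcubes, for which weak accretivity is silent.

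With these size bounds in hand, the pointwise estimate (2) is a direct calculation. The normalizing square-root prefactor has modulus $\eqsim \sqrt{\mu(Q_u)\mu(Q)/\mu(Q)} = \sqrt{\mu(Q_u)}$. Because $Q_u$ and $\hat Q_{u+1}$ are disjoint, the modulus of the bracketed difference of two indicators is just the sum of their moduli, which is $\eqsim 1_{Q_u}/\mu(Q_u) + 1_{\hat Q_{u+1}}/\mu(Q)$. Multiplying gives the claimed two-sided bound.

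Finally, the norm estimates (3) and (4) follow by integrating (2). Directly,
\[
  \int |\varphi^b_{Q,u}|^p \ud\mu \eqsim \mu(Q_u)^{p/2}\Bigl(\mu(Q_u)^{1-p} + \mu(\hat Q_{u+1})\mu(Q)^{-p}\Bigr),
\]
and since $\mu(\hat Q_{u+1}) \leq \mu(Q)$ and $\mu(Q_u) \leq \mu(Q)$ with $p \geq 1$, the first summand dominates, giving $\mu(Q_u)^{1-p/2}$; for $p = \infty$ the pointwise bound directly gives $\|\varphi^b_{Q,u}\|_\infty \eqsim \mu(Q_u)^{-1/2}$. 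The product relation in (4) is then the special case $p=1$ times $p=\infty$ of (3). I anticipate no substantive obstacle; the only point requiring care, as noted above, is invoking the preceding lemma rather than weak accretivity for the non-cubic sets $\hat Q_u$ and $\hat Q_{u+1}$.
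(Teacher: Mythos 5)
Your proof is correct and follows exactly the approach the paper intends when it declares these properties ``straightforward consequences of the previous considerations'': read off the explicit formula for $\varphi^b_{Q,u}$, invoke the preceding lemma for $|b(\hat Q_k)|\gtrsim\mu(Q)$, and compute. One trivial slip in your parenthetical: since $u$ runs up to $2^N-1$, the index $k=u+1$ can equal $2^N$, not merely $2^N-1$; this is harmless because the lemma covers $k=1,\dots,2^N$, giving the lower bound $2^{-N}\delta\mu(Q)$ in the extreme case.
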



\section{Random dyadic systems; good and bad cubes}\label{sec:random}

In this section I give a convenient parameterization of the dyadic systems as considered above, and use this to introduce a probability distribution on the collection of all such dyadic systems. The construction is equivalent to that used by Nazarov, Treil and Volberg (\cite{NTV:Tb}, Sec.~9.1), but it will be given in a somewhat different and hopefully transparent way.

Let \(\mathscr{D}^0\) denote the standard dyadic system consisting of all \(2^k(n+[0,1[^N)\), where \(k\in\Z\) and \(n\in\Z^N\). A general dyadic system \(\mathscr{D}\) has been defined as a collection \(\mathscr{D}=\bigcup_{k\in\Z}\mathscr{D}_k\) where \(\mathscr{D}_k=x_k+\mathscr{D}^0_k\) for some \(x_k\in\R^N\) and in addition the partition \(\mathscr{D}_k\) refines \(\mathscr{D}_{k+1}\).

There is obviously some redundancy in the choice of \(x_k\), since only its value modulo \(2^k\) (in each coordinate) is relevant. Thus, without loss of generality, it may be assumed that \(x_k\in[0,2^k[^N\). On the other hand, the condition that \(\mathscr{D}_k\) refine \(\mathscr{D}_{k+1}\) can be rephrased as \(x_k\equiv x_{k+1}\mod 2^k\), or in other words \(x_{k+1}=x_k+\beta_k 2^k\) for some \(\beta_k\in\{0,1\}^N\). It follows by iteration that
\begin{equation*}
  x_k=\sum_{j<k}\beta_j 2^j,\qquad\beta_j\in\{0,1\}^N.
\end{equation*}
Hence the whole system \(\mathscr{D}\) can be thought of as a shift of the standard system, \(\mathscr{D}=\mathscr{D}^0+\beta\), where \(\beta\) is the formal power series \(\beta=\sum_{j\in\Z}\beta_j 2^j\), and it is understood that a truncation modulo \(2^k\) of this series is first made before computing the shift \(Q+\beta:=Q+\sum_{j<k}\beta_j 2^j\) for \(Q\in\mathscr{D}^0_k\).

Now that all dyadic systems have been parameterized by \(\beta\in(\{0,1\}^N)^{\Z}\), there is an obvious way to interpret a ``random dyadic system'' by assigning the natural product probability on \((\{0,1\}^N)^{\Z}\) so that the coordinate functions \(\beta_j\) are independent and \(\prob(\beta_{j}=\eta)=2^{-N}\) for all \(\eta\in\{0,1\}^N\). (Actually, since it was required that \(x_k=\sum_{j<k}\beta_j2^{-j}\in[0,2^k[^N\), one should exclude the sequences \(\beta\) with the following property: for some \(k\in\Z\) and \(i\in\{1,\ldots,N\}\), the \(i\)th coordinate of \(\beta_{j}\) equals \(1\) for all \(j<k\). But this does not affect any of the probabilistic statements, since this kind of sequences have probability zero.)

Note that the formal shift parameter \(\beta=\sum_{j\in\Z}\beta_j 2^j\) cannot in general be replaced by real shift by some vector \(x\in\R^N\), and in fact the dyadic systems \(x+\mathscr{D}^0\) have vanishing probability among all dyadic systems. One can show that \(\mathscr{D}=\beta+\mathscr{D}^0\) is of the mentioned special form if and only if there is a \(k\in\Z\) and \(\eta\in\{0,1\}^N\) such that \(\beta_j\equiv \eta\) for all \(j>k\), and clearly this kind of sequences have zero probability among all \((\beta\in\{0,1\}^N)^{\Z}\).

I next recall the notion of singular cubes, essentially following \cite{NTV:Tb}, Def.~7.2. This involves two auxiliary parameters
\begin{equation*}
  \gamma:=\frac{\alpha}{2(\alpha+d)}
\end{equation*}
and (a large) \(r\in\Z_+\), which will be chosen later.  It is required that, at least,
\begin{equation}\label{eq:rAtLeast}
  2^{r(1-\gamma)}\geq 4\lambda,
\end{equation}
where $\lambda$ is the parameter of the $\BMO^p_{\lambda}$ spaces in the assumptions.

A pair of cubes \(\{Q,R\}\) with \(\ell(Q)\leq\ell(R)\) is called singular if
\begin{equation*}
  \dist(Q,\partial R)\leq\ell(Q)^{\gamma}\ell(R)^{1-\gamma}
\end{equation*}
when \(S=R\) or when \(S\) is any one the \(2^N\) dyadic subcubes of \(R\) with \(\ell(S)=\frac{1}{2}\ell(R)\). The pair \(\{Q,R\}\) is called essentially singular if, in addition, \(\ell(Q)\leq 2^{-r}\ell(R)\).

Given two dyadic systems \(\mathscr{D}\) and \(\tilde{\mathscr{D}}\), a cube \(Q\in\mathscr{D}\) is called \emph{bad} with respect to \(\tilde{\mathscr{D}}\) if there exists an \(R\in\tilde{\mathscr{D}}\) with \(\ell(R)\geq\ell(Q)\) such that the pair \(\{Q,R\}\) is essentially singular, and otherwise it is called \emph{good}. Just like in \cite{NTV:Tb}, I will consider martingale difference decompositions with respect to two independent random dyadic systems $\mathscr{D}=\mathscr{D}^{\beta}$ and $\mathscr{D}'=\mathscr{D}^{\beta'}$, where $\beta,\beta'\in(\{0,1\}^N)^{\Z}$. There will be a separate treatment for the good and bad cubes in these systems but, deviating slightly from \cite{NTV:Tb}, the relevant badness of $Q\in\mathscr{D}$ will be defined not directly with respect to $\mathscr{D}'$, but with respect to a modification thereof.

To this end, let us consider two more random binary sequences $\tilde{\beta}$ and $\tilde{\beta}'$. Then I declare that $Q\in\mathscr{D}$ is bad if it is bad with respect to the dyadic system $\mathscr{D}_Q'$ parameterized by the binary sequence $(\tilde{\beta}_j')_{2^j<\ell(Q)}\cup(\beta_j')_{2^j\geq\ell(Q)}$, and similarly $R\in\mathscr{D}'$ is bad if it is bad with respect to the system $\mathscr{D}_R$ parameterized by $(\tilde{\beta}_j)_{2^j<\ell(Q)}\cup(\beta_j)_{2^j\geq\ell(Q)}$. The notation \(\mathscr{D}_{\bad}\), \(\mathscr{D}_{\good}\) will be used for the bad and good subcollections of \(\mathscr{D}\), similarly for \(\mathscr{D}'\).

The advantage of this definition is that the badness or goodness of any $Q\in\mathscr{D}$ is independent of the positions of the relatively smaller cubes $R\in\mathscr{D}'$, $\ell(R)\leq\ell(Q)$, which depend on $(\beta_j')_{2^j<\ell(R)}$ only. This has the following useful consequence:

\begin{lemma}\label{lem:underExpectations}
Let $\phi(Q,R)$ be a function depending on two cubes $Q$ and $R$. With $\beta'$ and $\tilde\beta'$ fixed, under a random choice of $\beta$ and $\tilde\beta$, the expectations of the series,
\begin{equation*}
  \Exp_{\beta}\sum_{R\in\mathscr{D'}}\sum_{\substack{Q\in\mathscr{D} \\ \ell(Q)\leq\ell(R)}}\phi(Q,R)\qquad\text{and}\qquad
  \Exp_{\beta\tilde\beta}\sum_{R\in\mathscr{D}'_{\good}}\sum_{\substack{Q\in\mathscr{D} \\ \ell(Q)\leq\ell(R)}}\phi(Q,R)
\end{equation*}
differ only by a multiplicative factor depending on the parameters $r$ and $\gamma$.
\end{lemma}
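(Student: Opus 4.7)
The plan is to rewrite $\sum_{R\in\mathscr{D}'_\good}=\sum_{R\in\mathscr{D}'}1_{R\in\mathscr{D}'_\good}$ and exploit the scale-by-scale independence built into the modified notion of badness via the auxiliary copies $\tilde\beta,\tilde\beta'$. Since $\beta'$ and $\tilde\beta'$ are fixed, the collection $\mathscr{D}'$ is deterministic, so by Fubini (legitimate under the absolute summability that will hold in the applications to \eqref{eq:matrixT})
\[
\Exp_{\beta\tilde\beta}\sum_{R\in\mathscr{D}'_\good}\sum_{\substack{Q\in\mathscr{D}\\ \ell(Q)\leq\ell(R)}}\phi(Q,R)
=\sum_{R\in\mathscr{D}'}\Exp_{\beta\tilde\beta}\Big[1_{R\in\mathscr{D}'_\good}\sum_{\substack{Q\in\mathscr{D}\\ \ell(Q)\leq\ell(R)}}\phi(Q,R)\Big].
\]
It then suffices to show that for each fixed $R\in\mathscr{D}'$ the inner expectation factors as $p\cdot\Exp_\beta\sum_Q\phi(Q,R)$ with a constant $p=p(r,\gamma)>0$ that is the same for every $R$.

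For the factorisation, write $\ell(R)=2^m$ and track which among the random coordinates enter each piece. The system $\mathscr{D}_R$ is parameterised by the sequence $\sigma=(\tilde\beta_j)_{j<m}\cup(\beta_j)_{j\geq m}$, and the event $\{R\in\mathscr{D}'_\good\}$ asserts a non-singularity property of the partitions of $\mathscr{D}_R$ at levels $\ell\geq m+r-1\geq m$ only (the large cubes $Q'$ with $\ell(Q')\geq 2^{r+m}$ together with their dyadic halves). Through the explicit formula $y_\ell=\sum_{j<\ell}\sigma_j 2^j$, those partitions depend precisely on $(\tilde\beta_j)_{j<m}\cup(\beta_j)_{j\geq m}$. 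On the other hand, any $Q\in\mathscr{D}_k$ with $k\leq m$ is determined by $(\beta_j)_{j<k}\subseteq(\beta_j)_{j<m}$, and hence so is the inner sum $\sum_Q\phi(Q,R)$. The two blocks $\{(\tilde\beta_j)_{j<m},(\beta_j)_{j\geq m}\}$ and $\{(\beta_j)_{j<m}\}$ are disjoint and so independent under the product law, yielding
\[
\Exp_{\beta\tilde\beta}\Big[1_{R\in\mathscr{D}'_\good}\sum_Q\phi(Q,R)\Big]
=\Exp[1_{R\in\mathscr{D}'_\good}]\cdot\Exp_\beta\sum_Q\phi(Q,R).
\]

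To conclude, I would check that $p_R:=\Exp[1_{R\in\mathscr{D}'_\good}]$ is the same for every $R$. For each $\ell\geq m+r-1$, the offset $y_\ell\bmod 2^\ell$ is uniformly distributed on $[0,2^\ell)^N$, so the essential-singularity event between $R$ and some level-$\ell$ cube of $\mathscr{D}_R$ (or one of its dyadic halves) depends only on the uniform relative position $x_R-y_\ell\bmod 2^\ell$ and on the scale gap $\ell-m$ together with $\gamma,d,N$. Summing the resulting probabilities over $\ell\geq m+r$ produces a total badness probability $1-p$ with $p=p(r,\gamma)\in(0,1]$; this is precisely the standard Nazarov--Treil--Volberg estimate, and \eqref{eq:rAtLeast} with $r$ large enough guarantees $p>0$. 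The main technical obstacle is the careful bookkeeping in the preceding paragraph: one must verify that the coarse partitions of $\mathscr{D}_R$ do not surreptitiously involve $(\beta_j)_{j<m}$ through the recursion $x_{\ell+1}=x_\ell+\sigma_\ell 2^\ell$, and symmetrically that cubes $Q\in\mathscr{D}_k$ with $k\leq m$ do not feel $(\beta_j)_{j\geq m}$. Both facts are immediate from the closed-form $y_\ell=\sum_{j<\ell}\sigma_j 2^j$, but are easy to muddle because two independent copies of binary sequences are simultaneously in play and their roles must be kept strictly separated.
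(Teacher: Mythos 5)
Your proposal is correct and follows essentially the same route as the paper: the proof hinges on the same independence, namely that the event $\{R\in\mathscr{D}_{\good}'\}$ (with $\ell(R)=2^m$) depends only on the block $(\tilde\beta_j)_{j<m}\cup(\beta_j)_{j\geq m}$ while the positions of the cubes $Q$ with $\ell(Q)\leq\ell(R)$ depend only on the disjoint block $(\beta_j)_{j<m}$, so the expectation of the product factors. The paper presents the same computation in the opposite direction, starting from $\pi_{\good}$ times the unrestricted sum and invoking symmetry for the constancy of $\pi_{\good}$ where you work it out more explicitly via the uniform distribution of offsets.
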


Thus, under the expectations, we may take the bigger cube in such summations to be restricted to the good cubes or not, as we wish. Since $\beta'$ and $\tilde\beta'$, which determine the goodness of $Q\in\mathscr{D}$, are kept fixed, the result also remains true if $Q\in\mathscr{D}$ is replaced by $Q\in\mathscr{D}_{\good}$ in both inner sums; this amounts to replacing $\phi(Q,R)$ by $1_{\good}(Q)\phi(Q,R)$, which is just another function of $Q$ and $R$.

\begin{proof}
Observe that, by reasons of symmetry, the probability of goodness
\begin{equation*}
  \pi_{\good}:=\prob_{\beta\tilde\beta}(R\in\mathscr{D}_{\good}')=\Exp_{\beta\tilde\beta} 1_{\good}(R),
\end{equation*}
is independent of the particular $R\in\mathscr{D}_{\good}'$. Using the parameterization in terms of $\mathscr{D}^0$, so that there is no randomness in the summation variable, one may compute
\begin{equation*} 
\begin{split}
  &\pi_{\good}\Exp_{\beta\tilde\beta}\sum_{R\in\mathscr{D'}}\sum_{\substack{Q\in\mathscr{D} \\ \ell(Q)\leq\ell(R)}}\phi(Q,R) \\
  &=\sum_{R\in\mathscr{D}'}\sum_{\substack{Q\in\mathscr{D}^0 \\ \ell(Q)\leq\ell(R)}}
          \Exp_{\beta\tilde\beta} 1_{\good}^{\beta\tilde\beta}(R)
          \Exp_{\beta\tilde\beta}\phi(Q+\beta,R),
\end{split}
\end{equation*}
where the dependence of the goodness of $R$ on $\beta$ and $\tilde\beta$ has been indicated explicitly for clarity.
Here $Q+\beta$, and hence $\phi(Q+\beta,R)$, depends only on $\beta_j$ with $2^j<\ell(Q)$, whereas the goodness of $R$ depends on $\tilde\beta$ and $\beta_j$ for $2^j\geq\ell(R)\geq\ell(Q)$. Thus, by the product rule of expectations of independent quantities, the computation may be continued with
\begin{equation*} 
\begin{split}
  &=\sum_{R\in\mathscr{D}'}\sum_{\substack{Q\in\mathscr{D}^0 \\ \ell(Q)\leq\ell(R)}}
          \Exp_{\beta\tilde\beta} \big(1_{\good}^{\beta\tilde\beta}(R)\phi(Q+\beta,R)\big) \\
   &=\Exp_{\beta\tilde\beta}\sum_{R\in\mathscr{D}'_{\good}}\sum_{\substack{Q\in\mathscr{D} \\ \ell(Q)\leq\ell(R)}}\phi(Q,R),
\end{split}
\end{equation*}
and this is what was claimed.
\end{proof}

The modified notion of good and bad is still reasonably close to the original one, in the following sense:

\begin{lemma}\label{lem:newGood}
If $Q\in\mathscr{D}_{\good}$ and $R\in\mathscr{D}'$ with $\ell(R)\geq 2^r\ell(Q)$, then
\begin{equation*}
  \dist(Q,\partial R)\geq\frac12\ell(Q)^{\gamma}\ell(R)^{1-\gamma}.
\end{equation*}
\end{lemma}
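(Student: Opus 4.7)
The plan is to transfer the ``good'' property of $Q$, which is defined with respect to the auxiliary system $\mathscr{D}_Q'$, into a statement about the actual cube $R\in\mathscr{D}'$ by comparing the two grids at scale $\ell(R)=2^k$. Writing $\ell(Q)=2^\ell$, so that $k\geq\ell+r$, both $\mathscr{D}_k'$ and $\mathscr{D}_{Q,k}'$ are translates of the standard partition $\mathscr{D}^0_k$ by shift vectors $x_k$ and $x_k'$ that differ only in their components $\beta_j'$ versus $\tilde{\beta}_j'$ for $2^j<\ell(Q)$; consequently each coordinate of $y:=x_k-x_k'$ is bounded in absolute value by $\ell(Q)$.

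The key geometric step is as follows. Any $p\in\partial R$ lies on a hyperplane $\{x_i=a\}$ with $a\in x_k^{(i)}+2^k\Z$. Shifting \emph{only} the $i$-th coordinate by $-(x_k^{(i)}-x_k^{'(i)})$ produces a point $p^*$ with $p_i^*\in x_k^{'(i)}+2^k\Z\subset x_{k-1}^{'(i)}+2^{k-1}\Z$; hence $p^*$ lies on the finer grid $\partial\mathscr{D}_{Q,k-1}'$, and in particular on $\partial\tilde{S}$ for some $\tilde{S}\in\{R^*\}\cup\{\text{half-subcubes of }R^*\}$ with $R^*\in\mathscr{D}_{Q,k}'$. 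Since $\ell(R^*)=2^k\geq 2^r\ell(Q)$ and $Q$ is good, the pair $\{Q,R^*\}$ is not essentially singular---and hence, given the size ratio, not singular---so $\dist(Q,\partial\tilde{S})>A:=\ell(Q)^\gamma\ell(R)^{1-\gamma}$. Combined with $|p-p^*|<\ell(Q)$, the triangle inequality yields $\dist(Q,p)>A-\ell(Q)$.

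To conclude, the hypothesis $2^{r(1-\gamma)}\geq 4\lambda>2$ forces $(\ell(R)/\ell(Q))^{1-\gamma}\geq 2$, equivalently $2\ell(Q)\leq A$, whence $\dist(Q,p)>A/2$ uniformly in $p\in\partial R$, giving the desired bound on $\dist(Q,\partial R)$.

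The main obstacle I anticipate is precisely this coordinate-by-coordinate shifting. A naive attempt to compare $\partial R$ with $\partial R^*$ for the single cube $R^*:=R-y\in\mathscr{D}_{Q,k}'$ would move by the full vector $y$ and only produce a bound involving the Euclidean norm $|y|$, losing a $\sqrt{N}$ factor that is generally too lossy to absorb into $A/2$. The trick is to exploit the finer grid $\partial\mathscr{D}_{Q,k-1}'$---and in particular the half-subcube clause in the definition of ``singular''---which permits shifting each hyperplane of $\partial R$ in a single coordinate at a time by a displacement of magnitude $<\ell(Q)$, dovetailing precisely with the $A/2$ buffer supplied by taking $r$ large enough.
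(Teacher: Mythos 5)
Your proof is correct and follows the same general strategy as the paper's: compare $\mathscr{D}'$ with the auxiliary system $\mathscr{D}_Q'$, apply the goodness of $Q$ against the auxiliary grid, and absorb the translation error using \eqref{eq:rAtLeast}. Where you depart from the paper is in the level of care taken with the geometry. The paper simply sets $\tilde R := R+\sum_{2^j<\ell(Q)}2^j(\tilde\beta_j'-\beta_j')\in\mathscr{D}_Q'$ and writes $\dist(Q,\partial R)\geq\dist(Q,\partial\tilde R)-\ell(Q)$; with the Euclidean distance this step strictly speaking only gives $\dist(Q,\partial\tilde R)-\sqrt N\,\ell(Q)$, since the shift vector has $\ell^\infty$-norm at most $\ell(Q)$ but Euclidean norm up to $\sqrt N\,\ell(Q)$. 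Your coordinate-by-coordinate shift is precisely the device that removes this dimensional loss: a boundary point $p\in\partial R$ sits on a hyperplane $\{x_i=a\}$ with $a\in x_k^{(i)}+2^k\Z$, and moving only the $i$-th coordinate by $|y_i|<\ell(Q)$ lands it on the corresponding grid of the auxiliary system. So your anticipated obstacle is a genuine one, and your resolution is sound.

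Two small inaccuracies in your write-up, neither of which invalidates the argument. First, the shifted point $p^*$ already has $p^*_i\in x_k^{'(i)}+2^k\Z$, i.e.\ on the scale-$k$ grid of $\mathscr{D}_Q'$; you do not need to pass to the finer scale $k-1$ or to invoke the half-subcube clause in the definition of singularity---goodness of $Q$ against \emph{some} $S\in\mathscr{D}_{Q,k}'$ of side $\ell(R)\geq 2^r\ell(Q)$ is all that is required. Second, the specific claim that this $S$ can be taken to be $R^*:=R-y$ or one of its half-subcubes is not literally correct: shifting $p$ in only one coordinate need not land on $\partial R^*$, whose remaining coordinates were also translated. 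What you actually use (and what is true) is that $p^*\in\partial S$ for \emph{some} cube $S$ at that scale of the auxiliary system; since goodness is a statement quantified over all such cubes, the conclusion $\dist(Q,p^*)>\ell(Q)^\gamma\ell(R)^{1-\gamma}$ holds regardless of which $S$ occurs.
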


\begin{proof}
Under the assumptions, the cube $\tilde{R}:=R+\sum_{2^j<\ell(Q)}2^j(\tilde{\beta}_j'-\beta_j')$ belongs to $\mathscr{D}'(Q)$ and hence $\dist(Q,\partial\tilde{R})>\ell(Q)^{\gamma}\ell(R)^{1-\gamma}$. But then, using \eqref{eq:rAtLeast},
\begin{equation*}
\begin{split}
  \dist(Q,\partial R) &\geq \dist(Q,\partial\tilde{R})-\ell(Q) \\
   &>\ell(Q)^{\gamma}\ell(R)^{1-\gamma}(1-2^{-(1-\gamma)r})\geq\frac12\ell(Q)^{\gamma}\ell(R)^{1-\gamma}.\qedhere
\end{split}
\end{equation*}
\end{proof}

The main point of considering random dyadic systems is to be able to quantify the sense in which the bad cubes are rare.

\begin{lemma}\label{lem:NTV92}
Let \(\mathscr{D}\) and \(Q\in\mathscr{D}=\mathscr{D}^{\beta}\) be fixed, and choose \(\beta',\tilde{\beta}'\) randomly. Then
\begin{equation*}
  \prob_{\beta',\tilde\beta'}(Q\in\mathscr{D}_{\bad})\leq 2N\frac{2^{-r\gamma}}{1-2^{-\gamma}}.
\end{equation*}
\end{lemma}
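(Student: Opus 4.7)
The plan is to stratify the bad event by the scale of the offending larger cube and to bound each stratum by a geometric factor of order $2^{-k\gamma}$. For each integer $k \geq r$, let $E_k$ denote the event that some $R \in \mathscr{D}_Q'$ with $\ell(R) = 2^k \ell(Q)$ renders the pair $\{Q,R\}$ essentially singular; then $\{Q \in \mathscr{D}_{\bad}\} = \bigcup_{k \geq r} E_k$, and it suffices to bound $\prob_{\beta',\tilde\beta'}(E_k)$ by roughly $2N \cdot 2^{-k\gamma}$.

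The first simplification is purely geometric. For any such $R$, the union $\partial R \cup \bigcup_u \partial R_u$ over the $2^N$ dyadic half-size subcubes $R_u$ of $R$ is contained in the grid $\mathscr{G}_{k-1}'$ consisting of all hyperplanes of the level-$(k-1)$ partition of $\mathscr{D}_Q'$, because $\partial R$ already lies in the (finer) level-$(k-1)$ grid and the $\partial R_u$'s are exactly the additional level-$(k-1)$ hyperplanes inside $R$. Hence
\[
  E_k \subseteq \big\{\dist(Q, \mathscr{G}_{k-1}') \leq 2^{(1-\gamma)k}\ell(Q)\big\}.
\]

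Next I would analyze the distribution of this random grid. From the parameterization of $\mathscr{D}_Q'$,
\[
  (x_{k-1}')_i = \sum_{\substack{j < k-1 \\ 2^j \geq \ell(Q)}}(\beta_j')_i\, 2^j + \sum_{2^j < \ell(Q)} (\tilde{\beta}_j')_i\, 2^j
\]
is a dyadic series extending to $j \to -\infty$ with i.i.d.\ Bernoulli$(1/2)$ digits, so its reduction modulo $2^{k-1}\ell(Q)$ is uniformly distributed on $[0, 2^{k-1}\ell(Q))$ and independent across $i = 1,\ldots,N$. The projection of $Q$ onto any coordinate axis is an interval of length $\ell(Q)$, and the projected grid is a uniformly shifted arithmetic progression of spacing $2^{k-1}\ell(Q)$; the probability that this interval lies within distance $2^{(1-\gamma)k}\ell(Q)$ of the progression is therefore at most $\big(2 \cdot 2^{(1-\gamma)k} + 1\big)\ell(Q)/\big(2^{k-1}\ell(Q)\big) \lesssim 2^{-k\gamma}$, where the $\ell(Q)$-summand is absorbed using the largeness hypothesis~\eqref{eq:rAtLeast}.

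A union bound over the $N$ coordinate directions then gives $\prob(E_k) \lesssim N \cdot 2^{-k\gamma}$, and summing the geometric series $\sum_{k \geq r} 2^{-k\gamma} = 2^{-r\gamma}/(1-2^{-\gamma})$ delivers the desired bound. The main obstacle is bookkeeping: one must verify that splicing $\tilde{\beta}_j'$ (for $2^j < \ell(Q)$) with $\beta_j'$ (for $2^j \geq \ell(Q)$) in the definition of $\mathscr{D}_Q'$ really preserves the full i.i.d.\ Bernoulli structure needed to make the modular shift uniform---which is automatic because the four binary sequences $\beta, \beta', \tilde\beta, \tilde\beta'$ were declared independent at the outset---and to sharpen the per-coordinate probability so that the explicit constant $2N$, rather than some larger multiple of $N$, emerges.
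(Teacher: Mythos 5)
Your proposal is correct and supplies the argument that the paper merely cites: the paper's own proof of Lemma~\ref{lem:NTV92} consists of a reference to \cite{NTV:Tb}, Lemma~9.2, together with the one-line observation that $\mathscr{D}'_Q$ and $\mathscr{D}'$ are identically distributed (which you also address, in your final paragraph, via the independence of the four binary sequences). The stratification by the scale ratio $2^k$, the reduction of $E_k$ to proximity to the level-$(k-1)$ grid, the coordinate-wise union bound using the uniform modular distribution of the random shift, and the geometric summation over $k\geq r$ are exactly the ingredients of the cited NTV argument, so this is the same route, not a different one. Your per-coordinate estimate $(2\cdot 2^{(1-\gamma)k}+1)\ell(Q)/(2^{k-1}\ell(Q))$ does give $\lesssim 2^{-k\gamma}$ but, as you note, with a constant a few times larger than what is needed for the stated $2N$; this is harmless for the paper's purposes, since all that is ultimately used is that the bound tends to $0$ as $r\to\infty$ (see the remark following the lemma and the absorption step in the Synthesis section).
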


Note that the right side can be made smaller than any preassigned \(\epsilon>0\) with a sufficiently large choice of \(r\in\Z_+\).

\begin{proof}
This is \cite{NTV:Tb}, Lemma~9.2. The fact that the badness is defined with respect to $\mathscr{D}'(Q)$ rather than $\mathscr{D}'$ does not change the corresponding probability, since both these dyadic systems are identically distributed.
\end{proof}

Given \(Q\in\mathscr{D}\) and \(n\in\Z_+\), the expression \(Q^{(n)}\) denotes the dyadic ancestor of \(Q\) of the \(n\)th generation, i.e., it is the unique cube such that \(Q\subseteq Q^{(n)}\in\mathscr{D}\) and \(\ell(Q^{(n)})=2^n\ell(Q)\). For indicating the appropriate ancestor in a number of arguments below, it is convenient to introduce the following integer-valued function: for \(j=0,1,2,\ldots\), let
\begin{equation*}
  \theta(j):=\Bceil{\frac{j\gamma+r}{1-\gamma}},
\end{equation*}
where \(\ceil{x}\) is the first (i.e., smallest) integer bigger than or equal to \(x\).


\section{The tangent martingale trick}\label{sec:trick}

In this section I present the central tool for estimating the action on vector-valued random sums of various averaging-type integral operators, which will be encountered in the sequel. While the applications in the present paper will all be in the context of \(\R^N\) and its dyadic cubes, I decided to highlight the abstract nature of this argument by giving the result in a general \(\sigma\)-finite measure space \((E,\mathscr{M},\mu)\) having a refining sequence of partitions as follows: For each \(k\in\Z\), let \(\mathscr{A}_k\) be a countable partition of \(E\) into sets of finite positive measure so that \(\sigma(\mathscr{A}_k)\subseteq\sigma(\mathscr{A}_{k-1})\subseteq\mathscr{M}\), and let \(\mathscr{A}=\bigcup_{k\in\Z}\mathscr{A}_k\).

The basic idea of the tangent martingale trick is the following: Given functions \(f_A\) supported by the atoms \(A\), and of such a form that \(f_A\) is \(\sigma(\mathscr{A}_{k-1})\)-measurable whenever \(A\in\mathscr{A}_k\), these will be replaced by new functions, which have a simpler dependence on the variable \(x\in E\), being just multiples of the indicator \(1_A\), but they still contain all the original information, which is hidden in the dependence on a new variable \(y\).

To make the ``replacement'' precise, for each \(A\in\mathscr{A}\), let \(\nu_A\) denote the probability measure \(\mu(A)^{-1}\cdot\mu|_A\). Let \((F,\mathscr{N},\nu)\) be the space \(\prod_{A\in\mathscr{A}}A\) with the product \(\sigma\)-algebra and measure. Its points will be denoted by \(y=(y_A)_{A\in\mathscr{A}}\). Then the following norm equivalence holds:

\begin{theorem}\label{thm:trick}
If \(X\) is a UMD space and \(p\in(1,\infty)\), then
\begin{equation}\label{eq:trick}\begin{split}
  \iint_{\Omega\times E}
    &\Bnorm{\sum_{k\in\Z}\radem_k\sum_{A\in\mathscr{A}_k}f_A(x)}{X}^p\ud\prob(\radem)\ud\mu(x) \\
  &\eqsim\iiint_{\Omega\times E\times F}
    \Bnorm{\sum_{k\in\Z}\radem_k\sum_{A\in\mathscr{A}_k}1_A(x)f_A(y_A)}{X}^p\ud\prob(\radem)\ud\mu(x)\ud\nu(y)
\end{split}\end{equation}
\end{theorem}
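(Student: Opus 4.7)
My plan is to identify the two sides of~\eqref{eq:trick} as $L^p$-norms of random series that form a tangent pair of martingale differences in the sense of McConnell~\cite{McConnell}, and then invoke his decoupling inequality for UMD-valued martingales.

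For $x\in E$ and $k\in\Z$ write $A_k(x)\in\mathscr{A}_k$ for the unique atom containing $x$; the refinement hypothesis gives $A_{k-1}(x)\subseteq A_k(x)$. Disjointness of the atoms at each scale collapses the inner sums in~\eqref{eq:trick} to single terms, reducing the desired equivalence to
\[
  \BNorm{\sum_{k\in\Z}\radem_k f_{A_k(x)}(x)}{L^p(\Omega\times E;X)}
  \eqsim
  \BNorm{\sum_{k\in\Z}\radem_k f_{A_k(x)}(y_{A_k(x)})}{L^p(\Omega\times E\times F;X)}.
\]
Passing from the left to the right replaces the common evaluation point $x\in A_k(x)$---shared across every $k$---by mutually independent uniform samples $y_{A_k(x)}\in A_k(x)$, one per atom of the nested chain. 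I then introduce on $E\times\Omega$ the reverse-time filtration $\mathscr{F}_k:=\sigma(\mathscr{A}_k)\vee\sigma(\radem_j:j\geq k+1)$ and on $E\times\Omega\times F$ its enlargement $\mathscr{G}_k:=\mathscr{F}_k\vee\sigma(y_A:A\in\mathscr{A}_j,\,j\geq k+1)$, both decreasing in $k$; the relabelling $k\mapsto -k$ converts them into standard forward-time filtrations. Setting $d_k:=\radem_k f_{A_k(x)}(x)$ and $\tilde d_k:=\radem_k f_{A_k(x)}(y_{A_k(x)})$, the hypothesis that $f_A$ is $\sigma(\mathscr{A}_{k-1})$-measurable for $A\in\mathscr{A}_k$ yields $\mathscr{F}_{k-1}$- (resp.~$\mathscr{G}_{k-1}$-) measurability of $d_k$ (resp.~$\tilde d_k$), and centering and independence of $\radem_k$ give $\Exp[d_k\mid\mathscr{F}_k]=\Exp[\tilde d_k\mid\mathscr{G}_k]=0$, making both sequences martingale differences.

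Tangency of the pair is the heart of the argument. Conditionally on $\mathscr{F}_k$ one knows $A_k(x)$ but not the sub-atom $A_{k-1}(x)$, so the residual randomness in $d_k$ consists of the fair sign $\radem_k$ and the position of $x$ within $A_k(x)$, which is $\nu_{A_k(x)}:=\mu|_{A_k(x)}/\mu(A_k(x))$-distributed by the definition of the conditional expectation; conditionally on $\mathscr{G}_k$, the residual randomness in $\tilde d_k$ is $\radem_k$ together with $y_{A_k(x)}$, and the latter was constructed to be $\nu_{A_k(x)}$-distributed and independent of $\mathscr{G}_k$. Hence the two conditional distributions coincide, so the sequences are tangent. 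Moreover, conditionally on $\sigma(\mathscr{A}_k:k\in\Z)$, the terms $\tilde d_k$ are mutually independent, since the pairs $(\radem_k,y_{A_k(x)})$ are independent across $k$ (distinct $k$ yield distinct atoms $A_k(x)$). This is exactly the setting of McConnell's decoupling theorem, whose two-sided estimate yields~\eqref{eq:trick}.

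The main technical obstacle is the tangency verification together with the bookkeeping of the reverse-time filtration; convergence of the doubly-infinite series in $L^p$ is handled routinely by truncation to a finite range of $k$ and passage to the limit using the just-established uniform inequality.
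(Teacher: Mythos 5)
Your proposal is correct and takes essentially the same route as the paper: both arguments identify the two random sums as a tangent pair of martingale differences with respect to the (reverse-time) filtration $\mathscr{G}_k$ on $\Omega\times E\times F$, verify the tangency by the same conditional-distribution computation, and then invoke McConnell's two-sided decoupling inequality. One small remark: since tangency is a notion for two sequences adapted to a \emph{single} filtration on a \emph{single} probability space, you should lift $d_k$ to $\Omega\times E\times F$ and phrase both conditional laws with respect to the common $\sigma$-algebra $\mathscr{G}_k$ (which gives the same result, as $d_k$ and $\mathscr{F}_k$ are independent of the extra $y$-coordinates); the paper does precisely this with a single filtration. Your explicit verification of the conditional-independence (CI) property of $(\tilde d_k)$ given $\sigma(\mathscr{A}_k:k\in\Z)$, which the paper leaves implicit, is a welcome clarification, since the two-sided estimate for UMD spaces is stated for \emph{decoupled} tangent sequences rather than for arbitrary tangent pairs.
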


\begin{proof}
This is a version of McConnell's \cite{McConnell} Theorem~2.2 for tangent martingale difference sequences. Consider the space $\Omega\times E\times F$, and identify any function, subset or collection of sets on one of the components with a similar object lifted to the product space in the usual way; so for example $\mathscr{A}_k$ is identified with $\Omega\times\mathscr{A}_k\times F$.
Then consider the functions
\begin{equation*}
  d_k(\radem,x,y):=\radem_k\sum_{A\in\mathscr{A}_k}f_A(x),\quad
  e_k(\radem,x,y):=\radem_k\sum_{A\in\mathscr{A}_k}1_A(x)f_A(y_A),
\end{equation*}
and the \(\sigma\)-algebras \(\mathscr{F}_k:=\sigma(\{\radem_j,y_A:A\in\mathscr{A}_j,j\geq k\},\mathscr{A}_{k-1})\). Then both \(d_k\) and \(e_k\) are \(\mathscr{F}_k\)-measurable and, because of the \(\radem_k\) factor,
\begin{equation*}
  \Exp[d_k|\mathscr{F}_{k+1}]=\Exp[e_k|\mathscr{F}_{k+1}]=0,
\end{equation*}
i.e., they form martingale difference sequences. Moreover, they satisfy the following tangent property: their conditional distributions on \(\mathscr{F}_{k+1}\) coincide, i.e.,
\begin{equation*}
  \Exp[1_{\{d_k\in D\}}|\mathscr{F}_{k+1}]=\Exp[1_{\{e_k\in D\}}|\mathscr{F}_{k+1}]
\end{equation*}
for all Borel sets \(D\subseteq X\). In fact, computing  the conditional expectations is easy in the present case, since this amounts to fixing the variables \(\radem_j\) and \(y_A\), for \(A\in\mathscr{A}_j\) and \(j>k\) (they do not appear in \(d_k\) nor \(e_k\), so this amounts to nothing), and computing the average over \(x\in A\) for every \(A\in\mathscr{A}_k\). But
\begin{equation*}\begin{split}
  &\prob\otimes\mu\otimes\nu(\{d_k\in D\}\cap\{x\in A\})=\prob\otimes\mu(\{\radem_k f_A(x)\in D\}),\\
  &\prob\otimes\mu\otimes\nu(\{e_k\in D\}\cap\{x\in A\})=\prob\otimes\nu(\{\radem_k f_A(y_A)\in D\})\cdot\mu(A),
\end{split}\end{equation*}
which obviously coincide by the definition of \(\nu\).

Hence McConnell's inequality applies. To be precise, he only formulates it in the case of a finite (probability) measure space, rather than a \(\sigma\)-finite one, but one immediately checks that his argument works in the present context as well.
\end{proof}

The main application of the theorem will be via the following consequence, where the auxiliary measure space \(F\) has disappeared:

\begin{corollary}\label{cor:trick}
Let \(X\) be a UMD space and \(p\in(1,\infty)\). For each \(A\in\mathscr{A}\), let \(k_A:A\times A\to\C\) be a jointly measurable function pointwise bounded by \(1\). Then
\begin{equation}\label{eq:trickCor}\begin{split}
  \iint_{\Omega\times E}
    &\Bnorm{\sum_{k\in\Z}\radem_k\sum_{A\in\mathscr{A}_k}
    \frac{1_A(x)}{\mu(A)}\int_A k_A(x,z)f_A(z)\ud\mu(z)}{X}^p\ud\prob(\radem)\ud\mu(x) \\
  &\lesssim  \iint_{\Omega\times E}
    \Bnorm{\sum_{k\in\Z}\radem_k\sum_{A\in\mathscr{A}_k}
    f_A(x)}{X}^p\ud\prob(\radem)\ud\mu(x).
\end{split}\end{equation}
\end{corollary}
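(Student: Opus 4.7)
The plan is to reduce the estimate to Theorem~\ref{thm:trick} by absorbing the kernel $k_A$ via Jensen's inequality followed by the Kahane contraction principle. For each $x\in E$ and $k\in\Z$, let $A_k(x)$ denote the unique atom of $\mathscr{A}_k$ containing $x$. The key observation is that $\mu(A)^{-1}\mu|_A=\nu_A$ is exactly the probability measure used to construct $(F,\nu)=\prod_{A\in\mathscr{A}}(A,\nu_A)$. Hence for every $A\in\mathscr{A}$,
\begin{equation*}
  \frac{1_A(x)}{\mu(A)}\int_A k_A(x,z)f_A(z)\ud\mu(z)
   =\int_F 1_A(x)\,k_A(x,z_A)f_A(z_A)\ud\nu(z),
\end{equation*}
so that the expression inside the norm on the left side of (\ref{eq:trickCor}) becomes the $\nu$-integral (over $z\in F$) of $\sum_k\radem_k\sum_{A\in\mathscr{A}_k}1_A(x)k_A(x,z_A)f_A(z_A)$.

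Next, since $\nu$ is a probability measure and $\Norm{\cdot}{X}^p$ is convex, Jensen's inequality (or Minkowski's integral inequality applied twice) allows the $\int_F\cdot\ud\nu(z)$ to be pulled outside both the $X$-norm and the $p$-th power, yielding
\begin{equation*}
  \text{LHS of (\ref{eq:trickCor})}
   \leq\iiint_{\Omega\times E\times F}\Bnorm{\sum_{k\in\Z}\radem_k\sum_{A\in\mathscr{A}_k}1_A(x)k_A(x,z_A)f_A(z_A)}{X}^p\ud\prob(\radem)\ud\mu(x)\ud\nu(z).
\end{equation*}
For each fixed $(x,z)$, only the atom $A_k(x)$ contributes at level $k$, and the resulting factor $k_{A_k(x)}(x,z_{A_k(x)})$ is a deterministic complex scalar of modulus at most $1$. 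Applying Fubini to do the inner expectation over $\radem$ first, the Kahane contraction principle $\rbound(\bar{B}_{\C}\cdot\id_X)\leq 2$ absorbs this scalar at the cost of a universal constant, giving
\begin{equation*}
  \text{LHS of (\ref{eq:trickCor})}
  \lesssim\iiint_{\Omega\times E\times F}\Bnorm{\sum_{k\in\Z}\radem_k\sum_{A\in\mathscr{A}_k}1_A(x)f_A(z_A)}{X}^p\ud\prob\ud\mu\ud\nu.
\end{equation*}

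Finally, the right-hand side above is precisely the second member of the norm equivalence (\ref{eq:trick}) (with $z$ renamed $y$), so one application of Theorem~\ref{thm:trick} converts it to $\iint_{\Omega\times E}\bnorm{\sum_k\radem_k\sum_A f_A(x)}{X}^p\ud\prob\ud\mu$, which is exactly the right-hand side of (\ref{eq:trickCor}).

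The main point that requires care is the passage from a $\nu$-average of random series to the random series of averages; the correct order of operations is \emph{(i)} rewrite the deterministic integral as $\int_F\ud\nu(z)$, \emph{(ii)} use Jensen to move this $\nu$-integration to the outside, and only then \emph{(iii)} invoke the contraction principle pointwise in $(x,z)$ for the $\Exp_\radem$ that remains. Were one instead to try to apply Theorem~\ref{thm:trick} directly to the functions $g_A(x):=\frac{1_A(x)}{\mu(A)}\int_A k_A(x,z)f_A(z)\ud\mu(z)$, the required measurability of $g_A$ with respect to $\sigma(\mathscr{A}_{k-1})$ would fail; the route above deftly sidesteps this issue by applying Theorem~\ref{thm:trick} only to the original $f_A$.
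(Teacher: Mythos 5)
Your proof is correct and uses the same ingredients as the paper's in the same essential way: rewrite the average over $A$ as an integral against the tangent measure $\nu$, pass this integral outside the norm by Jensen, strip the bounded scalar factor $k_A(x,z_A)$ by the contraction principle, and invoke Theorem~\ref{thm:trick}. The only (inconsequential) difference is the direction of presentation — you start from the left-hand side and work forward, whereas the paper starts from the right-hand side of \eqref{eq:trick} and works backward — and your closing remark about why one cannot apply Theorem~\ref{thm:trick} directly to $g_A$ is a sound and useful observation.
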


\begin{proof}
Consider the uniformly bounded sequence of functions
\begin{equation*}
  K_k(x,y):=\sum_{A\in\mathscr{A}_k}1_A(x)k_A(x,y_A)
\end{equation*}
on \(E\times F\). By the contraction principle, the right (and hence by Theorem~\ref{thm:trick}, the left) side of \eqref{eq:trick} dominates the expression
\begin{equation*}
  \iiint_{\Omega\times E\times F}
    \Bnorm{\sum_{k\in\Z}\radem_k\sum_{A\in\mathscr{A}_k}
    1_A(x)k_A(x,y_A)f_A(y_A)}{X}^p\ud\prob(\radem)\ud\mu(x)\ud\nu(y),
\end{equation*}
which in turn dominates
\begin{equation*}
  \iint_{\Omega\times E}
    \Bnorm{\sum_{k\in\Z}\radem_k\sum_{A\in\mathscr{A}_k} 1_A(x)
     \int_{F}k_A(x,y_A)f_A(y_A)\ud\nu(y)}{X}^p\ud\prob(\radem)\ud\mu(x)
\end{equation*}
by Jensen's inequality. Since the innermost integrand only depends on the coordinate \(y_A\) of \(y\), the integration over \(F\) with respect to \(\ud\nu(y)\) may be replaced by integration over \(A\) with respect to \(\ud\nu_A(y_A)=\mu(A)^{-1}\ud\mu(y_A)\), which completes the proof.
\end{proof}

It might be interesting to note an alternative approach to this result under additional structure on the space \(X\):

\begin{proposition}
Let \(X\) be a UMD function lattice, and \(p\in(1,\infty)\). Then the conclusion of Corollary~\ref{cor:trick} holds even without requiring the \(f_A\) to be \(\sigma(\mathscr{A}_{k-1})\)-measurable for \(A\in\mathscr{A}_k\).
\end{proposition}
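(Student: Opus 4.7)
The plan is to exploit the fact that in a UMD Banach function lattice $X$, the random sum $\sum_k\radem_k v_k$ is equivalent in $L^p(\Omega;X)$-norm to the lattice-valued square function $\bigl(\sum_k|v_k|^2\bigr)^{1/2}$ (Maurey's extension of Khintchine's inequality, valid since UMD forces finite cotype). This equivalence makes pointwise lattice bounds available inside the random sums, and therefore bypasses the tangent martingale decoupling of Theorem~\ref{thm:trick} together with its implicit measurability hypothesis on the $f_A$.

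First, since $|k_A(x,z)|\leq 1$ and $X$ is a lattice, one has the pointwise (in $X$) bound
\begin{equation*}
  \Bigl|\sum_{A\in\mathscr{A}_k}\frac{1_A(x)}{\mu(A)}\int_A k_A(x,z)f_A(z)\ud\mu(z)\Bigr|
  \leq \Exp_k G_k(x),
\end{equation*}
where $G_k(x):=\sum_{A\in\mathscr{A}_k}|f_A(x)|$ is an $X$-valued function on $E$ (for each $x$ the sum collapses to the single atom of $\mathscr{A}_k$ containing $x$) and $\Exp_k$ denotes Bochner conditional expectation with respect to $\sigma(\mathscr{A}_k)$. The estimate is just $\bigl|\int_A k_A f_A\ud\mu\bigr|\leq\int_A|f_A|\ud\mu$ applied lattice-wise in $X$.

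Next I would apply Maurey--Khintchine in $X$ pointwise in $x\in E$ and then integrate, invoke the above lattice bound inside the resulting square function, and apply Maurey--Khintchine in reverse to return to a Rademacher sum. These three elementary steps dominate LHS\eqref{eq:trickCor} by $\BNorm{\sum_k\radem_k\Exp_k G_k}{L^p(\Omega\times E;X)}^p$. Bourgain's vector-valued Stein inequality for UMD spaces---which requires no measurability of the $G_k$---then removes the conditional expectations at the cost of a constant. Finally, $|G_k|=G_k$ together with $\Norm{|v|}{X}=\Norm{v}{X}$ and a last round-trip through Maurey--Khintchine identifies the surviving quantity with RHS\eqref{eq:trickCor}.

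The crucial step is the Maurey--Khintchine reformulation: without a lattice structure, a Rademacher sum of arbitrary elements of $X$ cannot be meaningfully compared to a pointwise majorant, which is exactly why the tangent martingale decoupling (and hence the measurability hypothesis) was needed in Corollary~\ref{cor:trick}. Once the square-function reformulation is in hand, Bourgain's Stein inequality supplies the conditional-expectation estimate essentially for free, so the rest of the argument is bookkeeping.
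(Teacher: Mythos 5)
Your argument is correct and follows essentially the same route as the paper: both replace the Rademacher sum by the lattice square function via Khintchine--Maurey, use $|k_A|\leq 1$ to pass to a pointwise lattice majorant $\Exp_k\bigl[\sum_{A\in\mathscr{A}_k}|f_A|\bigr]$, remove the conditional expectations by Bourgain's vector-valued Stein inequality, and convert back. The paper is a bit terser but the key steps coincide exactly.
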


\begin{proof}
The main difference compared to a general UMD space is the existence of an absolute value \(\abs{\xi_k}\in X\) for each element \(\xi_k\in X\). This will be exploited via the fact that
\begin{equation*}
  \int_{\Omega}\Bnorm{\sum_{k\in\Z}\radem_k \xi_k}{X}^p\ud\prob(\radem)
  \eqsim\Bnorm{\Big(\sum_{k\in\Z}\abs{\xi_k}^2\Big)^{1/2}}{X}^p
  \eqsim\int_{\Omega}\Bnorm{\sum_{k\in\Z}\radem_k \abs{\xi_k}}{X}^p\ud\prob(\radem).
\end{equation*}

Hence, taking into account the bound \(\abs{k_A(x,z)}\leq 1\),
\begin{equation*}\begin{split}
   LHS\eqref{eq:trickCor}
   &\lesssim\iint_{\Omega\times E}
      \Bnorm{\sum_{k\in\Z}\radem_k\sum_{A\in\mathscr{A}_k}
       \frac{1_A(x)}{\mu(A)}\int_A \abs{f_A(z)}\ud\mu(z)}{X}^p\ud\prob(\radem)\ud\mu(x) \\
   &=\iint_{\Omega\times E}\Bnorm{\sum_{k\in\Z}\radem_k\Exp\big[
       \sum_{A\in\mathscr{A}_k}\abs{f_A}\,\big|\sigma(\mathscr{A}_k)\big](x)}{X}^p
       \ud\prob(\radem)\ud\mu(x) \\
   &\lesssim\iint_{\Omega\times E}\Bnorm{\sum_{k\in\Z}\radem_k
       \sum_{A\in\mathscr{A}_k}\abs{f_A(x)}}{X}^p
       \ud\prob(\radem)\ud\mu(x) \eqsim RHS\eqref{eq:trickCor},
\end{split}\end{equation*}
where the second to last step was an application of the vector-valued Stein inequality due to Bourgain~\cite{Bourgain:86}, and throughout it was used that the functions $f_A$, where $A\in\mathscr{A}_k$ for a fixed $k$, are disjointly supported.
\end{proof}


\section{Separated cubes}\label{sec:separated}

This section begins the lenghty task of estimating various subseries of the expansion~\eqref{eq:matrixT}. The numbers $\epsilon$ and $m$ will be considered fixed, and the summation ranges of dyadic cubes are restricted to the side-lengths $\epsilon\leq\ell(Q),\ell(R)\leq 2^m$ without indicating this explicitly; it is important that the obtained estimates are uniform in these parameters, and the convention concerning implicit constants, as formulated in Notation~\ref{not:C} will be heavily employed. In addition to the parameters listed there, the implicit constants are also allowed to depend on the auxiliary parameter $r\in\Z_+$ from the definition of good and bad dyadic cubes in Section~\ref{sec:random}. This convention will be in force until further notice.

In this section, the binary sequences $\beta,\beta',\tilde\beta,\tilde\beta'$ parameterizing the dyadic systems are held fixed, and one deals with the part of~\eqref{eq:matrixT} where a smaller cube \(Q\in\mathscr{D}\) is separated from the larger \(R\in\mathscr{D}'\) by at least its own side-lenght, \(\dist(Q,R)\geq\ell(Q)\). By symmetry of the assumptions, the same conclusion will follow for the part of the series with the r\^oles of \(Q\) and \(R\) interchanged. It will also be assumed that all ``Haar'' functions $\varphi_Q$ related to the smaller cube $Q$ are cancellative ones; in any case, the contrary could only happen when $\ell(Q)=\ell(R)=2^m$, and the boundedly many pairs of cubes like this will be treated as close-by cubes of comparable size in Section~\ref{sec:close}.

Also, one restricts the summations to the good cubes $Q\in\mathscr{D}$ and $R\in\mathscr{D}'$ only. This is equivalent to replacing $f$ and $g$ by their good parts
\begin{equation*}
  f_{\good}:=\sum_{\substack{Q\in\mathscr{D}_{\good} \\ \ell(Q)\leq 2^m}}\D_Q^{b_1}f+\sum_{\substack{Q\in\mathscr{D}_{\good} \\ \ell(Q)=2^m}}\Exp_Q^{b_1}f
\end{equation*}
and $g_{\good}$, which is defined similarly. The present aim is then to prove that
\begin{equation}\label{eq:separGoal}
  \Babs{\sum_{R\in\mathscr{D}_{\good}'}
    \sum_{\begin{smallmatrix}Q\in\mathscr{D}_{\good}\\ \ell(Q)\leq\dist(Q,R)\wedge\ell(R)\end{smallmatrix}}
    \pair{g}{\psi_R}T_{RQ}\pair{\varphi_Q}{f}}
  \lesssim\Norm{g}{L^{p'}(\mu;X^*)}\Norm{f}{L^p(\mu;X)}
\end{equation}
where, recall,
\(T_{RQ}:=\pair{\psi_R b_2}{T(b_1\varphi_Q)}\).

\begin{lemma}\label{lem:NTV61}
Let \(\ell(Q)\leq\ell(R)\wedge\dist(Q,R)\). Then
\begin{equation*}
  \abs{T_{RQ}}\lesssim\frac{\ell(Q)^{\alpha}}{\dist(Q,R)^{d+\alpha}}
    \Norm{\psi_R}{L^1(\mu)}\Norm{\varphi_Q}{L^1(\mu)}.
\end{equation*}
\end{lemma}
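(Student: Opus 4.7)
The plan is as follows. Since $\dist(Q,R) \geq \ell(Q) > 0$, the supports of $b_1\varphi_Q$ and $b_2\psi_R$ are disjoint, so the kernel representation \eqref{eq:Tf} applies inside $T_{RQ}$:
\begin{equation*}
  T_{RQ} = \int_R\int_Q K(x,y)\,b_1(y)\varphi_Q(y)\,b_2(x)\psi_R(x)\ud\mu(y)\ud\mu(x).
\end{equation*}
The first key ingredient is the cancellation property $\int b_1\varphi_Q\ud\mu = 0$ from Proposition~\ref{prop:Haar}; this is where the assumption that $Q$ supports only cancellative ``Haar'' functions (i.e., $\varphi_Q = \varphi^{b_1}_{Q,u}$ with $u\geq 1$) is used. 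Fix any point $y_Q \in Q$ (e.g.\ its centre). Then, using the cancellation, we may subtract a constant-in-$y$ piece and write
\begin{equation*}
  T_{RQ} = \int_R\int_Q [K(x,y)-K(x,y_Q)]\,b_1(y)\varphi_Q(y)\,b_2(x)\psi_R(x)\ud\mu(y)\ud\mu(x).
\end{equation*}

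The second ingredient is the H\"older continuity \eqref{eq:CZK2} of the kernel in its second variable. For $y\in Q$, $x\in R$ one has $|y-y_Q| \leq \tfrac12\sqrt{N}\,\ell(Q)$ while $|x-y|\geq \dist(Q,R)\geq\ell(Q)$, so that $|x-y|\gtrsim|y-y_Q|$, and $|x-y|\eqsim\dist(Q,R)$ throughout the integration region. Applying \eqref{eq:CZK2} (in its form with constants, which is innocuous by Notation~\ref{not:C}) therefore gives
\begin{equation*}
  |K(x,y)-K(x,y_Q)| \lesssim \frac{|y-y_Q|^{\alpha}}{|x-y|^{d+\alpha}} \lesssim \frac{\ell(Q)^{\alpha}}{\dist(Q,R)^{d+\alpha}}
\end{equation*}
uniformly for $y\in Q$ and $x\in R$.

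Finally, plugging this pointwise bound into the double integral, pulling out the geometric factor, and using $\|b_1\|_{\infty},\|b_2\|_{\infty}\leq 1$, one obtains
\begin{equation*}
  |T_{RQ}| \lesssim \frac{\ell(Q)^{\alpha}}{\dist(Q,R)^{d+\alpha}}\int_R|\psi_R(x)|\ud\mu(x)\int_Q|\varphi_Q(y)|\ud\mu(y)
  = \frac{\ell(Q)^{\alpha}}{\dist(Q,R)^{d+\alpha}}\,\|\psi_R\|_{L^1(\mu)}\|\varphi_Q\|_{L^1(\mu)},
\end{equation*}
which is precisely the claim. The only non-routine point is verifying the admissibility of the H\"older estimate (the condition $|x-y|>2|y-y_Q|$ in \eqref{eq:CZK2}), but this is immediate from $\dist(Q,R)\geq\ell(Q)$ up to a dimensional constant absorbed into $\lesssim$; thus no serious obstacle arises.
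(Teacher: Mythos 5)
Your proof is correct and follows essentially the same route as the paper's: write $T_{RQ}$ via the kernel representation, subtract $K(x,y_Q)$ exploiting the cancellation $\int b_1\varphi_Q\,\ud\mu=0$, and invoke the H\"older estimate \eqref{eq:CZK2} to bound the difference, pulling out the $L^1$ norms of $\psi_R$ and $\varphi_Q$. (The paper in turn refers to Lemma~6.1 of Nazarov--Treil--Volberg; your caveat about the admissibility condition $|x-y|>2|y-y_Q|$ is handled equally casually there, and in the genuinely borderline regime one can fall back on the size bound \eqref{eq:CZK1} since then $\dist(Q,R)\eqsim\ell(Q)$.)
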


\begin{proof}
This is essentially \cite{NTV:Tb}, Lemma~6.1
(but leaving out the last line of the proof, where the \(\Norm{\varphi_Q}{L^1(\mu)}\) was dominated by \(\mu(Q)^{1/2}\Norm{\varphi_Q}{L^2(\mu)}\), and similarly with \(\psi_Q\)):
writing \(y_0\) for the centre of \(Q\) and recalling that \(\int b_1\varphi_Q\ud\mu=0\), there holds
\begin{equation*}
  \abs{T_{RQ}}=\Babs{\iint\psi_R(x)b_2(x)[K(x,y)-K(x,y_0)]b_1(y)\varphi_Q(y)\ud\mu(y)\ud\mu(x)},
\end{equation*}
and the required estimate follows from \eqref{eq:CZK2}.
\end{proof}

For the next estimate, define, as in \cite{NTV:Tb}, the \emph{long distance} of two cubes
\begin{equation*}
  D(Q,R):=\ell(Q)+\dist(Q,R)+\ell(R).
\end{equation*}

\begin{lemma}\label{lem:NTV64}
Let \(Q\in\mathscr{D}_{\good}\) and \(R\in\mathscr{D}'\) be as in Lemma~\ref{lem:NTV61}. Then
\begin{equation*}
  \abs{T_{RQ}}\lesssim\frac{\ell(Q)^{\alpha/2}\ell(R)^{\alpha/2}}{D(Q,R)^{d+\alpha}}
    \Norm{\psi_R}{L^1(\mu)}\Norm{\varphi_Q}{L^1(\mu)}.
\end{equation*}
\end{lemma}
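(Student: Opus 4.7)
The plan is to upgrade Lemma~\ref{lem:NTV61} to the symmetric form by exploiting the goodness of $Q$, starting from the asymmetric bound
\[
|T_{RQ}|\lesssim\frac{\ell(Q)^{\alpha}}{\dist(Q,R)^{d+\alpha}}\Norm{\psi_R}{L^1(\mu)}\Norm{\varphi_Q}{L^1(\mu)}
\]
provided by that lemma. Since $\ell(Q)\leq\ell(R)$ gives for free $\ell(Q)^{\alpha}\leq\ell(Q)^{\alpha/2}\ell(R)^{\alpha/2}$, the real task is to replace $\dist(Q,R)^{d+\alpha}$ by $D(Q,R)^{d+\alpha}$ in the denominator. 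Under the hypotheses one has $D(Q,R)\eqsim\max\{\ell(R),\dist(Q,R)\}$, so it suffices to handle the case $\dist(Q,R)\leq\ell(R)$; in the complementary regime $\dist(Q,R)\geq\ell(R)$ we already have $D(Q,R)\eqsim\dist(Q,R)$, and Lemma~\ref{lem:NTV61} combined with the trivial comparison $\ell(Q)^{\alpha}\leq\ell(Q)^{\alpha/2}\ell(R)^{\alpha/2}$ delivers the claim directly.

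The first step in the remaining regime is to establish the scale-mixed separation estimate
\[
\dist(Q,R)\gtrsim \ell(Q)^{\gamma}\ell(R)^{1-\gamma},
\]
valid uniformly over all pairs under consideration. When $\ell(R)\geq 2^{r}\ell(Q)$, this follows from Lemma~\ref{lem:newGood}: the assumption $\dist(Q,R)\geq\ell(Q)>0$ forces $Q$ and $R$ to be disjoint, so that $\dist(Q,R)=\dist(Q,\partial R)$. When $\ell(R)<2^{r}\ell(Q)$, the two side-lengths are comparable with constant depending on $r$, and the trivial lower bound $\dist(Q,R)\geq\ell(Q)$ suffices.

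The second step is purely algebraic: the choice $\gamma=\alpha/(2(d+\alpha))$ was tailor-made so that $\gamma(d+\alpha)=\alpha/2$ and $(1-\gamma)(d+\alpha)=d+\alpha/2$. Raising the separation estimate to the $(d+\alpha)$th power therefore yields $\dist(Q,R)^{d+\alpha}\gtrsim\ell(Q)^{\alpha/2}\ell(R)^{d+\alpha/2}$, and substituting this into Lemma~\ref{lem:NTV61} produces
\[
|T_{RQ}|\lesssim \frac{\ell(Q)^{\alpha/2}\ell(R)^{\alpha/2}}{\ell(R)^{d+\alpha}}\Norm{\psi_R}{L^1(\mu)}\Norm{\varphi_Q}{L^1(\mu)},
\]
which is the desired bound since $\ell(R)\eqsim D(Q,R)$ in the regime $\dist(Q,R)\leq\ell(R)$. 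I do not foresee any genuine obstacle: the entire argument is a short accounting exercise in powers, whose exponents line up exactly because of the way $\gamma$ was chosen.
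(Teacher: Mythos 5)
Your proof is correct, and it follows the same route that the paper's one-line citation points to (Nazarov--Treil--Volberg's Lemma~6.4): combine the asymmetric bound of Lemma~\ref{lem:NTV61} with the scale-mixed separation estimate $\dist(Q,R)\gtrsim\ell(Q)^{\gamma}\ell(R)^{1-\gamma}$ furnished by goodness, then observe that the exponent $\gamma=\alpha/(2(d+\alpha))$ is chosen precisely so that the powers rearrange. Two small points worth noting: you correctly invoked Lemma~\ref{lem:newGood} rather than the definition of goodness directly, which matters in this paper because goodness is defined with respect to the perturbed system $\mathscr{D}_Q'$; and you correctly reduced $\dist(Q,R)$ to $\dist(Q,\partial R)$ via the disjointness forced by $\dist(Q,R)\geq\ell(Q)>0$, a step one sees the paper perform implicitly in the analogous computations of Sections~\ref{sec:contained} and~\ref{sec:close}.
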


\begin{proof}
This repeats \cite{NTV:Tb}, Lemma~6.4.
\end{proof}

To prove \eqref{eq:separGoal}, consider first the part of the series where the ratio \(\ell(R)/\ell(Q)\) is a fixed number \(2^n\) with \(n\in\N\), and also \(2^j<D(Q,R)/\ell(R)\leq 2^{j+1}\) for a momentarily fixed \(j\in\N\). The last double inequality will be abbreviated as \(D(Q,R)/\ell(R)\sim 2^j\). If moreover \(R\in\mathscr{D}_k'\), the estimate of Lemma~\ref{lem:NTV64} reads
\begin{equation}\label{eq:NTV64}
  \frac{\abs{T_{RQ}}}{\Norm{\psi_R}{1}\Norm{\varphi_Q}{1}}
  \lesssim\frac{2^{(k-n)\alpha/2}2^{k\alpha/2}}{2^{(k+j)(d+\alpha)}}
  =2^{-n\alpha/2}2^{-j\alpha}2^{-(k+j)d}.
\end{equation}
In the following calculations, the summation condition \(\dist(Q,R)\geq\ell(Q)\) is always in force although it will not be indicated explicitly.

From~\eqref{eq:firstBy}, it follows that
\begin{equation}\label{eq:separStarts}\begin{split}
  &\Babs{\sum_{k\in\Z}\sum_{R\in\mathscr{D}_{\good,k}'}
    \sum_{\ontop{Q\in\mathscr{D}^{\good}_{k-n}}{D(Q,R)/\ell(R)\sim 2^j}}
    \pair{g}{\psi_R}T_{RQ}\pair{\varphi_Q}{f}} \\
  &\lesssim\Norm{g}{L^{p'}(\prob\otimes\mu;X^*)}
   \BNorm{\sum_{k\in\Z}\radem_k\sum_{\ontop{R\in\mathscr{D}_{\good,k}';Q\in\mathscr{D}^{\good}_{k-n}}{{D(Q,R)}/{\ell(R)}\sim 2^j}}
     \psi_R\, T_{RQ}\pair{\varphi_Q}{f}}{L^p(\prob\otimes\mu;X)}.
\end{split}\end{equation}

Next, observe that all cubes \(R\in\mathscr{D}_{\good}'\) with \(\ell(R)=2^{n}\ell(Q)\) and \(D(R,Q)\leq 2^{j+1}\ell(R)\) satisfy
\begin{equation*}
  R\subseteq Q^{(n+j+\theta(j))}.
\end{equation*}
Indeed, if not, then a contradiction results from \eqref{eq:rAtLeast} and
\begin{equation*}\begin{split}
  2^{j+1}\ell(R)
  &\geq D(R,Q)>\dist(R,Q)
   \geq\dist(R,Q^{(n+j+\theta(j))}) \\
  &\geq\frac12\ell(R)^{\gamma}\ell(Q^{(n+j+\theta(j))})^{1-\gamma}
   =\frac12\ell(R)^{\gamma}(2^{j+\theta(j)}\ell(R))^{1-\gamma} \\
  &\geq 2^{-1}2^{j(1-\gamma)+\gamma j+r}\ell(R)
   =2^{j+r-1}\ell(R).
\end{split}\end{equation*}
Hence the summation over \(R\) may be reorganized as
\begin{equation*}
  \sum_{R\in\mathscr{D}_{\good,k}'}
  =\sum_{S\in\mathscr{D}_{k+j+\theta(j)}}
   \sum_{\ontop{R\in\mathscr{D}_{\good,k}'}{R\subset S}}.
\end{equation*}
For \(Q,R,S\) as in the above sums, denote
\begin{equation*}\begin{split}
  T_{RQ} &=:2^{-n\alpha/2}2^{-j\alpha}\frac{\Norm{\psi_R}{1}\Norm{\varphi_Q}{1}}{2^{(k+j)d}}t_{RQ} \\
  &=:2^{-(n+j)\alpha/2}\frac{\Norm{\psi_R}{1}\Norm{\varphi_Q}{1}}{\mu(S)}
    \tilde{t}_{RQ},
\end{split}\end{equation*}
where \(\abs{\tilde{t}_{RQ}}\lesssim\abs{t_{RQ}}\lesssim 1\) by \(\mu(S)\leq 2^{(k+j+\theta(j))d}\lesssim 2^{(k+j)d+j\alpha/2}\) and \eqref{eq:NTV64}.

For each \(S\in\mathscr{D}_{k+j+\theta(j)}\), define the kernel
\begin{equation*}
  K_S(x,y):=\sum_{\ontop{R\in\mathscr{D}_{\good,k}'}{R\subset S}}
   \sum_{\ontop{Q\in\mathscr{D}^{\good}_{k-n}}{D(Q,R)/\ell(R)\sim 2^j}}
   \psi_R(x)\Norm{\psi_R}{1}\tilde{t}_{RQ}\Norm{\varphi_Q}{1}\varphi_Q(y)b_1(y).
\end{equation*}
Then \(K_S\) is supported on \(S\times S\) and \(\abs{K_S(x,y)}\lesssim 1\), since \(\Norm{\varphi_Q}{\infty}\Norm{\varphi_Q}{1}\lesssim 1\), and the same with \(\psi_R\), and since there is at most one non-zero term in the double sum for any given pair of points \((x,y)\). The quantity inside the \(L^p(\prob\otimes\mu;X)\)-norm in \eqref{eq:separStarts} is \(2^{-(n+j)\alpha/2}\) times
\begin{equation*}
  \sum_{k_0=0}^{n+j+\theta(j)}
  \sum_{\ontop{k\in\Z;k\equiv k_0}{\mod n+j+\theta(j)+1}}
  \radem_k\sum_{S\in\mathscr{D}_{k+j+\theta(j)}}
  \frac{1_S(x)}{\mu(S)}\int_S K_S(x,y)\frac{1_S\D_{k-n}^{b_1}f}{b_1}(y)\ud y,
\end{equation*}
where the fact that \(\pair{\varphi_Q}{f}=\pair{\varphi_Q}{\D_{k-n}^{b_1}f}\) for \(Q\in\mathscr{D}_{k-n}\) was also used.

For a fixed \(k_0\), the series over \(k\equiv k_0\mod n+j+\theta(j)+1\) above is exactly of the form considered in Corollary~\ref{cor:trick}: \(1_S\cdot b_1^{-1}\cdot\D_k^{b_1}f\) is supported on \(S\in\mathscr{D}_{k+j+\theta(j)}\), and it is constant on every cube \(Q'\in\mathscr{D}_{k-n-1}=\mathscr{D}_{k'+j+\theta(j)}\), where \(k'=k-(n+j+\theta(j)+1)\). Hence the \(L^p(\prob\otimes\mu;X)\)-norm of this series is dominated by
\begin{equation*}
  \BNorm{\sum_{k\equiv k_0}\radem_k\sum_{S\in\mathscr{D}_{k+j+\theta(j)}}
     1_S\cdot b_1^{-1}\cdot\D_k^{b_1}f}{L^p(\prob\otimes\mu;X)}
  \lesssim\Norm{f}{L^p(\mu;X)}
\end{equation*}
using Corollary~\ref{cor:trick}, para-accretivity of \(b_1\), and the unconditional convergence of the twisted martingale differences.

The full series over \(k\in\Z\) consists of \(n+j+\theta(j)+1\lesssim n+j+1\) subseries like this, which implies that the quantity in \eqref{eq:separStarts} is dominated by
\begin{equation*}
  C2^{-(n+j)\alpha/2}(n+j+1).
\end{equation*}
Since this is summable over \(n,j\in\N\), this proves the goal~\eqref{eq:separGoal}.

 
\section{Cubes well inside another cube}\label{sec:contained}

This section addresses the part of the series \eqref{eq:matrixT}, where a smaller cube \(Q\in\mathscr{D}_{\good}\) is contained in a substantially larger cube \(R\in\mathscr{D}_{\good}'\) with \(\ell(R)>2^r\ell(Q)\). (Again, the symmetry of the assumptions allows to deduce the same final result also for \(Q\) and \(R\) in opposite relative positions.) Hence, the relevant part of the series of is
\begin{equation}\label{eq:containedSeries}
  \sum_{R\in\mathscr{D}_{\good}'}
    \sum_{\ontop{Q\in\mathscr{D}_{\good};Q\subset R}{\ell(Q)<2^{-r}\ell(R)}}
    \pair{g}{\psi_R}T_{RQ}\pair{\varphi_Q}{f}
\end{equation}
where, as before, \(T_{RQ}:=\pair{\psi_R b_2}{T(b_1\varphi_Q)}\). In this section, we deal with a modification of this series, with $T_{RQ}$ replaced by
\begin{equation*}
  \tilde{T}_{RQ}:=T_{RQ}-\pair{b_2}{T(b_1\varphi_Q)}\ave{\psi_R}_{Q},
\end{equation*}
postponing the treatment of the correction term.
Hence the goal of this section is reduced to proving that
\begin{equation}\label{eq:containedGoal}
  \Babs{\sum_{R\in\mathscr{D}_{\good}'}
    \sum_{\begin{smallmatrix}Q\in\mathscr{D}_{\good};Q\subset R \\ \ell(Q)<2^{-r}\ell(R)\end{smallmatrix}}
    \pair{g}{\psi_R}\tilde{T}_{RQ}\pair{\varphi_Q}{f}}
  \lesssim\Norm{g}{L^{p'}(\mu;X^*)}\Norm{f}{L^p(\mu;X)}.
\end{equation}
This will follow a similar strategy as in Section~\ref{sec:separated}, starting from the estimation of the matrix elements \(\tilde{T}_{RQ}\).

\begin{lemma}\label{lem:NTV73}
Let \(Q\) be good, \(Q\subset R\), \(\ell(Q)<2^{-r}\ell(R)\), and 
let \(S\in\mathscr{D}'\) be the subcube of \(R\) containing \(Q\) with \(\ell(S)=\ell(R)/2\). Then
\begin{equation*}\begin{split}
  \tilde{T}_{RQ} 
  &=-\ave{\psi_R}_{S}\pair{1_{S^c}b_2}{T(b_1\varphi_Q)}
   +\sum_{\ontop{S'\in\mathscr{D}';S'\subset R\setminus S}{\ell(S')=\ell(R)/2}}
      \pair{\psi_R 1_{S'}b_2}{T(b_1\varphi_Q)} \\
  \abs{\tilde{T}_{RQ}}
  &\lesssim\Big(\frac{\ell(Q)}{\ell(R)}\Big)^{\alpha/2}\Big(
    \abs{\ave{\psi_R}_{S}}+\frac{\Norm{\psi_R}{L^1(\mu)}}{\mu(R)}\Big)
   \Norm{\varphi_Q}{L^1(\mu)}
\end{split}\end{equation*}
\end{lemma}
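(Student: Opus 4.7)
The proof splits naturally into an algebraic identity and a quantitative estimate.

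For the identity, the key observation is that $\psi_R = \varphi^{b_2}_{R,v}$ is, by the explicit formulas of Section~\ref{sec:Haar}, constant on each of the $2^N$ first-generation dyadic children of $R$. In particular, on the unique child $S$ containing $Q$, $\psi_R|_S = \ave{\psi_R}_S \cdot 1_S$, so that $\ave{\psi_R}_Q = \ave{\psi_R}_S$ (since $Q \subset S$). Splitting the defining integral of $T_{RQ}$ over the children of $R$ gives
\begin{equation*}
  T_{RQ} = \ave{\psi_R}_S \int_S b_2\, T(b_1\varphi_Q) \ud\mu
   + \sum_{\substack{S' \in \mathscr{D}',\ S' \subset R\setminus S \\ \ell(S')=\ell(R)/2}} \pair{\psi_R 1_{S'} b_2}{T(b_1\varphi_Q)}.
\end{equation*}
Rewriting $\int_S = \int - \int_{S^c}$ in the first term and then subtracting $\ave{\psi_R}_Q \pair{b_2}{T(b_1\varphi_Q)} = \ave{\psi_R}_S \pair{b_2}{T(b_1\varphi_Q)}$ produces exactly the stated decomposition of $\tilde{T}_{RQ}$.

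For the size estimate, each pairing involves $T(b_1\varphi_Q)$ tested against a function supported away from $Q$. The cancellation $\int b_1\varphi_Q \ud\mu = 0$ from Proposition~\ref{prop:Haar} allows the kernel to be replaced by $K(x,y) - K(x,y_Q)$ (with $y_Q$ the centre of $Q$), and \eqref{eq:CZK2} then gives $|K(x,y)-K(x,y_Q)| \lesssim \ell(Q)^\alpha |x-y_Q|^{-(d+\alpha)}$. The crucial separation $\dist(Q,\partial S) \gtrsim \ell(Q)^\gamma \ell(R)^{1-\gamma}$ follows from applying Lemma~\ref{lem:newGood} to the cube $S$: this is legitimate because the dyadic condition $\ell(Q) < 2^{-r}\ell(R)$ forces $\ell(Q) \leq 2^{-r-1}\ell(R)$, hence $\ell(S) \geq 2^r\ell(Q)$. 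Combined with the standard annular decomposition $\int_{|x-y_Q|\geq s}|x-y_Q|^{-(d+\alpha)}\ud\mu(x) \lesssim s^{-\alpha}$ (using $\mu(B(x,r)) \leq r^d$), one obtains
\begin{equation*}
  |\pair{1_{S^c}b_2}{T(b_1\varphi_Q)}| \lesssim \Norm{\varphi_Q}{L^1(\mu)} \ell(Q)^\alpha (\ell(Q)^\gamma \ell(R)^{1-\gamma})^{-\alpha}
\end{equation*}
and, after crudely bounding $|x-y_Q|$ below by $\dist(Q,S^c)$ on all $S' \subset R\setminus S$ and using $\int_{R\setminus S}|\psi_R| \leq \Norm{\psi_R}{L^1(\mu)}$,
\begin{equation*}
  \sum_{S'} |\pair{\psi_R 1_{S'} b_2}{T(b_1\varphi_Q)}| \lesssim \Norm{\varphi_Q}{L^1(\mu)}\Norm{\psi_R}{L^1(\mu)} \ell(Q)^\alpha (\ell(Q)^\gamma \ell(R)^{1-\gamma})^{-(d+\alpha)}.
\end{equation*}

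The specific choice $\gamma = \alpha/(2(\alpha+d))$ now simplifies the exponents: since $\gamma(d+\alpha) = \alpha/2$, the second bound reduces to $\Norm{\varphi_Q}{L^1(\mu)}\Norm{\psi_R}{L^1(\mu)}\ell(R)^{-d}(\ell(Q)/\ell(R))^{\alpha/2}$, and the estimate $\mu(R) \leq \ell(R)^d$ then converts $\ell(R)^{-d}$ into the advertised $\mu(R)^{-1}$. Similarly, $\alpha(1-\gamma) \geq \alpha/2$ (as $\gamma < 1/2$) weakens the first bound's exponent to $\alpha/2$. Adding the two contributions yields the claim.

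The main technical point to watch is the distance estimate $\dist(Q,\partial S) \gtrsim \ell(Q)^\gamma \ell(R)^{1-\gamma}$, since Lemma~\ref{lem:newGood} as formulated concerns only the parent cube itself. The resolution is to apply it directly to $S$, treating $S$ as the ``$R$'' of the lemma; this is where the dyadic hypothesis $\ell(Q) < 2^{-r}\ell(R)$ is used in its sharpened form $\ell(S) \geq 2^r\ell(Q)$.
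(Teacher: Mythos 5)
Your proof is correct and follows essentially the same route as the paper: identical algebraic decomposition, same kernel estimate and annular decomposition for the $S^c$ term, and the same goodness-based distance bound $\dist(Q,\partial S)\gtrsim\ell(Q)^{\gamma}\ell(S)^{1-\gamma}$ (the paper invokes this directly from the definition of singular pairs, which explicitly checks the half-size subcubes of $R$, whereas you obtain it by applying Lemma~\ref{lem:newGood} to $S$ with the dyadic sharpening $\ell(S)\geq 2^r\ell(Q)$ — both are legitimate). The only variation is that for the $R\setminus S$ term you estimate the kernel integral directly using the crude lower bound $|x-y_Q|\geq\dist(Q,S^c)$, while the paper cites Lemma~\ref{lem:NTV64}; with the specific value of $\gamma$ these give the same final bound, so this is a cosmetic rather than substantive difference.
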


\begin{proof}
Concerning the equality, the fact that \(\psi_R\) is constant on the subcubes of \(R\) implies that
\begin{equation*}
  LHS=\sum_{\ontop{S'\in\mathscr{D}';S'\subset R}{\ell(S')=\ell(R)/2}}
     \pair{\psi_R 1_{S'}b_2}{T(b_1\varphi_Q)}
     -\ave{\psi_R}_{S}\pair{b_2}{T(b_1\varphi_Q)}
  =RHS.
\end{equation*}

As for the upper bound of the first term,
\begin{equation*}\begin{split}
  \abs{\pair{1_{S^c}b_2}{T(b_1\varphi_Q)}}
  &\lesssim\int_{S^c}\frac{\ell(Q)^{\alpha}}{\dist(x,Q)^{d+\alpha}}\Norm{\varphi_Q}{1}\ud\mu(x) \\
  &\lesssim\frac{\ell(Q)^{\alpha}}{\dist(S^c,Q)^{\alpha}}\Norm{\varphi_Q}{1}:
\end{split}\end{equation*}
the first estimate is similar to Lemma~\ref{lem:NTV61}, and the second follows by splitting the integration into dyadic annuli \(2^k\leq\dist(x,Q)/\dist(S^c,Q)<2^{k+1}\), \(k\in\N\), and using
\begin{equation*}
   \mu\big(\{x:\dist(x,Q)<2^k\dist(S^c,Q)\}\big)\lesssim(\ell(Q)+2^k\dist(S^c,Q))^d
   \lesssim 2^{kd}\dist(S^c,Q)^d.
\end{equation*}
The last bound was due to the goodness of \(Q\), and for the same reason (and noting that $\gamma\leq\frac12$)
\begin{equation*}
  \dist(S^c,Q)\gtrsim\ell(Q)^{\gamma}\ell(S)^{1-\gamma}\gtrsim\ell(Q)^{1/2}\ell(R)^{1/2},
\end{equation*}
which concludes the estimation of the first term.

For the second term one can apply Lemma~\ref{lem:NTV64} with \(\psi_R 1_{S'}\) in place of \(\psi_R\), observing that nothing but the support and integrability properties of \(\psi_R\) were used in the proof. This gives
\begin{equation*}\begin{split}
  \abs{\pair{\psi_R 1_{S'}b_2}{T(b_1\varphi_Q)}}
  &\lesssim\frac{\ell(Q)^{\alpha/2}\ell(S')^{\alpha/2}}{D(Q,S')^{d+\alpha}}\Norm{\psi_R}{1}\Norm{\varphi_Q}{1} \\
  &\leq\Big(\frac{\ell(Q)}{\ell(S')}\Big)^{\alpha/2}\frac{\Norm{\psi_R}{1}\Norm{\varphi_Q}{1}}{\ell(S')^d},
\end{split}\end{equation*}
and the proof is concluded by noting that \(\ell(S')=\ell(R)/2\), and hence \(\ell(S')^d\gtrsim\mu(R)\).
\end{proof}

\begin{lemma}\label{lem:containedMatrix}
Under the assumptions of Lemma~\ref{lem:NTV73},
\begin{equation*}
  \abs{\psi_R(x)\tilde{T}_{RQ}\varphi_Q(y)}\lesssim \Big(\frac{\ell(Q)}{\ell(R)}\Big)^{\alpha/2}
   \Big(\frac{1_{R\setminus S}(x)}{\mu(R)}+\frac{1_{S}(x)}{\mu(S)}\Big).
\end{equation*}
\end{lemma}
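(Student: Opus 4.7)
The plan is to combine the upper bound for $\abs{\tilde T_{RQ}}$ supplied by the second assertion of Lemma~\ref{lem:NTV73} with the explicit pointwise description of the twisted Haar functions in Proposition~\ref{prop:Haar}. The factor $\varphi_Q(y)$ is removed first: since $\abs{\varphi_Q(y)}\leq\Norm{\varphi_Q}{L^{\infty}(\mu)}$, the product $\Norm{\varphi_Q}{L^1(\mu)}\abs{\varphi_Q(y)}$ appearing in Lemma~\ref{lem:NTV73} is controlled by $\Norm{\varphi_Q}{L^{\infty}(\mu)}\Norm{\varphi_Q}{L^1(\mu)}\lesssim 1$ from the last part of Proposition~\ref{prop:Haar}. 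The claim thus reduces to the pointwise inequality
\begin{equation*}
  \abs{\psi_R(x)}\Big(\abs{\ave{\psi_R}_{S}}+\frac{\Norm{\psi_R}{L^1(\mu)}}{\mu(R)}\Big)
  \lesssim \frac{1_{R\setminus S}(x)}{\mu(R)}+\frac{1_{S}(x)}{\mu(S)}.
\end{equation*}

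To verify this, write $\psi_R=\varphi^{b_2}_{R,v}$ and invoke Proposition~\ref{prop:Haar} to obtain
\begin{equation*}
  \abs{\psi_R(x)}\eqsim\sqrt{\mu(R_v)}\Big(\frac{1_{R_v}(x)}{\mu(R_v)}+\frac{1_{\hat R_{v+1}}(x)}{\mu(R)}\Big),
  \qquad \Norm{\psi_R}{L^1(\mu)}\eqsim\sqrt{\mu(R_v)}.
\end{equation*}
Since $\psi_R$ is constant on each of the $2^N$ dyadic children of $R$, and $S$ is one of these children, $\ave{\psi_R}_S$ equals the value of $\psi_R$ on $S$: this is $\eqsim 1/\sqrt{\mu(R_v)}$ if $S=R_v$, $\eqsim \sqrt{\mu(R_v)}/\mu(R)$ if $S\subset\hat R_{v+1}$, and $0$ if $S$ lies outside the support $\hat R_v$ of $\psi_R$. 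Plugging these explicit values into the left hand side and splitting according to whether $x$ lies in $S$ or in $R\setminus S$ (and within each, whether $x\in R_v$, $x\in\hat R_{v+1}$, or outside $\hat R_v$) reduces everything to elementary comparisons of $\mu(R_v)$, $\mu(S)$, and $\mu(R)$. Outside $R$ both sides vanish, since $\psi_R$ is supported in $\hat R_v\subset R$.

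The only point requiring attention is that the right hand side of the claim is asymmetric between $S$ and $R\setminus S$, whereas Lemma~\ref{lem:NTV73} provides a symmetric upper bound on $\abs{\tilde T_{RQ}}$; the needed asymmetry must therefore come entirely from the factor $\abs{\psi_R(x)}$. The key observation is that the pointwise bound in Proposition~\ref{prop:Haar} attains the large value $\sim 1/\sqrt{\mu(R_v)}$ only when $x\in R_v$, and this large value combines with the equally large $\ave{\psi_R}_S\sim 1/\sqrt{\mu(R_v)}$ only in the subcase $x\in S=R_v$, which is precisely where the right hand side of the claim permits the correspondingly weaker bound $1/\mu(S)$. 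In every other configuration, using $\mu(R_v)\leq\mu(R)$ and $\mu(S)\leq\mu(R)$, the two factors combine to a quantity of order at most $1/\mu(R)$, matching $1_{R\setminus S}/\mu(R)$. This bookkeeping of cases is the main obstacle, but once unpacked it closes each case with a one-line computation.
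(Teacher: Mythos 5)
Your proof is correct and follows essentially the same approach as the paper's: after peeling off the harmless factor $\Norm{\varphi_Q}{L^1(\mu)}\abs{\varphi_Q(y)}\lesssim 1$, you reduce to a pointwise estimate on $\abs{\psi_R(x)}\big(\abs{\ave{\psi_R}_S}+\Norm{\psi_R}{L^1(\mu)}/\mu(R)\big)$ and close by a case analysis based on the explicit form of $\psi_R=\varphi^{b_2}_{R,v}$ from Proposition~\ref{prop:Haar}. The paper organizes the case analysis slightly differently (by whether $v=w$ or $v\neq w$, where $S=R_w$), but the underlying comparisons of $\mu(R_v)$, $\mu(S)$, and $\mu(R)$ are identical, including your key observation that the dangerous $1/\mu(S)$ order of magnitude occurs precisely when $x\in S=R_v$.
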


\begin{proof}
For the second term in the estimate of \(\abs{\tilde{T}_{RQ}}\) in Lemma~\ref{lem:NTV73}, this is clear. For the first term, one has to look more carefully into the structure of the function \(\psi_R\), recalling that \(\psi_R=\varphi^{b_2}_{R,v}\) for some \(v\in\{1,\ldots,2^N\}\). Let \(S=R_w\).

If \(v=w\), then for \(x\in R_w\),
\begin{equation*}
  \abs{\psi_R(x)}=
  \abs{\ave{\psi_R}_{R_w}}\lesssim\mu(R_w)^{-1/2}
\end{equation*}
so that \(\abs{\psi_R(x)\ave{\psi_R}_{R_w}}\lesssim\mu(R_w)^{-1}\), whereas for \(x\in R\setminus R_w\),
\begin{equation*}
  \abs{\psi_R(x)}\lesssim\frac{\mu(R_w)^{1/2}}{\mu(R)},\qquad
  \abs{\psi_R(x)\ave{\psi_R}_{R_w}}\lesssim \frac{1}{\mu(R)}.
\end{equation*}

If \(v\neq w\), then for all \(x\in R\)
\begin{equation*}
  \abs{\ave{\psi_R}_{R_w}}\cdot\Norm{\psi_R}{\infty}
  \lesssim\frac{\mu(R_v)^{1/2}}{\mu(R)}\cdot\frac{1}{\mu(R_v)^{1/2}}
  =\frac{1}{\mu(R)},
\end{equation*}
which is even slightly better than the worst case scenario \(v=w\).
\end{proof}

To prove~\eqref{eq:containedGoal}, consider the part of the sum with \(w\in\{1,\ldots,2^N\}\) fixed and \(Q\subset R_w\). Let further \(n\in\{r+1,r+2,\ldots\}\) be fixed, and \(\ell(Q)=2^{-n}\ell(R)\).

By~\eqref{eq:firstBy}, one gets
\begin{equation}\label{eq:containedStarts}\begin{split}
  &\Babs{\sum_{k\in\Z}\sum_{R\in\mathscr{D}_k'}
    \sum_{\ontop{Q\in\mathscr{D}^{\good}_{k-n}}{Q\subset R_w}}
    \pair{g}{\psi_R}\tilde{T}_{RQ}\pair{\varphi_Q}{f}} \\
  &\lesssim\Norm{g}{}
   \BNorm{\sum_{k\in\Z}\radem_k\sum_{R\in\mathscr{D}_k'}
     \sum_{\ontop{Q\in\mathscr{D}^{\good}_{k-n}}{Q\subset R_w}}
     \psi_R\, \tilde{T}_{RQ}\pair{\varphi_Q}{f}}{L^p(\prob\otimes\mu;X)}.
\end{split}\end{equation}

For each \(k\in\Z\) and \(R\in\mathscr{D}_{k,\good}'\), define the kernels
\begin{equation*}\begin{split}
  K_R^{\operatorname{out}}(x,y) &:=2^{n\alpha/2}
   \sum_{\ontop{Q\in\mathscr{D}^{\good}_{k-n}}{Q\subset R_w}}
    \mu(R)1_{R\setminus R_w}(x)\psi_R(x)
    \tilde{T}_{RQ}\varphi_Q(y)b_1(y), \\
  K_R^{\operatorname{in}}(x,y) &:=2^{n\alpha/2}
    \sum_{\ontop{Q\in\mathscr{D}^{\good}_{k-n}}{Q\subset R_w}}
    \mu(R_w)1_{R_w}(x)\psi_R(x)
    \tilde{T}_{RQ}\varphi_Q(y)b_1(y).
\end{split}\end{equation*}
Then \(K_R^{\operatorname{out}}\) is supported in \(R\times R\) and \(K_{R}^{\operatorname{in}}\) in \(R_w\times R_w\), and they satisfy
\begin{equation*}
  \Norm{K_R^{\operatorname{out}}}{\infty}+\Norm{K_R^{\operatorname{in}}}{\infty}\lesssim 1
\end{equation*}
by Lemma~\ref{lem:containedMatrix}. Moreover,
\begin{equation}\label{eq:containedExpand}\begin{split}
  &\sum_{k\in\Z}\radem_k\sum_{R\in\mathscr{D}_{k,\good}'}
     \sum_{\begin{smallmatrix}Q\in\mathscr{D}^{\good}_{k-n}\\ Q\subset R_w\end{smallmatrix}}
     \psi_R(x) \tilde{T}_{RQ}\pair{\varphi_Q}{f} \\
  &=2^{-n\alpha/2}\sum_{k\in\Z}\radem_k\sum_{R\in\mathscr{D}_{k,\good}'}
    \frac{1_R(x)}{\mu(R)}\int_R K_R^{\operatorname{out}}(x,y)
    \frac{1_R\D_{k-n}^{b_1}f}{b_1}(y)\ud\mu(y) \\
  &\quad+2^{-n\alpha/2}\sum_{k\in\Z}\radem_k\sum_{R\in\mathscr{D}_{k,\good}'}
    \frac{1_{R_w}(x)}{\mu(R_w)}\int_{R_w} K_R^{\operatorname{in}}(x,y)\frac{1_{R_w}\D_{k-n}^{b_1}f}{b_1}(y)\ud\mu(y). 
\end{split}\end{equation}

The aim is again to apply Corollary~\ref{cor:trick}. However, there is an initial obstruction (which I had originally overlooked, and which was kindly pointed to me by Antti V\"ah\"akangas): the kernel functions \(K_R^{\operatorname{out}}\) and \(K_R^{\operatorname{in}}\) are adapted to the dyadic cubes \(R\in\mathscr{D}_k'\) and \(R_w\in\mathscr{D}_{k-1}'\), whereas $\D_{k-n}^{b_1}f/b_1$ is constant on the cubes \(Q\in\mathscr{D}_{k-n-1}\), and this dyadic system is not a refinement of the other one. The way out is to define somewhat finer partitions of \(\R^N\) by
\begin{equation*}
  \mathscr{E}_k':=\{S\cap Q\neq\varnothing:S\in\mathscr{D}_k',Q\in\mathscr{D}_{k+r+1}\},
\end{equation*}
so that the generated \(\sigma\)-algebras \(\sigma(\mathscr{E}_k')=\sigma(\mathscr{D}_k',\mathscr{D}_{k+r+1})\) again form a filtration.

The important point is to observe that
\begin{equation}\label{eq:goodPreserved}
  \mathscr{D}_{k,\good}'\subset\mathscr{E}_k',\quad
  \{R_w\in\mathscr{D}_{k-1}':R_w\subset R\in\mathscr{D}_{k,\good}'\}\subset\mathscr{E}_{k-1}'.
\end{equation}
Indeed, \(R\in\mathscr{D}_{k,\good}'\) means in particular that \(R\) does not intersect the boundary of any \(Q\in\mathscr{D}_{k+r}\), and hence also \(R_w\subset R\) cannot intersect the boundary of any \(Q\in\mathscr{D}_{(k-1)+(r+1)}\). On the right of \eqref{eq:containedExpand}, one may hence replace the first summation condition \(R\in\mathscr{D}_{k,\good}'\) by \(R\in\mathscr{E}_k'\), simply setting \(K_R^{\operatorname{out}}:=0\) for all the new sets \(R\) thus introduced. The second summation in \eqref{eq:containedExpand} could be equally well taken with respect to the summation variable \(R_w\), and \eqref{eq:goodPreserved} allows to write the summation condition as \(R_w\in\mathscr{E}_{k-1}'\), again defining the newly introduced kernels \(K_R^{\operatorname{in}}:=0\). Since $\D_{k-n}^{b_1}f/b_1$ is constant on the sets $Q\in\mathscr{D}_{k-n-1}$, it is \emph{a fortiori} constant on the smaller sets $S\cap Q\in\mathscr{E}_{k-n-r-2}'$.

Splitting the \(k\)-series in \eqref{eq:containedExpand} into \(n+r+2\) subseries according to \(k\equiv k_0\mod n+r+2\), everything is now ready for the application of Corollary~\ref{cor:trick}, just as in Section~\ref{sec:separated}. This provides the upper bound \(C2^{-n\alpha/2}(n+r+2)\) for the \(L^p(\prob\otimes\mu;X)\)-norm in \eqref{eq:containedStarts}, and it is possible to sum over \(n\in\{r+1,r+2,\ldots\}\) and \(w\in\{1,\ldots,2^N\}\) to conclude \eqref{eq:containedSeries}.


\section{The correction term as a paraproduct}\label{sec:para}

It is time to take up the consideration of the correction term subtracted at the beginning of the previous section, namely
\begin{equation}\label{eq:correctionTerm}
\begin{split}
  &\sum_{R\in\mathscr{D}_{\good}'}\sum_{\substack{Q\in\mathscr{D}_{\good};Q\subset R \\ \ell(Q)<2^{-r}\ell(R)}}\sum_{u,v}
    \pair{g}{\psi_{R,v}}\ave{\psi_{R,v}}_Q\pair{b_2}{T(b_1\varphi_{Q,u})}\pair{\varphi_{Q,u}}{f} \\
   &=\sum_{Q\in\mathscr{D}_{\good}}\Big(\sum_{\substack{R\in\mathscr{D}_{\good}',R\supset Q \\ 2^r\ell(Q)<\ell(R)\leq 2^m}}\ave{\D_R^{b_2}g/b_2}_Q \\
   &\qquad\qquad\qquad   +\sum_{\substack{R\in\mathscr{D}_{\good}',R\supset Q \\ 2^r\ell(Q)<\ell(R)=2^m}}\ave{\Exp_R^{b_2}g/b_2}_Q\Big)
          \pair{T^*b_2}{\D_Q^{b_1}f},
\end{split}
\end{equation}
where the previously suppressed variables $u,v$, as well as the implicit restriction to side-lengths at most $2^m$, have been momentarily taken back into consideration.
Were it not for the restriction to the good cubes $R$ only, recalling the definition of $\D_k^{b_2}$ as a difference of two $\Exp_k^{b_2}$'s, the inner summation on the right would be a telescopic one, collapsing to
\begin{equation*}
  \ave{\Exp_{R}^{b_2}g/b_2}_{Q}=\ave{g}_{R}/\ave{b_2}_{R},
\end{equation*}
where $R\in\mathscr{D}'$ is the unique cube such that $\ell(R)=2^r\ell(Q)$ and $R\supset Q$.

It is here that, for the first time, the random choice of the dyadic systems comes to rescue. The double sum on the right of \eqref{eq:correctionTerm} is of the form considered in Lemma~\ref{lem:underExpectations}, which states that, on average, the restriction to good cubes is irrelevant as far as the bigger cubes in such a sum are concerned. More precisely, using the observation concerning the telescopic series, it follows that
\begin{equation*}
  \Exp_{\beta\tilde\beta}\eqref{eq:correctionTerm}
  =\pi_{\good}\Exp_{\beta}\sum_{Q\in\mathscr{D}_{\good}}\sum_{\substack{R\in\mathscr{D}' ,R\supset Q\\ \ell(R)=2^r\ell(Q)}}
      \frac{\ave{g}_R}{\ave{b_2}_R}\pair{T^*b_2}{\D_Q^{b_1}f},
\end{equation*}
where the result was intensionally written as a double sum, even though the cube $R$ is uniquely determined by $Q$. This was done in order to realize that Lemma~\ref{lem:underExpectations} may be applied again, leading back to the restriction into good $R$ only, but now in the collapsed series obtained:
\begin{equation*}
\begin{split}
   \Exp_{\beta\tilde\beta}\eqref{eq:correctionTerm}
   &=\Exp_{\beta\tilde\beta}\sum_{Q\in\mathscr{D}_{\good}}\sum_{\substack{R\in\mathscr{D}_{\good}' ,R\supset Q\\ \ell(R)=2^r\ell(Q)}}
      \frac{\ave{g}_R}{\ave{b_2}_R}\pair{T^*b_2}{\D_Q^{b_1}f} \\
    &=\Exp_{\beta\tilde\beta}\sum_{R\in\mathscr{D}_{\good}'}\sum_{\substack{Q\in\mathscr{D}_{\good} ,Q\subset R \\ \ell(Q)=2^{-r}\ell(R)}}\sum_u
      \frac{\ave{g}_R}{\ave{b_2}_R}\pair{T^*b_2}{b_1\varphi_{Q,u}}\pair{\varphi_{Q,u}}{f}.
\end{split}
\end{equation*}

Now that this computation is done, the summation over $u$ is once again suppressed, one forgets about the average $\Exp_{\beta\tilde\beta}$, and turns into considering the above expression for an arbitrary but fixed choice of the $\beta$ parameters. The above sum will be interpreted as a pairing $\pair{\Pi_2 g}{f}$, where
\begin{equation}\label{eq:Pi2}
  \Pi_2 g:=\sum_{R\in\mathscr{D}_{\good}'}\sum_{\substack{Q\in\mathscr{D}_{\good} ,Q\subset R \\ \ell(Q)=2^{-r}\ell(R)}}
      \frac{\ave{g}_R}{\ave{b_2}_R}\pair{T^*b_2}{b_1\varphi_Q}\cdot\varphi_Q
\end{equation}
is the \emph{paraproduct}, similar to the one in \cite{NTV:Tb}, Section~7.1, but also different in certain respects. Nazarov et al.\ allowed somewhat more summands on the right by imposing only the condition \(\dist(Q,R^c)\geq\lambda\ell(Q)\) for the two cubes $Q\in\mathscr{D}$ and $R\in\mathscr{D}'$ with side-lengths related as above. This is a consequence of \(Q\in\mathscr{D}_{\good}\), which implies that $\dist(Q,\partial R)\geq\frac12\ell(Q)^{\gamma}\ell(R)^{1-\gamma}\geq\frac12\cdot 4\lambda\ell(Q)$, and the difference between these two conditions is inessential for the present considerations.

More important, however, is the fact that one can also restrict the bigger cube $R$ to be a good one here. This condition was absent from the paraproduct treated in earlier versions of this paper, which I only managed to do with the help of Therem~\ref{thm:PfRMF} and the additional assumption of the RMF property of the Banach space $X^*$. I still do not know whether the variant of the paraproduct with $R\in\mathscr{D}'$  could be treated without the RMF condition, but the reduction above shows that one only needs to consider $\Pi_2$ as in \eqref{eq:Pi2}.

This paraproduct is related to, and should be controlled in terms of the BMO function \(T^* b_2\). The membership in BMO will be exploited via the following estimate:

\begin{lemma}\label{lem:NTV51}
For \(p\in(1,\infty)\) and \(h\in\BMO_{\lambda}^p(\mu)\),
\begin{equation*}
  \BNorm{
    \sum_{\ontop{Q\in\mathscr{D}_{\good};Q\subset R}{\ell(Q)\leq 2^{-r}\ell(R)}}
    \radem_Q\pair{h}{b_1\varphi_Q}\varphi_Q}{L^p(\prob\otimes\mu)}
  \lesssim \mu(R)^{1/p}\Norm{h}{\BMO_{\lambda}^p(\mu)}.
\end{equation*}
\end{lemma}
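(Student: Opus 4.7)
By Proposition~\ref{prop:Haar}, each $\varphi_Q$ is supported in $Q$ and satisfies $\int b_1\varphi_Q\ud\mu=0$, and the definition of the twisted martingale difference yields the key identity
\[
  \D^{b_1}_Q(hb_1) = b_1\varphi_Q\pair{h}{b_1\varphi_Q}.
\]
Combined with the cancellation, this gives $\pair{h}{b_1\varphi_Q} = \pair{1_R(h-\ave{h}_R)}{b_1\varphi_Q}$ for every $Q\subset R$, so the expression inside the $L^p(\prob\otimes\mu)$-norm of the lemma equals
\[
  \frac{1}{b_1}\sum_{Q}\radem_Q\D^{b_1}_Q\bigl(1_R(h-\ave{h}_R)b_1\bigr),
\]
where $Q$ runs over the stated subcollection of good cubes.

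Next, I would invoke the contraction principle, replacing the Rademacher signs on the subcollection by $\{0,\pm 1\}$-valued coefficients extended to all of $\mathscr{D}$, and absorb the bounded factors $b_1^{\pm 1}$ using $\Norm{b_1}{\infty}\leq 1$ and the weak accretivity $\abs{\ave{b_1}_Q}\geq\delta$. The randomized unconditionality of Proposition~\ref{prop:UC}, applied in the scalar-valued case $X=\C$ (where UMD is trivial), then yields
\[
  \text{LHS}\lesssim \Norm{1_R(h-\ave{h}_R)}{L^p(\mu)}=\Big(\int_R\abs{h-\ave{h}_R}^p\ud\mu\Big)^{1/p}\leq\mu(\lambda R)^{1/p}\Norm{h}{\BMO^p_\lambda(\mu)},
\]
the last step being the definition~\eqref{eq:defBMO} applied at the cube $R$.

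The main obstacle is that this crude application gives $\mu(\lambda R)^{1/p}$ rather than the $\mu(R)^{1/p}$ claimed in the lemma---a genuine difference in the non-doubling setting. The intended refinement is to view the generation-indexed sums
\[
  \theta_j:=\sum_{Q\in\mathscr{D}_{j,\good}}\pair{h}{b_1\varphi_Q}\varphi_Q,\qquad j\in\Z,
\]
as a Carleson sequence with respect to the dyadic filtration $\vec{\mathscr{F}}$, and to establish $\Norm{\{\theta_j\}}{\Car^p(\vec{\mathscr{F}})}\lesssim\Norm{h}{\BMO^p_\lambda(\mu)}$. Since the Carleson norm of Section~3 is normalized by $\mu(A)^{-1/p}$ over atoms $A$ of $\vec{\mathscr{F}}$---in particular by $\mu(R)^{-1/p}$ when $A=R$---and since the sum in the lemma coincides with $1_R\sum_{j\leq\log_2\ell(R)-r}\radem_j\theta_j$ (using, as in the first lemma of Section~2, that random signs indexed by cubes and by generations produce identical $L^p$-norms), such a Carleson bound directly delivers the desired $\mu(R)^{1/p}$ normalization. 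The John--Nirenberg equivalence of $\Car^p$ norms from Proposition~\ref{prop:JN} is the technical device that lets the argument in the preceding paragraph---which naturally pairs $p$-averages against $\mu(\lambda R)$---be transferred to an $L^1$-Carleson estimate at the scale of $\mu(R)$, eliminating the unwanted $\lambda$-expansion.
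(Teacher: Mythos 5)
Your first paragraph is on the right track: the identity $\D^{b_1}_Q(hb_1)=b_1\varphi_Q\pair{h}{b_1\varphi_Q}$, the cancellation $\int b_1\varphi_Q\,\ud\mu=0$, and the resulting localized estimate $\text{LHS}\lesssim\Norm{1_R(h-\ave{h}_R)}{L^p(\mu)}\leq\mu(\lambda R)^{1/p}\Norm{h}{\BMO^p_\lambda(\mu)}$ are all correct, and you rightly flag that $\mu(\lambda R)^{1/p}$ is not the bound claimed.

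The proposed repair, however, does not close the gap. Proposition~\ref{prop:JN} asserts the equivalence of the $\Car^p$ norms across different exponents $p$, where every one of these norms is normalized by $\mu(A)^{-1/p}$ (no $\lambda$-expansion). What your naive estimate yields is a \emph{$\lambda$-weighted} Carleson bound, $\sup_A\mu(\lambda A)^{-1/p}\Norm{1_A\sum_{j\leq k}\radem_j\theta_j}{p}\lesssim\Norm{h}{\BMO^p_\lambda(\mu)}$, and the ratio $\mu(\lambda A)/\mu(A)$ can be arbitrarily large for a non-doubling $\mu$. Changing $p$ leaves this ratio unaffected; there is no ``transfer to an $L^1$-Carleson estimate at the scale of $\mu(R)$'' because the $\lambda$-expansion is a geometric obstruction, not an exponent mismatch. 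In short, you have not shown that the $\Car^p$ norm (with $\mu(A)^{-1/p}$) is finite, only the weighted version, so Proposition~\ref{prop:JN} has nothing to apply to.

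What is actually needed---and what the paper does---is a Whitney covering argument that finally uses the goodness of $Q$, which your proposal never invokes. By Lemma~\ref{lem:newGood} and \eqref{eq:rAtLeast}, a good cube $Q\subset R$ with $\ell(Q)\leq 2^{-r}\ell(R)$ satisfies $\dist(Q,R^c)\geq\tfrac12\ell(Q)^\gamma\ell(R)^{1-\gamma}\geq\lambda\ell(Q)$, so $Q$ sits inside a maximal $S\in\mathscr{D}$ with $\ell(S)\leq 2^{-r}\ell(R)$ and $\dist(S,R^c)\geq\lambda\ell(S)$. These Whitney cubes $S\in\mathscr{W}$ tile the relevant region, their expansions $\lambda S$ lie in $R$ with bounded overlap $\sum_{S\in\mathscr{W}}1_{\lambda S}\lesssim 1_R$, and applying your local estimate at the level of each $S$ (where $\pair{h}{b_1\varphi_Q}=\pair{1_S(h-\ave{h}_S)b_1}{\varphi_Q}$ for $Q\subseteq S$) gives $\Norm{\sum_{Q\subseteq S}\radem_Q\pair{h}{b_1\varphi_Q}\varphi_Q}{L^p(\prob\otimes\mu)}^p\lesssim\mu(\lambda S)$. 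Summing over $S\in\mathscr{W}$ and using the bounded overlap gives $\sum_S\mu(\lambda S)\lesssim\mu(R)$, which is precisely where the $\lambda$ is absorbed. Without exploiting goodness in this way the lemma cannot be obtained; the $\lambda$-expansion genuinely costs something for cubes near $\partial R$, and goodness is exactly the hypothesis guaranteeing there are no such cubes in the sum.
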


\begin{proof}
Consider the Whitney-type covering \(\mathscr{W}\) of \(R\) consisting of the maximal dyadic cubes \(S\in\mathscr{D}\) subject to the conditions \(\ell(S)\leq 2^{-r}\ell(R)\) and
\begin{equation}\label{eq:separation}
  \dist(S,R^c)\geq\lambda\ell(S).
\end{equation}
Then the expanded cubes \(\lambda S\) satisfy the bounded overlapping property
\begin{equation*}
  \sum_{S\in\mathscr{W}}1_{\lambda S}\leq C 1_R.
\end{equation*}

If \(Q\) is one of the cubes appearing in the sum on the left of the assertion, the goodness of \(Q\) implies that
\begin{equation*}
  \dist(Q,R^c)\geq\frac12\ell(Q)^{\gamma}\ell(R)^{1-\gamma}\geq 2^{-1}2^{r(1-\gamma)}\ell(Q)\geq\lambda\ell(Q),
\end{equation*}
where the last estimate used \eqref{eq:rAtLeast}. Hence \(Q\) is contained in a maximal cube with this property, i.e., in some \(S\in\mathscr{W}\).

Without loss of generality, take \(\Norm{h}{\BMO_{\lambda}^p(\mu)}=1\). By the definition of \(\BMO_{\lambda}^p(\mu)\) and the boundedness of \(b_1\), there holds
\begin{equation*}
  \Norm{1_S\big(h-\ave{h}_S\big)b_1}{L^p(\mu)}^p\leq\mu(\lambda S).
\end{equation*}

Consider the ``Haar'' coefficient \(\pair{1_S\big(h-\ave{h}_S\big)b_1}{\varphi_Q}\). If \(Q\subseteq S\), this coefficient equals \(\pair{h}{b_1\varphi_Q}\), since \(b_1\varphi_Q\) is supported on \(Q\subseteq S\) and has a vanishing integral. Hence it follows from unconditionality that
\begin{equation}\label{eq:BMOuc}
  \BNorm{\sum_{\ontop{Q\in\mathscr{D}_{\good}}{Q\subseteq S}}
    \radem_Q\pair{h}{b_1\varphi_Q}\varphi_Q}{L^p(\prob\otimes\mu)}^p\lesssim
   \mu(\lambda S).
\end{equation}
But the sum over \(S\in\mathscr{W}\) of the left side of the previous estimate coincides with the left side of the assertion by the observation that every \(Q\) there is contained in exactly one \(S\in\mathscr{W}\). On the other hand, the sum over the right hand side is
\begin{equation*}
  \sum_{S\in\mathscr{W}}\mu(\lambda S)=\int\sum_{S\in\mathscr{W}}1_{\lambda S}\ud\mu\lesssim\int 1_R\ud\mu=\mu(R),
\end{equation*}
and this completes the proof.
\end{proof}

Now everything has been prepared for the main result of this section:

\begin{theorem}\label{thm:paraproduct}
Under the standing hypotheses, there holds
\begin{equation*}
  \Norm{\Pi_2 g}{p'}
  \lesssim\Norm{T^* b_2}{\BMO_{\lambda}^{p'}(\mu)}\Norm{g}{p'}
  \lesssim \Norm{g}{p'}.
\end{equation*}
\end{theorem}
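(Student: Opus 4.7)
The second inequality, $\Norm{T^*b_2}{\BMO^{p'}_\lambda}\lesssim 1$, is an instance of the John--Nirenberg inequality for the non-doubling $\BMO^p_\lambda$ scale (Tolsa): the standing hypothesis $\Norm{T^*b_2}{\BMO^1_\lambda(\mu)}\leq 1$ in the $Tb$ theorem upgrades the $L^1$-integrability to $L^{p'}$-integrability of the local oscillations, possibly at the cost of slightly enlarging $\lambda$, which is absorbed by our freedom in the choice of this fixed parameter.

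For the first inequality, I would argue by duality, writing
\[
  \Norm{\Pi_2 g}{p'}
  =\sup_{\Norm{f}{p}\leq 1}\Babs{\pair{f}{\Pi_2 g}}
  =\sup_{\Norm{f}{p}\leq 1}\Babs{\sum_{Q\in\mathscr{D}_{\good}}c_Q\,\alpha_Q\,\pair{\varphi_Q}{f}},
\]
with $\alpha_Q:=\pair{T^*b_2}{b_1\varphi_Q}$ and $c_Q:=\ave{g}_R/\ave{b_2}_R\cdot 1_{\good}(R)$, where $R\in\mathscr{D}'$ is the (unique) ancestor of $Q$ with $\ell(R)=2^r\ell(Q)$. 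Using the identity $\int b_1\varphi_Q^2\ud\mu=1$ built into the Haar construction (Proposition~\ref{prop:Haar}) together with the randomization trick of~\eqref{eq:firstBy}, this pairing is dominated by
\[
  \Norm{f}{p}\cdot\BNorm{\sum_{k\in\Z}\radem_k\sum_{Q\in\mathscr{D}_{k,\good}}c_Q\,\alpha_Q\,b_1\varphi_Q}{L^{p'}(\prob\otimes\mu)},
\]
with the factor involving $f$ absorbed by the unconditionality of Proposition~\ref{prop:UC}.

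The residual Rademacher sum has exactly the structure of the Carleson paraproduct $Pg$ of~\eqref{eq:Pf}. Indeed, $c_Q$ is constant on each $Q\in\mathscr{D}_k$ and coincides there with the quotient $\Exp^{\mathscr{D}'}_{k+r}g/\Exp^{\mathscr{D}'}_{k+r}b_2$ (times $1_{\good}$); with the filtration $\mathscr{F}_k:=\sigma(\mathscr{D}_{k-1},\mathscr{D}'_{k+r})$ the coefficients
\[
  \theta_k:=\frac{1_{\good}}{\Exp^{\mathscr{D}'}_{k+r}b_2}\sum_{Q\in\mathscr{D}_{k,\good}}\alpha_Q\,b_1\varphi_Q
\]
are $\mathscr{F}_k$-measurable (since $\varphi_Q$ is constant on the children of $Q$), and the Rademacher sum equals $\sum_k\radem_k\theta_k\Exp^{\mathscr{D}'}_{k+r}g$. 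Theorem~\ref{thm:Pf}, applied with $X_1=X_2=X_3=\C$ (trivially UMD), then yields the desired $L^{p'}$-estimate in terms of the Carleson norm $\Norm{\{\theta_k\}}{\Car^1(\vec{\mathscr{F}})}$ and $\Norm{g}{p'}$.

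The final step is to bound this Carleson norm by $\Norm{T^*b_2}{\BMO^{p'}_\lambda}$. Given $A\in\mathscr{F}_k^+$, the quantity $\Norm{1_A\sum_{j\leq k}\radem_j\theta_j}{L^1(\prob\otimes\mu)}$ is estimated by decomposing $A$ into pieces fitting inside cubes arising from the common refinement of $\mathscr{D}_{k-1}$ and $\mathscr{D}'_{k+r}$ and applying Lemma~\ref{lem:NTV51} to $h:=T^*b_2$ separately on each piece; summing the Whitney-type bounds yields the desired Carleson estimate. The chief technical obstacle is precisely this book-keeping across the \emph{two} independent dyadic systems: the Haar side of each $\theta_k$ is adapted to $\mathscr{D}$ while the averaging that delivers $g$ is adapted to $\mathscr{D}'$, with goodness conditions on ancestors in the other system entangling the two. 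Once the combined filtration $\sigma(\mathscr{D}_{k-1},\mathscr{D}'_{k+r})$ is seen to absorb both types of measurability simultaneously (at the cost of implicit constants depending on~$r$), all the pieces fit together to give $\Norm{\Pi_2 g}{p'}\lesssim\Norm{T^*b_2}{\BMO^{p'}_\lambda}\Norm{g}{p'}$, completing the argument.
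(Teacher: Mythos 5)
Your plan misses the central technical difficulty that the paper's proof is organized around, and also makes a claim about the second inequality that the paper explicitly warns against.

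\textbf{The second inequality.} You attribute $\Norm{T^*b_2}{\BMO^{p'}_\lambda(\mu)}\lesssim 1$ to a non-doubling John--Nirenberg inequality for the $\BMO^p_\lambda$ scale. This is incorrect: for a general non-doubling measure, $\BMO^1_\lambda$ and $\BMO^{p'}_\lambda$ are \emph{not} equivalent, and the paper states this explicitly in the last paragraph of the proof of Theorem~\ref{thm:paraproduct} (``\ldots which would not be true for an arbitrary $h\in\BMO^1_\lambda(\mu)$ in place of $T^*b_2$''). The correct reference is \cite{NTV:Tb}, Section 2.3, where the self-improvement of the $\BMO$ condition for $T^*b_2$ specifically uses the Calder\'on--Zygmund kernel structure, the weak boundedness property and the accretivity of $b_2$, not merely the $\BMO^1_\lambda$ norm.

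\textbf{The first inequality.} After the randomization/unconditionality step (which is fine), you claim the residual Rademacher sum ``has exactly the structure of the Carleson paraproduct $Pg$ of~\eqref{eq:Pf}'' with the filtration $\mathscr{F}_k:=\sigma(\mathscr{D}_{k-1},\mathscr{D}'_{k+r})$. This does not hold. Theorem~\ref{thm:Pf} needs \emph{both} $\theta_k=\Exp_k\theta_k$ and the formula $Pg=\sum_k\radem_k\theta_k\Exp_k g$ to be read with the \emph{same} conditional expectation $\Exp_k$. Your $\theta_k$ (involving $\varphi_Q$ for $Q\in\mathscr{D}_k$, hence constant on $\mathscr{D}_{k-1}$-atoms but not on $\mathscr{D}'_{k+r}$-atoms) is $\mathscr{F}_k$-measurable only for the \emph{finer} $\mathscr{F}_k=\sigma(\mathscr{D}_{k-1},\mathscr{D}'_{k+r})$, whereas the averaging factor $\Exp^{\mathscr{D}'}_{k+r}g$ is the conditional expectation for the \emph{coarser} $\sigma$-algebra $\sigma(\mathscr{D}'_{k+r})$. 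Replacing $\Exp^{\mathscr{D}'}_{k+r}g$ by $\Exp_{\mathscr{F}_k}g$ changes the quantity, so the sum is simply not of the form~\eqref{eq:Pf}. Your closing remark that ``the combined filtration $\sigma(\mathscr{D}_{k-1},\mathscr{D}'_{k+r})$ is seen to absorb both types of measurability simultaneously'' is precisely the gap: it does not, and this is the very obstruction that the paper's tangent martingale trick (Theorem~\ref{thm:trick}) is designed to remove. In the actual proof, the fine $\mathscr{D}$-scale dependence of $\Phi_R$ is transported into an auxiliary variable $y_R$, so that what remains in $x$ is $1_R(x)\,\Phi_R(y_R)$, which \emph{is} $\sigma(\mathscr{E}'_k)$-measurable (with $\mathscr{E}'_k$ the common refinement partition, containing the good $R\in\mathscr{D}'_{\good,k}$ as atoms); only then does Theorem~\ref{thm:Pf} apply, with $X_1=X^*$, $X_3=L^q(\nu;X^*)$, $X_2=L^q(\nu)$. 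Relatedly, your choice $X_1=X_2=X_3=\C$ in Theorem~\ref{thm:Pf} is inconsistent with $g$ being $X^*$-valued; even in a fixed-up version you would need $X_1=X_3=X^*$ and $X_2=L^q(\nu)$ (not $\C$) to carry the extra $\nu$-variable introduced by the trick.

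Your final step (the Whitney-type estimation of the Carleson norm via Lemma~\ref{lem:NTV51}) is in the right spirit and matches the paper's use of that lemma, but it is downstream of the measurability obstruction and cannot be reached by the route you describe.
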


By the considerations at the beginning of the section, this shows that
\begin{equation*}
  \Babs{\Exp_{\beta\tilde\beta}
    \sum_{R\in\mathscr{D}_{\good}'}\sum_{\substack{Q\in\mathscr{D}_{\good};Q\subset R \\ \ell(Q)<2^{-r}\ell(R)}}
    \pair{g}{\psi_{R}}\ave{\psi_{R}}_Q\pair{b_2}{T(b_1\varphi_{Q})}\pair{\varphi_{Q}}{f}}
   \lesssim\Norm{g}{p'}\Norm{f}{p}.
\end{equation*}

\begin{proof}
Denote for short $q:=p'$ and
\begin{equation}\label{eq:PhiR}
  \Phi_R:=
     \sum_{\substack{Q\in\mathscr{D}_{\good};Q\subset R \\ \ell(Q)=2^{-r}\ell(R)}}
    \pair{b_2}{T(b_1\varphi_Q)}\frac{1}{\ave{b_2}_R}\cdot\varphi_Q
\end{equation}
Because of the unconditionality of the system \(\{\varphi_Q\}_{Q\in\mathscr{D}}\), it follows that
\begin{equation}\label{eq:PiVsAbstractParapr}
  \Norm{\Pi_2 g}{L^q(\mu;X^*)}
  \lesssim\BNorm{\sum_{R\in\mathscr{D}_{\good}'}\radem_R\Phi_R\ave{g}_{R}}{L^q(\prob\otimes\mu;X^*)},
\end{equation}
and this is of the abstract paraproduct form~\eqref{eq:Pf}, although not yet with the measurability condition required to apply Theorem~\ref{thm:Pf}. This deficit will be repaired with the help of the refined filtrations as in the previous section, and a direct application of the tangent martingale Theorem~\ref{thm:trick} (as opposed to its indirect use via Corollary~\ref{cor:trick} like in the last two sections).

Turning into the details, let
\begin{equation*}
  \mathscr{E}_k:=\{Q\cap R\neq\varnothing;Q\in\mathscr{D}_k,R\in\mathscr{D}_{k+r+1}'\}\supset\{Q\in\mathscr{D}_k;Q^{(1)}\in\mathscr{D}_{\good,k}\}
\end{equation*}
and
\begin{equation*}
  \mathscr{E}_k':=\{Q\cap R\neq\varnothing;Q\in\mathscr{D}_{k+r},R\in\mathscr{D}_k'\}\supset\mathscr{D}_{\good,k}',
\end{equation*}
where the containments are immediate from Lemma~\ref{lem:newGood}. A subsequence of the corresponding $\sigma$-algebras form a filtration, as
\begin{equation*}
  \sigma(\mathscr{E}_{k+r})\subset\sigma(\mathscr{E}_k')\subset\sigma(\mathscr{E}_{k-r-1}).
\end{equation*}

Defining $\Phi_R:=0$ whenever $R\in\mathscr{E}_k'\setminus\mathscr{D}_{\good,k}'$, the estimate in \eqref{eq:PiVsAbstractParapr} may be continued with
\begin{equation*}
\begin{split}
  \Norm{\Pi_2 g}{L^q(\mu;X^*)} 
  &\lesssim\BNorm{\sum_k\radem_k\sum_{R\in\mathscr{E}_{k}'}\Phi_R(x)\ave{g}_{R}}{L^q(\ud\prob(\radem)\ud\mu(x);X^*)} \\
  &\lesssim\sum_{k_0=0}^{2r}\BNorm{\sum_{\substack{k\equiv k_0 \\ \operatorname{mod} 2r+1}}\radem_k
       \sum_{R\in\mathscr{E}_{k}'}\Phi_R(y_R)1_R(x)\ave{g}_{R}}{L^q(\ud\prob(\radem)\ud\mu(x)\ud\nu(y);X^*)},
\end{split}
\end{equation*}
where the measure $\nu=\nu_{k_0}$ on the product space $\prod_{k\equiv k_0}\prod_{R\in\mathscr{E}_k'}R$ (with typical point denoted by $y=(y_R)_R$) is defined as in Theorem~\ref{thm:trick}. The last estimate was an application of the mentioned theorem to each of the subseries with a fixed $k_0$, observing that each $\Phi_R$ is supported on $R\in\mathscr{E}_k'$, zero on $Q\in\mathscr{D}_{\bad,k-r}$ and contant on $Q\in\mathscr{D}_{k-r-1}$ when $Q^{(1)}\in\mathscr{D}_{\good,k-r}$; hence constant on$S\in\mathscr{E}_{k-2r-1}'$.

Now each
\begin{equation*}
  \sum_{R\in\mathscr{E}_{k}'}\Phi_R(y_R)1_R(x),
\end{equation*}
interpreted as a function of $x\in\R^N$ with values in $L^q(\ud\nu(y))$, is obviously $\sigma(\mathscr{E}_k')$-measurable, so that Theorem~\ref{thm:Pf}  is applicable, taking $X_1=X^*$, $X_3=L^q(\nu;X^*)$ and $X_2=L^q(\nu)\subset\bddlin(X_1,X_3)$ in a canonical way. It guarantees that
\begin{equation}\label{eq:applyPf}
\begin{split}
  &\Norm{\Pi_2 g}{L^q(\mu;X^*)}  \\
  &\lesssim\sum_{k_0}\sup_{\substack{k\equiv k_0 \\ S\in\mathscr{E}_k'}}
    \frac{\Norm{g}{L^q(\mu;X^*)}}{\mu(S)^{1/q}}
      \BNorm{\sum_{\substack{j\equiv k_0 \\ j\leq k}}\radem_j\sum_{\substack{R\in\mathscr{E}_j' \\ R\subseteq S}}\Phi_R(y_R)1_R(x)}{
        L^q(\ud\prob(\radem)\ud\mu(x)\ud\nu(y))}
\end{split}
\end{equation}
and another application of Theorem~\ref{thm:trick} permits the replacement of $\Phi_R(y_R)1_R(x)$ by simply $\Phi_R(x)$. Also, the summation condition $R\in\mathscr{E}_j'$ may obviously be replaced by $R\in\mathscr{D}_{\good,j}'$, as the remaining terms are zero by definition. And then one rearranges the summation in terms of the maximal cubes $P\in\mathscr{D}_{\good,j}'$ appearing in this summation. The additivity of the integral on disjointly supported functions implies that
\begin{equation*}
\begin{split}
  &\BNorm{\sum_{\substack{j\equiv k_0 \\ j\leq k}}\radem_j\sum_{\substack{R\in\mathscr{E}_j' \\ R\subseteq S}}\Phi_R}{
        L^q(\prob\otimes\mu)}^q
  =\sum_P \BNorm{\sum_{\substack{R\in\mathscr{D}_{\good}' \\ \log_2\ell(R)\equiv k_0 \\ R\subseteq P}}\radem_R\Phi_R}{
        L^q(\prob\otimes\mu)}^q \\
   &\leq\sum_P\BNorm{\sum_{\substack{Q\in\mathscr{D}_{\good};\,Q\subset P \\ \ell(Q)\leq 2^{-r}\ell(P) \\ \log_2\ell(Q)\equiv k_0-r}}
        \radem_Q\pair{T^*b_2}{b_1\varphi_Q}\varphi_Q}{
        L^q(\prob\otimes\mu)}^q,
\end{split}
\end{equation*}
where the last line was essentially just writing out the definition of each $\Phi_R$. Now Lemma~\ref{lem:NTV51} and the maximality of the cubes $P$, all of which are contained in $S$, make the estimate continue with
\begin{equation*}
  \lesssim\sum_P\mu(P)\Norm{T^*b_2}{\BMO_{\lambda}^q(\mu)}^q\leq\mu(S)\Norm{T^*b_2}{\BMO_{\lambda}^q(\mu)}^q.
\end{equation*}

The proof is completed by recalling from Nazarov et al.~\cite{NTV:Tb}, Section~2.3, that the assumption \(\Norm{T^* b_2}{\BMO_{\lambda}^1(\mu)}\leq 1\), combined with the other hypotheses of \(Tb\) theorem~\ref{thm:main}, already implies that \(\Norm{T^* b_2}{\BMO_{\lambda}^q(\mu)}\lesssim 1\) for all \(q\in[1,\infty)\) (which would not be true for an arbitrary \(h\in\BMO_{\lambda}^1(\mu)\) in place of \(T^* b_2\)), and substituting back to \eqref{eq:applyPf}.
\end{proof}


\section{Close-by cubes of comparable size}\label{sec:close}

The part of the series \eqref{eq:matrixT} which has not been addressed so far consists of the pairs of good cubes \(Q,R\) which are close to each other both in terms of their position and size; more precisely, \(2^{-r}\ell(R)\leq\ell(Q)\leq 2^r\ell(R)\) and \(\dist(Q,R)<\ell(Q)\wedge\ell(R)\). In this section, a certain portion, determined by a new auxiliary parameter $\eta$, of this remaining part will be estimated, and accordingly, the implicit constants here are allowed to depend on both $r$ and $\eta$, in addition to the parameters listed in Notation~\ref{not:C}. Only the size, and not the cancellation, properties of the ``Haar'' functions will be exploited here, so the estimates are equally valid for both types of functions, $\varphi_{Q,0}^{b}$ and $\varphi_{Q,u}^b$, $u\geq 1$, appearing in \eqref{eq:matrixT}.

Given \(R\in\mathscr{D}_{\good}'\), there are only boundedly many cubes \(Q\in\mathscr{D}_{\good}\) like this, and thus it remains to consider a finite number of subseries
\begin{equation}\label{eq:closeSeries}
  \sum_{R\in\mathscr{D}_{\good}'}
    \pair{g}{\psi_R}T_{RQ}\pair{\varphi_Q}{f}
  =\sum_{R\in\mathscr{D}_{\good}'}
    \pair{g}{\psi_R}\pair{\psi_R b_2}{T(b_1\varphi_Q)}\pair{\varphi_Q}{f},
\end{equation}
where \(Q=Q(R)\). Fix one such series; the convention that \(Q\) is implicitly a function of \(R\) will be maintained without further notice throughout the rest of this section. Without essential loss of generality, it is permissible to act as if the map \(R\mapsto Q(R)\) was invertible, so that the same series \eqref{eq:closeSeries} could also be written with the summation variable \(Q\in\mathscr{D}_{\good}\), with \(R=R(Q)\). In reality, it may happen that some \(Q\) has no preimage \(R\), or that there are several preimages. But in the first case one may simply interpret the corresponding terms as zero, and in the second case the number of preimages is nevertheless bounded, so that one can always split the summations under consideration into boundedly many subseries and proceed with the triangle inequality; such technical details will not be indicated explitly.

Observing that
\begin{equation*}
  b_1\varphi_Q\pair{\varphi_Q}{f}
  =\sum_{\ontop{Q'\in\mathscr{D},Q'\subset Q}{\ell(Q')=\ell(Q)/2}}
     b_1 1_{Q'}\ave{\varphi_Q}_{Q'}\pair{\varphi_Q}{f}
  =:\sum_{\ontop{Q'\in\mathscr{D},Q'\subset Q}{\ell(Q')=\ell(Q)/2}}
     b_1 1_{Q'}c_{Q'}(f)
\end{equation*}
and similarly
\begin{equation*}
  b_2\psi_R\pair{\psi_R}{g}
  =\sum_{\ontop{R'\in\mathscr{D}',R'\subset R}{\ell(R')=\ell(R)/2}}
     b_2 1_{R'}d_{R'}(g),
\end{equation*}
the series \eqref{eq:closeSeries} splits into \((2^N)^2\) subseries of the form
\begin{equation}\label{eq:closeReduct}
  \sum_{R\in\mathscr{D}'}d_R(g)\pair{1_R b_2}{T(b_1 1_Q)}c_Q(f),
\end{equation}
where \(Q=Q(R)\) is a possibly different function of \(R\) from the one before, but still with the property that \(2^{-r}\ell(R)\leq\ell(Q)\leq 2^r\ell(R)\).

As in \cite{NTV:Tb}, for each cube \(Q\), define the boundary region
\begin{equation}\label{eq:bdryReg}
  \delta_Q:=(1+2\eta)Q\setminus(1-2\eta)Q,
\end{equation}
where the new auxiliary parameter \(\eta>0\) is to be chosen. Then, for each \(Q\in\mathscr{D}\), its bad part is defined by
\begin{equation*}
  Q_{\bad}:=Q\cap\Big(\bigcup_{\ontop{R\in\mathscr{D}'}{2^{-r}\leq\ell(R)/\ell(Q)\leq 2^r}}\delta_R\Big),
\end{equation*}
while for \(R\in\mathscr{D}'\) a similar definition with the obvious modification is made. (Note that, just like in~\cite{NTV:Tb}, this is different badness from the one considered in the previous sections: some cubes, as entities, are good while some are bad, but all cubes, whether good or bad, have their bad part in the sense of the above definition.)

Given \(R\in\mathscr{D}'\) and \(Q=Q(R)\in\mathscr{D}\) appearing in the sum \eqref{eq:closeReduct}, let
\begin{equation*}
  \Delta:=Q\cap R,\quad\begin{matrix}
  Q_{\sep}:=Q\setminus\Delta\setminus\delta_R, &
  Q_{\partial}:=(Q\setminus\Delta)\cap\delta_R\subseteq Q_{\bad}, \\
  R_{\sep}:=R\setminus\Delta\setminus\delta_Q, &
  R_{\partial}:=(R\setminus\Delta)\cap\delta_Q\subseteq R_{\bad}
  \end{matrix}  
\end{equation*}
so that there are disjoint unions
\begin{equation*}
  Q=\Delta\cup Q_{\sep}\cup Q_{\partial},\quad
  R=\Delta\cup R_{\sep}\cup R_{\partial}.
\end{equation*}
Then the matrix coefficient in \eqref{eq:closeReduct} can be written as
\begin{equation}\label{eq:fiveTerms}\begin{split}
  \pair{1_R b_2}{T(b_1 1_Q)}
  = &\pair{1_{R_{\sep}}b_2}{T(b_1 1_Q)}
   +\pair{1_{R_{\partial}}b_2}{T(b_1 1_Q)} \\
   &\qquad+\pair{1_{\Delta}b_2}{T(b_1 1_{\Delta})} \\
   &+\pair{1_{\Delta}b_2}{T(b_1 1_{Q_{\partial}})}
   +\pair{1_{\Delta}b_2}{T(b_1 1_{Q_{\sep}})}.
\end{split}\end{equation}

The \emph{second} and the \emph{fourth terms} on the right of \eqref{eq:fiveTerms} correspond to the bad parts, and will be left alone for a while. The \emph{middle term} satisfies
\begin{equation*}
  \pair{1_{\Delta}b_2}{T(b_1 1_{\Delta})}
  =:T_{\Delta}\mu(\Delta),\qquad
  \abs{T_{\Delta}}\leq 1,
\end{equation*}
as a direct application of the assumed rectangular weak boundedness property of $M_{b_2}TM_{b_1}$, since \(\Delta=Q\cap R\) is clearly a rectangle. Hence
\begin{equation*}\begin{split}
  &\Babs{\sum_R d_R(g)\pair{1_{\Delta}b_2}{T(b_1 1_{\Delta})} c_Q(f)}
  =\Babs{\int\sum_R 1_R d_R(g)T_{\Delta}c_Q(f) 1_Q\ud\mu} \\
  &=\Babs{\iint\sum_R\radem_R 1_R d_R(g)\sum_{R'}\radem_{R'} T_{\Delta}c_{Q(R')}(f)1_{Q(R')}\ud\prob(\radem)\ud\mu} \\
  &\leq\BNorm{\sum_R\radem_R 1_R d_R(g)}{L^{p'}(\prob\otimes\mu;X^*)}
   \BNorm{\sum_Q\radem_Q T_{\Delta}c_Q(f)1_Q}{L^p(\prob\otimes\mu;X)} \\
  &\lesssim\BNorm{\sum_R\radem_R \psi_{R^{(1)}}\pair{\psi_{R^{(1)}}}{g}}{L^{p'}(\prob\otimes\mu;X^*)}
    \BNorm{\sum_Q\radem_Q\varphi_{Q^{(1)}}\pair{\varphi_{Q^{(1)}}}{f}}{L^p(\prob\otimes\mu;X)} \\
  &\lesssim\Norm{g}{L^{p'}(\mu;X^*)}\Norm{f}{L^p(\mu;X)},
\end{split}\end{equation*}
where, in the second to last step, the contraction principle was used both to remove the bounded factors \(T_{\Delta}\) and to dominate the functions \(1_Q c_Q(f)=1_Q\varphi_{Q^{(1)}}\pair{\varphi_{Q^{(1)}}}{f}\) by the right-hand side without the \(1_Q\), and similarly on the \(g\) side.

Now consider the \emph{first term} on the right of \eqref{eq:fiveTerms}; the fifth terms is essentially similar, the main point being that the two indicators in both terms correspond to sets separated from each other. By \eqref{eq:CZK1},
\begin{equation*}\begin{split}
  \abs{\pair{1_{R_{\sep}}b_2}{T(b_1 1_Q)}}
  &=\Babs{\int_{R_{\sep}}\int_Q b_2(x)K(x,y)b_1(y)\ud\mu(y)\ud\mu(x)} \\
  &\lesssim\frac{\mu(R_{\sep})\mu(Q)}{\dist(R_{\sep},Q)^d}
  \lesssim\frac{\mu(R)\mu(Q)}{\ell(Q)^d}.
\end{split}\end{equation*}

Write
\begin{equation*}
  \pair{1_{R_{\sep}}b_2}{T(b_1 1_Q)}=:T_Q\frac{\mu(R)\mu(Q)}{\ell(Q)^d},\qquad
  \abs{T_Q}\lesssim 1.
\end{equation*}
Then
\begin{equation*}\begin{split}
  &\sum_R d_R(g)\pair{1_{R_{\sep}}b_2}{T(b_1 1_Q)}c_Q(f)  \\
  &=\sum_R\pair{g}{\psi_{R^{(1)}}}\ave{\psi_{R^{(1)}}}_R\,\mu(R)\frac{T_Q}{\ell(Q)^d}
     \mu(Q)\ave{\varphi_{Q^{(1)}}}_Q\pair{\varphi_{Q^{(1)}}}{f} \\
  &=:\sum_R\pair{g}{\psi_{R^{(1)}}}\Norm{\psi_{R^{(1)}}}{L^1(\mu)}\frac{\tilde{T}_Q}{\ell(Q)^d}
     \Norm{\varphi_{Q^{(1)}}}{L^1(\mu)}\pair{\varphi_{Q^{(1)}}}{f},
\end{split}\end{equation*}
where also \(\abs{\tilde{T}_Q}\lesssim 1\). Reindexing the sum, so as to write simply \(Q\) and \(R\) instead of \(Q^{(1)}\) and \(R^{(1)}\), reduces the considerations to the series
\begin{equation}\label{eq:closeManip}
  \sum_R\pair{g}{\psi_R}\Norm{\psi_R}{L^1(\mu)}\frac{t_Q}{\ell(Q)^d}\Norm{\varphi_Q}{L^1(\mu)}\pair{\varphi_Q}{f},
\end{equation}
where \(\abs{t_Q}\lesssim 1\). By \eqref{eq:firstBy},
\begin{equation}\label{eq:closeMore}\begin{split}
  \frac{\abs{\eqref{eq:closeManip}}}{\Norm{g}{L^{p'}(\mu;X^*)}}
  &\leq
    \BNorm{\sum_Q\radem_Q \psi_{R(Q)}\Norm{\psi_{R(Q)}}{1}\frac{t_Q}{\ell(Q)^d}
       \Norm{\varphi_Q}{1}\pair{\varphi_Q}{f}}{L^p(\prob\otimes\mu;X)} \\
  &\lesssim\BNorm{\sum_Q\radem_Q 1_{R(Q)}\frac{1}{\ell(Q)^d}\int_Q
       \Norm{\varphi_Q}{1}\varphi_Q(y)f(y)\ud\mu(y)}{L^p(\prob\otimes\mu;X)}.
\end{split}\end{equation}

By symmetry, one may assume that \(\ell(Q)\geq\ell(R)\), hence \(\ell(Q^{(\theta(0))})>2^r\ell(R)\).
In order to apply the tangent martingale trick, one checks that \(R\subset Q^{(r)}\). Indeed, if not, then
\begin{equation*}\begin{split}
  \ell(R)\geq\dist(R,Q)
  &\geq\dist(R,Q^{(r)})
  =\dist(R,\partial Q^{(r)}) \\
  &\geq\ell(R)^{\gamma}(\ell(Q^{(r)}))^{1-\gamma}
  \geq 2^{r(1-\gamma)}\ell(R)>\ell(R),
\end{split}\end{equation*}
a contradiction.

Hence, reindexing the summation in terms of \(S=Q^{(r)}\),
\begin{equation*}
  RHS\eqref{eq:closeMore}
  \lesssim\BNorm{\sum_{k\in\Z}\radem_k\sum_{S\in\mathscr{D}_k}\frac{1_S(x)}{\mu(S)}
     \int_S K_S(x,y)\frac{1_S\D_{k-r}^{b_1}f}{b_1}(y)\ud\mu(y)}{L^p(\ud\prob(\radem)\ud\mu(x);X)},
\end{equation*}
where
\begin{equation*}
  K_S(x,y)=\sum_{\ontop{Q\in\mathscr{D}^{\good}_{k-r}}{Q\subset S}}
   1_{R(Q)}(x)\Norm{\varphi_Q}{L^1(\mu)}\varphi_Q(y)b_1(y)
\end{equation*}
is supported in \(S\times S\) and \(\Norm{K_S}{\infty}\lesssim 1\). As before, \(b_1^{-1}\D_{k-r}^{b_1}f\) is constant on all \(Q'\in\mathscr{D}_{k-r-1}\), so that splitting the \(k\) summation into \(r+1\) subseries according to \(k\equiv k_0\mod r+1\), and Corollary~\ref{cor:trick} applies to each of these. The conclusion is
\begin{equation*}
  \Babs{\sum_R d_R(g)\pair{1_{R_{\sep}}b_2}{T(b_1 1_Q)}c_Q(f)}
  \lesssim\Norm{g}{L^{p'}(\mu;X^*)}\Norm{f}{L^p(\mu;X)},
\end{equation*}
and the same is true with \(1_{R_{\sep}}\) and \(1_Q\) replaced by \(1_{\Delta}\) and \(1_{Q_{\sep}}\), as argued above.


\section{Bad boundary regions}\label{sec:bad}

It is time to encounter the bad parts, which were avoided until now, in order to complete the proof of \(Tb\) theorem~\ref{thm:main}. In this section, the implicit constant are still allowed to depend on $r$, as before, but any dependence on the auxiliary parameter $\eta$ from the previous section, which was used to define the depth of the boundary regions \(\delta_Q\) in \eqref{eq:bdryReg}, will be stated explicitly. In estimating the expansion
\begin{equation}\label{eq:seriesRecalled}
  \sum_{Q\in\mathscr{D},R\in\mathscr{D}'}\pair{g}{\psi_R}T_{RQ}\pair{\varphi_Q}{f},
\end{equation}
the following inequalities, following the convention about the implicit constants just stated, have been obtained so far:
\begin{equation}\label{eq:handled}\begin{split}
 \Babs{\sum_{R\in\mathscr{D}_{\good}'}
  \sum_{\ontop{Q\in\mathscr{D}_{\good}}{\ell(Q)\leq\dist(Q,R)\wedge\ell(R)}}
   \pair{g}{\psi_R}T_{RQ}\pair{\varphi_Q}{f}}
 &\leq C\Norm{g}{p'}\Norm{f}{p}, \\
 \Babs{\sum_{R\in\mathscr{D}_{\good}'}
  \sum_{\ontop{Q\in\mathscr{D}_{\good},\,Q\subset R}{\ell(Q)<2^{-r}\ell(R)}}
   \pair{g}{\psi_R}\tilde{T}_{RQ}\pair{\varphi_Q}{f}}
 &\leq C\Norm{g}{p'}\Norm{f}{p}, \\
 \Babs{\sum_{R\in\mathscr{D}_{\good}'}
  \sum_{\substack{ Q\in\mathscr{D}_{\good}\\ 
         \dist(Q,R)<\ell(Q)\wedge\ell(R) \\ 2^{-r}\leq\ell(Q)/\ell(R)\leq 2^r}}
   \pair{g}{\psi_R}T_{RQ}^{\good}\pair{\varphi_Q}{f}}
 &\leq C_{\eta}\Norm{g}{p'}\Norm{f}{p}, \\
\end{split}\end{equation}
where \(T_{RQ}^{\good}\) is the part of the coefficient \(T_{RQ}\) corresponding to the first, third and fifth terms in \eqref{eq:fiveTerms} in the decomposition performed in the previous section, and $\tilde{T}_{QR}$ is the modified matrix entry with the paraproduct removed, as treated in Section~\ref{sec:contained}. In addition, it was shown in Section~\ref{sec:para} that the correction terms $T_{QR}-\tilde{T}_{QR}$ satisfy the similar bound on average, in the sense that
\begin{equation}\label{eq:handledOnAve}
  \Babs{\Exp_{\beta\tilde\beta}\sum_{R\in\mathscr{D}_{\good}'}
   \sum_{\ontop{Q\in\mathscr{D}_{\good},\,Q\subset R}{\ell(Q)<2^{-r}\ell(R)}}
   \pair{g}{\psi_R}(T_{QR}-\tilde{T}_{RQ})\pair{\varphi_Q}{f}}
 \leq C\Norm{g}{p'}\Norm{f}{p}.
\end{equation}

Note that, were it not for the labels ``good'' in various places in \eqref{eq:handled} and \eqref{eq:handledOnAve}, these subseries would cover the half of \eqref{eq:seriesRecalled} with \(\ell(Q)\leq\ell(R)\), and in fact a bit more in the case of close-by cubes. By symmetry, it hence remains to treat the bad cubes, and also the bad parts of the matrix coefficients, \(T_{RQ}^{\bad}=T_{RQ}-T_{RQ}^{\good}\) corresponding to the second and fourth terms in \eqref{eq:fiveTerms}, which were left out in the last line of \eqref{eq:handled}.

The treatment of these remaining terms will be similar to the estimate \eqref{eq:handledOnAve}, in that control is gained only after averaging over the dyadic systems. But there is also the important difference, that the bounds will now depend on the operator norm $\Norm{T}{\bddlin(L^p(\mu;X))}$, and one needs to get a small factor in front of it in order to eventually absorb it into the left side of the final inequality.

The estimation of the bad parts will be based on the fact that every UMD space has cotype \(s\) for some \(s\in[2,\infty)\), i.e., satisfies the inequality:
\begin{equation*}
  \Big(\sum_{j=1}^n\norm{\xi_j}{X}^s\Big)^{1/s}
  \lesssim\BNorm{\sum_{j=1}^n\radem_j\xi_j}{L^2(\Omega;X)},
\end{equation*}
and then on the following improvement of the contraction principle (corresponding to \(t=\infty\) below) under this extra condition. Note that the previous estimate (with the usual modification) is always true for $s=\infty$ and never for $s<2$.

\begin{proposition}\label{prop:DJT}
Let \(\xi_j\in X\), where $X$ is a Banach space of cotype \(s\in[2,\infty)\) and let \(\theta_j\in L^t(\tilde\Omega)\) for some \(\sigma\)-finite measure space \(\tilde\Omega\) and \(t\in(s,\infty)\). Then
\begin{equation*}
  \BNorm{\sum_{j=1}^{\infty}\radem_j\theta_j\xi_j}{L^t(\tilde\Omega;L^2(\Omega;X))}
  \lesssim\sup_j\Norm{\theta_j}{t}\BNorm{\sum_{j=1}^{\infty}\radem_j\xi_j}{L^2(\Omega;X)}.
\end{equation*}
\end{proposition}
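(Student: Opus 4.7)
The plan is to combine Kahane's equivalence of Rademacher averages with the cotype $s$ inequality of $X$ in order to strengthen the scalar contraction principle, which alone would only yield the much larger upper bound $\|\sup_j|\theta_j|\|_{L^t(\tilde\Omega)}$. After rescaling I may assume $\sup_j\Norm{\theta_j}{L^t(\tilde\Omega)}=1$, and I shall write $N:=\BNorm{\sum_j\radem_j\xi_j}{L^2(\Omega;X)}$ for brevity.

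First, by Kahane's inequality applied to the inner $L^2(\Omega;X)$ Rademacher norm (yielding equivalence with the $L^t(\Omega;X)$ norm up to a constant depending only on $t$) and Fubini's theorem,
\[
\BNorm{\sum_j\radem_j\theta_j\xi_j}{L^t(\tilde\Omega;L^2(\Omega;X))}\eqsim\BNorm{\sum_j\radem_j\theta_j\xi_j}{L^t(\tilde\Omega\times\Omega;X)},
\]
and one is reduced to bounding this single $L^t$ norm on the product space.

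Next, I would express this $L^t$ norm by duality as a supremum of $\pair{\Psi}{G}$ with $\Psi:=\sum_j\radem_j\theta_j\xi_j$ over $G$ in the unit ball of $L^{t'}(\tilde\Omega\times\Omega;X^*)$. Writing $g_j(\tilde\omega):=\int_{\Omega}\radem_j(\omega)G(\tilde\omega,\omega)\ud\prob(\omega)\in X^{*}$ for the $j$-th Rademacher Fourier coefficient of $G(\tilde\omega,\cdot)$, the pairing collapses to
\[
\sum_j\int_{\tilde\Omega}\theta_j(\tilde\omega)\pair{g_j(\tilde\omega)}{\xi_j}\ud\tilde\omega.
\]
Two successive applications of H\"older's inequality---first with the dual exponents $(t,t')$ inside the integral, then with $(s,s')$ across the $j$-summation---majorise this by
\[
\sup_j\Norm{\theta_j}{L^t}\cdot\Bigl(\sum_j\norm{\xi_j}{X}^{s}\Bigr)^{1/s}\cdot\Bigl(\sum_j\Norm{g_j}{L^{t'}(\tilde\Omega;X^*)}^{s'}\Bigr)^{1/s'}.
\]
The cotype $s$ hypothesis on $X$ controls the middle factor by $C_s(X)\cdot N$ directly.

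The main obstacle is the final factor, which requires controlling an $\ell^{s'}$-sum of dual Rademacher Fourier coefficients by $\Norm{G}{L^{t'}(L^2(X^*))}\leq 1$. This is effectively a ``dual cotype'', i.e.\ type-$s'$, statement about $X^*$; it follows from K-convexity of $X$ (automatic under the UMD hypothesis of the paper), and more robustly via a Maurey-type interpolation argument that exploits the strict gap $t>s$ to absorb the losses incurred in passing from cotype of $X$ to type of $X^*$. Once this dual-side bound is in place, the three factors assemble into precisely $\sup_j\Norm{\theta_j}{L^t}\cdot N$, completing the proof.
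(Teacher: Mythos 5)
Your strategy of reducing, via Kahane's inequality and Fubini, to the single norm $\Norm{\sum_j\radem_j\theta_j\xi_j}{L^t(\tilde\Omega\times\Omega;X)}$ is sound, and the use of cotype $s$ to control the factor $(\sum_j\norm{\xi_j}{X}^s)^{1/s}$ by $N$ is correct. The gap is in the last factor. After the two applications of H\"older you need, for \emph{every} $G$ in the unit ball of $L^{t'}(\tilde\Omega\times\Omega;X^*)$, the estimate
\begin{equation*}
  \Big(\sum_j\Norm{g_j}{L^{t'}(\tilde\Omega;X^*)}^{s'}\Big)^{1/s'}\lesssim 1,\qquad
  g_j(\tilde\omega):=\int_\Omega\radem_j(\omega)G(\tilde\omega,\omega)\ud\prob(\omega).
\end{equation*}
You describe this as a ``dual cotype, i.e.\ type-$s'$'' estimate for $X^*$, but the direction is wrong: type $s'$ of $X^*$ would bound a Rademacher sum $\Norm{\sum_j\radem_jg_j}{}$ \emph{from above} by the $\ell^{s'}$-norm of its coefficients, whereas what is needed here is the reverse, namely a cotype-$s'$ bound on $X^*$. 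Since $s'\le 2$, that forces $s'=2$; combined with the type $2$ that K-convexity would give $X^*$, Kwapie\'n's theorem would then make $X^*$ Hilbertian---a far stronger hypothesis than cotype $s$ of $X$.

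A concrete counterexample to the needed bound: let $X=L^p$ with $1<p<2$ (a UMD space with cotype $s=2$, hence $s'=2$), take $\tilde\Omega$ a single point of unit mass, and $G(\omega):=\sum_{j=1}^n\radem_j(\omega)\,1_{[(j-1)/n,\,j/n]}\in L^{p'}(0,1)$. Since the supports are disjoint, $\Norm{G(\omega)}{L^{p'}}=1$ a.s., so $\Norm{G}{L^{t'}(\Omega;L^{p'})}=1$; but $g_j=1_{[(j-1)/n,\,j/n]}$, so $(\sum_j\Norm{g_j}{L^{p'}}^2)^{1/2}=n^{1/2-1/p'}\to\infty$. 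Thus your third factor is genuinely unbounded, and neither K-convexity nor ``absorbing losses into the gap $t>s$'' can rescue the $(s,s')$ H\"older split across the $j$-summation. Note also that the paper does not prove this proposition at all---it merely cites Hyt\"onen--Veraar \cite{HV}, Lemma~3.1---so a self-contained proof would have to follow that (non-duality) route or another correct one.
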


\begin{proof}
By approximation, it suffices to consider finite sums \(1\leq j\leq n\). This result can be found in \cite{HV}, Lemma~3.1.
\end{proof}

Now turn to the bad analogue of the last series in \eqref{eq:handled}, and more precisely to the part of the series \eqref{eq:closeReduct} with the second term from \eqref{eq:fiveTerms}, \(\pair{1_{R_{\partial}}b_2}{T(b_1 1_{Q})}\), in place of \(\pair{1_R b_2}{T(b_1 1_Q)}\).

\begin{lemma}\label{lem:badBdry1}
Let \(X^*\) have cotype \(s\) and take \(t>s\vee p'\). Then
\begin{equation*}\begin{split}
  \Exp_{\beta} &\Babs{\sum_{R\in\mathscr{D}_{\good}'}d_R(g)\pair{1_{R_{\partial}}b_2}{T(b_1 1_{Q})}c_Q(f)} \\
  &\lesssim \eta^{1/t}\Norm{T}{\bddlin(L^p(\mu;X))}\Norm{g}{L^{p'}(\mu;X^*)}\Norm{f}{L^p(\mu;X)}.
\end{split}\end{equation*}
\end{lemma}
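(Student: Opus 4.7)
The plan is to randomize with independent Rademacher signs $\radem_R$ on both sides and apply H\"older's inequality, splitting
\begin{equation*}
\Big|\sum_R d_R(g)\pair{1_{R_\partial}b_2}{T(b_1 1_Q)}c_Q(f)\Big|
\leq \Norm{H}{L^{p'}(\prob\otimes\mu;X^*)}\Norm{\tilde f}{L^p(\prob\otimes\mu;X)},
\end{equation*}
with $H:=\sum_R\radem_R 1_{R_\partial}b_2 d_R(g)$ and $\tilde f:=\sum_R\radem_R T(b_1 1_Q)c_Q(f)$. For the second factor, pulling $T$ through the $\radem_R$-sum by linearity and applying Proposition~\ref{prop:UC} to the resulting random sum of child-pieces of twisted martingale differences of $f$ yields $\Norm{\tilde f}{L^p(\prob\otimes\mu;X)}\lesssim\Norm{T}{\bddlin(L^p(\mu;X))}\Norm{f}{L^p(\mu;X)}$, uniformly in $\beta$. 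Taking $\Exp_\beta$ and splitting by H\"older on the $\beta$-probability reduces the lemma to proving $\Exp_\beta\Norm{H}{L^{p'}(\prob\otimes\mu;X^*)}\lesssim\eta^{1/t}\Norm{g}{L^{p'}(\mu;X^*)}$.

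For this, I would first invoke Kahane's inequality pointwise in $x$ to replace $L^{p'}(\prob;X^*)$ by $L^2(\prob;X^*)$. Then, for each fixed $x\in\R^N$ (and $\beta'$ parameterizing $\mathscr{D}'$), I would apply Proposition~\ref{prop:DJT} on the probability space $\Omega_\beta$ of random dyadic shifts with
\begin{equation*}
\theta_R(\beta):=1_{\delta_{Q(R;\beta)}}(x)b_2(x),\qquad \xi_R:=1_R(x)d_R(g)\in X^*,
\end{equation*}
so that $\theta_R\xi_R=1_{R_\partial(\beta)}(x)b_2(x)d_R(g)$ reconstitutes the summand of $H(x,\cdot)$. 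Placing $1_R(x)$ inside $\xi_R$ rather than $\theta_R$ is crucial: it localizes the right-hand side of the DJT inequality at $x$. Since $X^*$ has finite cotype $s$ as any UMD space, Proposition~\ref{prop:DJT} delivers
\begin{equation*}
\Big(\Exp_\beta\Norm{H(x,\cdot)}{L^2(\prob;X^*)}^t\Big)^{1/t}
\leq C\sup_R\Norm{\theta_R}{L^t(\prob_\beta)}\cdot\BNorm{\sum_R\radem_R 1_R(x)d_R(g)}{L^2(\prob;X^*)}.
\end{equation*}

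Granted the uniform estimate $\sup_R\Norm{\theta_R}{L^t(\prob_\beta)}\lesssim\eta^{1/t}|b_2(x)|$, Jensen's inequality (since $t>p'$) converts this $L^t$-moment bound in $\beta$ into an $L^{p'}$-moment bound; integrating over $x\in\R^N$ and applying Kahane in reverse yields $\Exp_\beta\Norm{H}{L^{p'}}^{p'}\lesssim\eta^{p'/t}\Norm{G}{L^{p'}(\prob\otimes\mu;X^*)}^{p'}$, where $G:=\sum_R\radem_R 1_R b_2 d_R(g)$. The bound $\Norm{G}{L^{p'}}\lesssim\Norm{g}{L^{p'}}$ then follows by reindexing Rademacher signs by level (exploiting that, at each point $x$ and each dyadic level, only one $R$ contributes) and grouping children under their parent in $\mathscr{D}'_{\good}$, thereby recognizing $G$ as a randomized twisted martingale decomposition of $g$ to which Proposition~\ref{prop:UC} applies.

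The main obstacle is the uniform probabilistic estimate $\prob_\beta(x\in\delta_{Q(R;\beta)})\lesssim\eta$ underlying the preceding paragraph. This should follow from the random dyadic construction of Section~\ref{sec:random}: for fixed $R\in\mathscr{D}'$, the close-by partner $Q(R;\beta)\in\mathscr{D}$ is determined by the shift parameters $\beta_j$ with $2^j\lesssim\ell(R)$, which place the centre of $Q(R;\beta)$ essentially uniformly modulo $\ell(Q)$; since the shell $\delta_Q$ has relative thickness $\eta$, any fixed point $x$ lands in it with probability at most $C\eta$. With this geometric input in hand, the remaining steps are routine bookkeeping with Jensen, Kahane, and the unconditionality of twisted martingale differences.
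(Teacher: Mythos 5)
Your proof is correct and follows essentially the same route as the paper's: randomize and split by H\"older, control the $f$-factor by the operator norm of $T$ together with the contraction principle and twisted-martingale unconditionality, and control the $g$-factor pointwise in $x$ via Proposition~\ref{prop:DJT} with the random small factor $1_{\delta_{Q(R;\beta)}}(x)$ (whose $L^t(\prob_\beta)$ norm is $\lesssim\eta^{1/t}$) separated from the deterministic $\xi_R=1_R(x)d_R(g)$. The paper instead works with the coarser sets $\delta(k)=\bigcup_{j=k-r}^{k+r}\bigcup_{Q\in\mathscr{D}_j}\delta_Q$ and indexes the Rademacher sum by level $k$ rather than by cube $R$, but these are cosmetic variants of the same argument (the $r$-dependent union bound is harmless since constants are allowed to depend on $r$ here).
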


\begin{proof}
First randomize and use H\"older to the result that
\begin{equation*}\begin{split}
  &\Babs{\sum_{R\in\mathscr{D}_{\good}'}d_R(g)\pair{1_{R_{\partial}}b_2}{T(b_1 1_{Q})}c_Q(f)} \\
  &=\Babs{\int_{\Omega}\Bpair{\sum_{S\in\mathscr{D}_{\good}'}\radem_S d_S(g)1_{S_{\partial}}b_2}{T\Big(
     \sum_{R\in\mathscr{D}_{\good}'}\radem_R c_Q(f) b_1 1_{Q}\Big)}\ud\prob(\radem)} \\
  &\leq\BNorm{\sum_{S\in\mathscr{D}'}\radem_S d_S(g)1_{S_{\partial}}b_2}{L^{p'}(\prob\otimes\mu;X^*)}
    \BNorm{T\Big(\sum_{R\in\mathscr{D}_{\good}'}\radem_R c_Q(f) b_1 1_{Q}\Big)}{L^p(\prob\otimes\mu;X)}.
\end{split}\end{equation*}
From the second factor, one may extract \(\Norm{T}{\bddlin(L^p(\mu;X))}\), and then by the contraction principle and unconditionality
\begin{equation*}
\begin{split}
  \BNorm{\sum_{Q\in\mathscr{D}}\radem_Q c_Q(f)b_1 1_Q}{L^p(\prob\otimes\mu;X)} 
  &\leq\BNorm{\sum_{Q\in\mathscr{D}}\radem_Q b_1\varphi_{Q^{(1)}}\pair{\varphi_{Q^{(1)}}}{f}}{L^p(\prob\otimes\mu;X)} \\
  &\lesssim\Norm{f}{L^p(\mu;X)}.
\end{split}
\end{equation*}

As for the first factor, write
\begin{equation*}
  \delta(k):=\bigcup_{j=k-r}^{k+r}\bigcup_{Q\in\mathscr{D}_j}\delta_Q,
\end{equation*}
and then (dropping $b_2$ by the contraction principle)
\begin{equation*}\begin{split}
  &\Exp_{\beta}
    \BNorm{\sum_{S\in\mathscr{D}'}\radem_S d_S(g)1_{S_{\partial}}}{L^{p'}(\prob\otimes\mu;X^*)}
  =\Exp_{\beta}
    \BNorm{\sum_{k\in\Z}\radem_k 1_{\delta(k)}\sum_{R\in\mathscr{D}_k'}d_R(g)1_R}{L^{p'}(\prob\otimes\mu;X^*)} \\
  &\leq\Big(\int_{\R^N}\Big[\Exp_{\beta}\BNorm{
      \sum_{k\in\Z}\radem_k 1_{\delta(k)}(x)\sum_{R\in\mathscr{D}_k'}d_R(g)1_R(x)}{L^{p'}(\prob;X^*)}^t
    \Big]^{p'/t}\ud\mu(x)\Big)^{1/p'}
\end{split}\end{equation*}
for \(t\geq p'\).

For each fixed \(x\in\R^N\), the integrand is of the form considered in Proposition~\ref{prop:DJT}, with 
\begin{equation*}
  \xi_k=\sum_{R\in\mathscr{D}_k'}d_R(g)1_R(x)=d_{R(x,k)}(g),
\end{equation*}
where \(R(x,k)\) is the unique \(R\in\mathscr{D}_k'\) containing \(x\). (There is now an \(L^{p'}\) norm instead of the \(L^2\) norm on the probability space \((\Omega,\prob)\), which is however irrelevant thanks to Kahane's inequality.) The random variables \(1_{\delta(k)}(x)\)---as functions of the implicit variable $\beta\in(\{0,1\}^N)^{\Z}$, which governs the distribution of the random dyadic system \(\mathscr{D}\), and hence of the boundary regions $\delta(k)$---obviously belong to all \(L^t((\{0,1\}^N)^{\Z})\) for all \(t\in[1,\infty]\), and satisfy
\begin{equation*}
  \Norm{1_{\delta(k)}(x)}{L^t((\{0,1\}^N)^{\Z})}
  =\prob_{\beta}(1_{\delta(k)}(x)=1)^{1/t}\lesssim\eta^{1/t}.
\end{equation*}
With a choice of \(t\) as in the assertion, Proposition~\ref{prop:DJT} then implies that
\begin{equation*}
  \Exp_{\beta}\BNorm{\sum_{S\in\mathscr{D}'}\radem_S d_S(g)1_{S_{\partial}}}{L^{p'}(\prob\otimes\mu;X^*)}
  \lesssim\eta^{1/t}\BNorm{\sum_{R\in\mathscr{D}'}\radem_R d_R(g)1_R}{L^{p'}(\prob\otimes\mu;X^*)},
\end{equation*}
and this is dominated by \(\eta^{1/t}\Norm{g}{L^{p'}(\mu;X^*)}\) by similar contraction principle and unconditionality arguments as before.
\end{proof}

The case of the fourth term from \eqref{eq:fiveTerms} is analogous (the only break in the symmetry being one more application of the contraction principle to estimate \(1_{\Delta}\) by \(1_R\) in the appropriate place), and I only state the result, leaving its verification as an easy excercise along the lines of the previous proof.

\begin{lemma}\label{lem:badBdry2}
Let \(X\) have cotype \(s\) and take \(t>s\vee p\). Then
\begin{equation*}\begin{split}
  \Exp_{\beta'} &\Babs{\sum_{Q\in\mathscr{D}_{\good}}d_R(g)\pair{1_{\Delta}b_2}{T(b_1 1_{Q_{\partial}})}c_Q(f)} \\
  &\lesssim \eta^{1/t}\Norm{T}{\bddlin(L^p(\mu;X))}\Norm{g}{L^{p'}(\mu;X^*)}\Norm{f}{L^p(\mu;X)}.
\end{split}\end{equation*}
\end{lemma}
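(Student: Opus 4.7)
The strategy is to mirror Lemma~\ref{lem:badBdry1} with the roles of $f/Q$ and $g/R$ interchanged. The matrix coefficient now carries the full cutoff $1_\Delta=1_{Q\cap R}$ on the $g$-side instead of just $1_R$, so the only adjustment to the template is one additional application of the contraction principle to replace $1_\Delta$ by $1_R$. The $\eta^{1/t}$ gain will arise on the $f$-side, where the bad boundary $1_{Q_\partial}$ lives; accordingly, cotype $s$ is required of $X$ rather than of $X^*$, and the relevant average is over $\beta'$, which governs the positions of the cubes $R$ and hence of $\delta_R\supseteq Q_\partial$.

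The plan is to begin by randomizing: insert signs $\radem_Q$ indexed by the good cubes, rewrite the sum as
\[
\int_{\Omega}\Bpair{\sum_Q\radem_Q d_{R(Q)}(g)1_\Delta b_2}{T\Bigl(\sum_{Q'}\radem_{Q'}c_{Q'}(f)b_1 1_{Q'_\partial}\Bigr)}\ud\prob(\radem),
\]
and split by H\"older into an $L^{p'}(\prob\otimes\mu;X^*)$-norm of the $g$-side and an $L^p(\prob\otimes\mu;X)$-norm of $T$ applied to the $f$-side. On the $g$-side, the contraction principle dominates $1_\Delta$ by $1_{R(Q)}$; after reindexing over $R$ (allowed up to a bounded multiplicity from the pairing $Q\mapsto R(Q)$) the contraction principle and boundedness of $b_2$ reduce the quantity to $\BNorm{\sum_R\radem_R\psi_{R^{(1)}}\pair{\psi_{R^{(1)}}}{g}}{L^{p'}(\prob\otimes\mu;X^*)}\lesssim\Norm{g}{L^{p'}(\mu;X^*)}$ by Proposition~\ref{prop:UC}. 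From the $f$-side one extracts $\Norm{T}{\bddlin(L^p(\mu;X))}$, leaving
\[
\BNorm{\sum_Q\radem_Q c_Q(f)\,b_1\,1_{Q_\partial}}{L^p(\prob\otimes\mu;X)}
\]
to be bounded by $\eta^{1/t}\Norm{f}{L^p(\mu;X)}$ after taking $\Exp_{\beta'}$.

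To extract the $\eta^{1/t}$, introduce the enlarged boundary
\[
\delta'(k):=\bigcup_{j=k-r}^{k+r}\bigcup_{R\in\mathscr{D}_j'}\delta_R.
\]
For $Q\in\mathscr{D}_k$ paired with $R=R(Q)$ of comparable size ($2^{-r}\le\ell(R)/\ell(Q)\le 2^r$), the inclusion $Q_\partial\subseteq\delta_R\subseteq\delta'(k)$ holds, so the contraction principle dominates the $L^p(\prob\otimes\mu;X)$-norm above by
\[
\BNorm{\sum_{k\in\Z}\radem_k 1_{\delta'(k)}(x)\sum_{Q\in\mathscr{D}_{\good,k}}c_Q(f)\,b_1(x)\,1_Q(x)}{L^p(\prob\otimes\mu;X)}.
\]
Pulling the $L^t_{\beta'}$-norm outside the spatial $L^p$ integration (allowed by $t\ge p$ and Minkowski), and then applying Proposition~\ref{prop:DJT} pointwise in $x$ with $\theta_k=1_{\delta'(k)}(x)$, $\xi_k=\sum_{Q\in\mathscr{D}_{\good,k}}c_Q(f)b_1(x)1_Q(x)$, the cotype $s$ of $X$ combined with the elementary estimate $\prob_{\beta'}(1_{\delta'(k)}(x)=1)\lesssim\eta$ produces the factor $\eta^{1/t}$. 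What remains is a Rademacher sum in $\xi_k$, which collapses to $\Norm{f}{L^p(\mu;X)}$ by unconditionality and the contraction principle, exactly as in the corresponding step of Section~\ref{sec:close}.

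The main point to watch — rather than a real obstacle — is the bookkeeping: one must check that (i) cotype is invoked for $X$, not $X^*$, because the bad indicator now sits on the $f$-side; (ii) the average is over $\beta'$, since $\delta_R$ depends on $\mathscr{D}'$ while $\beta$ remains fixed throughout; and (iii) the threshold for $t$ is $s\vee p$, as opposed to $s\vee p'$ in Lemma~\ref{lem:badBdry1}, because the $L^t$-excursion occurs inside the $L^p$ norm rather than the $L^{p'}$ norm.
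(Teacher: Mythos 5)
Your proposal is correct and reconstructs exactly the ``analogous'' argument that the paper leaves as an exercise in the one-sentence remark after Lemma~\ref{lem:badBdry1}: mirror that proof with the roles of $f/Q$ and $g/R$ swapped, with the single asymmetry being an additional use of the contraction principle to replace $1_\Delta$ by $1_{R(Q)}$ on the $g$-side. The bookkeeping points you flag --- cotype of $X$ rather than $X^*$, averaging over $\beta'$ rather than $\beta$, and the threshold $t>s\vee p$ rather than $t>s\vee p'$ --- are precisely the right ones.

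One small cleanup worth noting: in the application of Proposition~\ref{prop:DJT} you write $\xi_k=\sum_{Q\in\mathscr{D}_{\good,k}}c_Q(f)b_1(x)1_Q(x)$, but goodness of $Q\in\mathscr{D}$ is determined by $\beta'$ and $\tilde\beta'$, so as written $\xi_k$ is itself $\beta'$-dependent, which the Proposition does not allow. The fix is the same one the paper uses silently in Lemma~\ref{lem:badBdry1}: before invoking the Proposition, drop the goodness restriction on the summation (the indicators $1_{\good}(Q)\in\{0,1\}$ multiply disjointly supported terms at each level, so the contraction principle upgrades the sum to all of $\mathscr{D}_k$), after which $\xi_k$ depends only on $\mathscr{D}$, $f$, and $x$. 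With that adjustment your proof matches the intended argument.
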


The results of Lemmas~\ref{lem:badBdry1} and \ref{lem:badBdry2} may be summarized as
\begin{equation}\label{eq:handledBdry}
\begin{split}
 &\Babs{\Exp_{\beta\beta'}\sum_{R\in\mathscr{D}_{\good}'}
  \sum_{\substack{ Q\in\mathscr{D}_{\good}\\ 
         \dist(Q,R)<\ell(Q)\wedge\ell(R) \\ 2^{-r}\leq\ell(Q)/\ell(R)\leq 2^r}}
   \pair{g}{\psi_R}T_{RQ}^{\bad}\pair{\varphi_Q}{f}} \\
 &\qquad\qquad\qquad\qquad\qquad\qquad\leq C\eta^{1/t}\Norm{T}{\bddlin(L^p(\mu;X))}\Norm{g}{p'}\Norm{f}{p},
\end{split}
\end{equation}
for any $t$, which is bigger than both $\max(p,p')$ and the cotypes of $X$ and $X^*$.


\section{Synthesis}

The proof of $Tb$ theorem~\ref{thm:main} will now be completed. This also involves choosing appropriate values for the auxiliary parameters $r$ and $\eta$. Hence any dependence on these numbers will now be indicated explicitly, and any constant $C$ may only depend on the parameters as listed in Notation~\ref{not:C}.

Given a function \(f\in L^p(\mu;X)\), define its good and bad parts
\begin{equation*}
  f_{\lambda}:=\sum_{\substack{Q\in\mathscr{D}_{\lambda} \\ \ell(Q)\leq 2^m}}\D_Q^{b_1}f+
     \sum_{\substack{Q\in\mathscr{D}_{\lambda} \\ \ell(Q)=2^m}}\Exp_Q^{b_1}f,\qquad
  \lambda\in\{\good,\bad\};
\end{equation*}
an analogous definition is made for \(g\in L^{p'}(\mu;X^*)\). Observe that the decomposition $f=f_{\good}+f_{\bad}$ depends on the random parameters $\beta$ (which determines the dyadic system $\mathscr{D}$) as well as $\beta'$ and $\tilde\beta'$ (which determine the goodness or badness of a given $Q\in\mathscr{D}$); likewise, the splitting $g=g_{\good}+g_{\bad}$ depends on $\beta'$, $\beta$ and $\tilde\beta$.

Having assumed that $T$ is a bounded operator, fix some compactly supported $f\in L^p(\mu;X)$ and $g\in L^{p'}(\mu;X^*)$ so that
\begin{equation*}
  \Norm{T}{\bddlin(L^p(\mu;X))}\leq 2\pair{g}{Tf},\qquad\Norm{f}{p}=\Norm{g}{p'}=1.
\end{equation*}
One first expands
\begin{equation*}
  \pair{g}{Tf}=\pair{g_{\good}}{Tf_{\good}}+\pair{g_{\good}}{Tf_{\bad}}+\pair{g_{\bad}}{Tf}.
\end{equation*}
The left side is independent of the parameters $\beta,\beta',\tilde\beta,\tilde\beta'$, which govern the splitting on the right. Now take the expectation $\Exp_{\beta\beta'\tilde\beta\tilde\beta'}$ of both sides of the previous equality.

Collecting the estimates from the previous sections, summarized in \eqref{eq:handled}, \eqref{eq:handledOnAve} and \eqref{eq:handledBdry}, and writing out the dependence on $r$, which was suppressed in the implicit constants until now, it has been established that
\begin{equation*}
  \abs{\Exp_{\beta\beta'\tilde\beta\tilde\beta'}\pair{g_{\good}}{Tf_{\good}}}
  \leq C(\eta,r)+C(r)\eta^{1/t}\Norm{T}{\bddlin(L^p(\mu;X))}.
\end{equation*}
Indeed, the mentioned estimates involved averaging over some of the $\beta$ parameters only, being uniform with respect to the other ones, but such inequalities clearly imply the weaker version, where all these parameters are averaged out. Altogether, using also that $\Norm{g_{\good}}{p'}\lesssim\Norm{g}{p'}=1$, it follows that
\begin{equation}\label{eq:almostThere}
\begin{split}
  \Norm{T}{\bddlin(L^p(\mu;X))}&\leq C(\eta,r) +C(r)\eta^{1/t}\Norm{T}{\bddlin(L^p(\mu;X))} \\
    &\quad+C\Norm{T}{\bddlin(L^p(\mu;X))}\big(\Exp_{\beta'}\Exp_{\beta\tilde\beta}\Norm{g_{\bad}}{p'}
      +\Exp_{\beta}\Exp_{\beta'\tilde\beta'}\Norm{f_{\bad}}{p}\big).
\end{split}
\end{equation}

It remains to estimate the expectations of the bad parts, which is very similar to the previous section, for instance the second one:
\begin{equation*}\begin{split}
  &\Exp_{\beta'\tilde\beta'}\Norm{f_{\bad}}{L^p(\mu;X)} \\
  &\lesssim\Big(\int_{\R^N}\Big[\Exp_{\beta'\tilde\beta'}\BNorm{
      \sum_{Q\in\mathscr{D}}\radem_Q 1_{\bad}^{\beta'\tilde\beta'}(Q)\,\D_Q^{b_1}f(x)}{
         L^p(\prob;X)}^t\Big]^{p/t}\ud\mu(x)\Big)^{1/p}
\end{split}\end{equation*}
By Lemma~\ref{lem:NTV92}, the random variables $(\beta',\tilde\beta')\mapsto 1_{\bad}^{\beta'\tilde\beta'}(Q)$ satisfy
\begin{equation*}
  \Norm{1_{\bad}(Q)}{L^t((\{0,1\}^{2N})^{\Z})}
  =\prob_{\beta'\tilde\beta'}(Q\text{ is bad})^{1/t}=\epsilon(r),
\end{equation*}
where \(\epsilon(r)\to 0\) as \(r\to\infty\). Hence, by Proposition~\ref{prop:DJT}, with \(t>s\vee p\), where \(X\) has cotype \(s\), it follows that
\begin{equation*}
  \Exp_{\beta'\tilde\beta'}\Norm{f_{\bad}}{L^p(\mu;X)}
  \lesssim \epsilon(r)\BNorm{\sum_{Q\in\mathscr{D}}\radem_Q\D_Q^{b_1}f}{L^p(\prob\otimes\mu;X)}
  \lesssim \epsilon(r)\Norm{f}{L^p(\mu;X)}.
\end{equation*}

Using the similar estimate for \(g_{\bad}\), and substituting back to \eqref{eq:almostThere}, it follows that
\begin{equation*}
  \Norm{T}{\bddlin(L^p(\mu;X))}
  \leq C(r,\eta)+C(r)\eta^{1/t}\Norm{T}{\bddlin(L^p(\mu;X))}
      +C\epsilon(r)\Norm{T}{\bddlin(L^p(\mu;X))}.
\end{equation*}
Now one first fixes a large enough \(r\) so that \(C\epsilon(r)<\frac{1}{3}\). Then one picks a small enough \(\eta\) so that \(C(r)\eta^{1/t}<\frac{1}{3}\). Thus
\begin{equation*}
  \Norm{T}{\bddlin(L^p(\mu;X))}
  \leq C(r,\eta)+\big(\frac{1}{3}+\frac{1}{3}\big)\Norm{T}{\bddlin(L^p(\mu;X))},
\end{equation*}
and this completes the proof of \(Tb\) theorem~\ref{thm:main}.


\section{Operator-valued kernels}\label{sec:operator}

This section explains the extension of \(Tb\) theorem~\ref{thm:main} to the case of operator-valued kernels \(K(x,y)\in\bddlin(X)\), as stated in \(Tb\) theorem~\ref{thm:operator}.  Following the ``Rademacher rule of thumb'' for operator-kernels mentioned in the Introduction, define a \emph{\(d\)-dimensional Rademacher--Calder\'on--Zygmund kernel} as a function \(K(x,y)\) of variables \(x,y\in\R^N\) with \(x\neq y\) and taking values in \(\bddlin(X)\), which satisfies
\begin{equation}\label{eq:RCZK1}
  \rbound\big(\{\abs{x-y}^d K(x,y):x,y\in\R^N,x\neq y\}\big)\leq 1,
\end{equation}
\begin{equation}\label{eq:RCZK2}
\begin{split}
    \rbound\Big(\Big\{\frac{\abs{x-y}^{d+\alpha}}{\abs{x-x'}^{\alpha}} [K(x,y)-K(x',y)],
        \frac{\abs{x-y}^{d+\alpha}}{\abs{x-x'}^{\alpha}}[K(y,x)-K(y,x')]:& \\
    x,x',y\in\R^N,\abs{x-y}>2\abs{x-x'}>0\Big\}&\Big)\leq 1
\end{split}
\end{equation}
for some \(\alpha>0\). Recall that \(\rbound(\mathscr{T})\) designates the Rademacher-bound of the set \(\mathscr{T}\), as defined after~\eqref{eq:defRbd}. As in the scalar case, multiplicative constants could be allowed in these conditions, but will be supressed.

Let \(T:f\mapsto Tf\) be a linear operator acting on some functions \(f:\R^N\to X\) or \(f:\R^N\to\C\), producing new functions \(Tf:\R^N\to X\) in the former case and \(Tf:\R^N\to\bddlin(X)\) in the latter. If \(\xi\in X\) and \(F:\R^N\to\C\) or \(F:\R^N\to\bddlin(X)\), define the function \(F\otimes\xi:\R^N\to X\) by \((F\otimes\xi)(x):=F(x)\xi\), where the last expression is the product of a scalar and a vector, or the action of an operator on a vector, respectively. With this notation, suppose that \(T\big(\varphi\otimes\xi\big)=(T\varphi)\otimes\xi\) for \(\varphi:\R^N\to\C\) and \(\xi\in X\). The adjoint \(T^*\) is defined via the duality \(\pair{g}{f}=\int \pair{g(x)}{f(x)}\ud\mu(x)\) between functions \(f:\R^N\to X\) and \(g:\R^N\to X^*\): for \(\varphi,\psi:\R^N\to\C\), \(\xi\in X\) and \(\xi^*\in X^*\),
\begin{equation*}
  \xi^*\big(\pair{\psi}{T\varphi}\xi\big)
  =\pair{\psi\otimes\xi^*}{T(\varphi\otimes\xi)}
  =:\pair{T^*(\psi\otimes\xi^*)}{\varphi\otimes\xi}
  =:\big(\pair{T^*\psi}{\varphi}\xi^*\big)(\xi),
\end{equation*}
and hence \(\pair{T^*\psi}{\varphi}=\big(\pair{\psi}{T\varphi}\big)^*\in\bddlin(X^*)\) for scalar-valued functions \(\varphi,\psi\).

Such a \(T\) is called a  \emph{Rademacher--Calder\'on--Zygmund operator} with kernel \(K\) if
\begin{equation}\label{eq:RTf}
  Tf(x)=\int_{\R^N}K(x,y)f(y)\ud\mu(y)
\end{equation}
for \(x\) outside the support of \(f\).
An operator \(T\) is said to satisfy the \emph{rectangular weak Rademacher-boundedness property} if there holds
\begin{equation*}
  \rbound\Big(\Big\{\frac{1}{\mu(R)}\int_{\R^N} 1_R\cdot T1_R \ud\mu:
      R\subset\R^N\text{ a rectangle}\Big\}\Big)\leq 1.
\end{equation*}
Recall that in \(Tb\) theorem~\ref{thm:operator}, this assumption is made for \(M_{b_2}TM_{b_1}\) in place of \(T\), where \(b_1,b_2\) are two fixed weakly accretive functions.
As in the scalar-kernel case, the simplifying assumption is made that \(T\) already defines an a priori bounded operator on \(L^p(\mu;X)\).

At this point, one can already explain the modifications in the proof of \(Tb\) theorem~\ref{thm:main}, except the part involving the paraproduct, which are required to get the operator-valued \(Tb\) theorem~\ref{thm:operator}. It is very simple: one just repeats the same proof, and the assumed Rademacher-boundedness conditions ensure that whenever one ``pulled out'' bounded scalar coefficients from the randomized series (which persist throughout the arguments), the same can be done with the operator coefficients by the very definition~\eqref{eq:defRbd}. This is by now completely standard in the study of operator-valued singular integrals (cf.~\cite{H:Tb,HW:T1,Weis}), and it would be redundant to say anything more here.

One still has to make sense of the actual \(Tb\) conditions and comment on their r\^ole in handling the paraproduct part of \(T\) in the operator-kernel case. To this end, observe first that formally
\begin{equation*}
\begin{split}
  \pair{T^* b_2}{b_1\varphi_Q}
  &=\big(\pair{b_2}{T(b_1\varphi_Q)}\big)^* \\
  &=\big(\pair{1_{2Q}b_2}{T(b_1\varphi_Q)}\big)^*
    +\big(\pair{1_{(2Q)^c}b_2}{T(b_1\varphi_Q)}\big)^*
  \in\bddlin(X^*).
\end{split}
\end{equation*}
The first term is the adjoint of the operator
\begin{equation*}
  \xi\in X\mapsto\pair{1_{2Q}b_2}{T(b_1\varphi_Q)}\xi
  =\pair{1_{2Q}b_2}{T(b_1\varphi_Q\otimes\xi)}\in X,
\end{equation*}
where the right side is well-defined, since \(f:=b_1\varphi_Q\otimes\xi\), and then \(Tf\), is in \(L^p(\mu;X)\) and \(1_{2Q}b_2\in L^{p'}(\mu)\). By \eqref{eq:RTf} and the fact that \(b_1\varphi_Q\) has a vanishing integral, the second term involves the pairing
\begin{equation*}
\begin{split}
  &\int_{(2Q)^c}b_2(x)\int_Q K(x,y)b_1(y)\varphi_Q(y)\ud\mu(y)\ud\mu(x) \\
  &=\int_{(2Q)^c}\int_Q b_2(x)[K(x,y)-K(x,y_Q)]b_1(y)\varphi_Q(y)\ud\mu(y)\ud\mu(x),
\end{split}
\end{equation*}
where \(y_Q\) is the centre of \(Q\) and the \(\bddlin(X)\)-valued double integral converges absolutely by~\eqref{eq:RCZK2} (even the uniform boundedness instead of Rademacher-boundedness would suffice here).

In \(Tb\) theorem~\ref{thm:operator}, it was assumed that
\begin{equation*}
  \Norm{T^*b_2}{\BMO^{p'}_{\lambda}(\mu;Z)}\leq 1,\qquad Z\subseteq\bddlin(X^*).
\end{equation*}
This condition can be interpreted as follows: There exists a function
\begin{equation*}
  h_2\in\BMO^{p'}_{\lambda}(\mu;Z)\subseteq\BMO^{p'}_{\lambda}(\mu;\bddlin(X^*))
\end{equation*}
of norm at most \(1\), such that
\begin{equation*}
  \pair{T^* b_2}{b_1\varphi_Q}=\pair{h_2}{b_1\varphi_Q}
  =\int h_2(x)b_1(x)\varphi_Q(x)\ud x
\end{equation*}
for all ``Haar'' functions \(\varphi_Q\). The space \(\BMO^{p'}_{\lambda}(\mu;Z)\) is defined just like the scalar-valued version (see~\eqref{eq:defBMO}), only using the norm of \(Z\) in place of the absolute value. Similarly one interprets the condition that \(Tb_1=h_1\in\BMO^{p}_{\lambda}(\mu;Y)\).

\begin{lemma}\label{lem:BMOumd}
For \(p\in(1,\infty)\) and \(h\in\BMO_{\lambda}^p(\mu;Z)\), where \(Z\) is a UMD space, there holds
\begin{equation*}
  \BNorm{\sum_{\ontop{Q\in\mathscr{D}^{\good};Q\subset R}{\ell(Q)\leq 2^{-r}\ell(R)}}
    \radem_Q\pair{h}{b_1\varphi_Q}\varphi_Q}{L^p(\prob\otimes\mu;Z)}
  \lesssim \mu(R)^{1/p}\Norm{h}{\BMO_{\lambda}^p(\mu;Z)}.
\end{equation*}
\end{lemma}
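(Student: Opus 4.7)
The plan is to follow the scalar argument of Lemma~\ref{lem:NTV51} almost verbatim, replacing its only real analytic input (the scalar unconditionality of the Haar expansion) by its $Z$-valued analogue, which is precisely Proposition~\ref{prop:UC} applied to the UMD space $Z$.

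First I would reproduce the Whitney construction: let $\mathscr{W}$ be the collection of maximal dyadic cubes $S\in\mathscr{D}$ with $S\subset R$, $\ell(S)\leq 2^{-r}\ell(R)$, and $\dist(S,R^c)\geq\lambda\ell(S)$. As in the scalar proof, the condition \eqref{eq:rAtLeast} together with the definition of goodness ensures that every good $Q$ appearing on the left lies in a unique $S\in\mathscr{W}$, and the dilates $\lambda S$ have bounded overlap inside $R$. Normalizing $\Norm{h}{\BMO_{\lambda}^p(\mu;Z)}=1$, for each $S\in\mathscr{W}$ set $f_S:=1_S(h-\ave{h}_S)b_1\in L^p(\mu;Z)$, which satisfies $\Norm{f_S}{L^p(\mu;Z)}^p\leq \mu(\lambda S)$ by the definition of $\BMO_{\lambda}^p(\mu;Z)$ and the boundedness of $b_1$. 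The cancellation $\int b_1\varphi_Q\ud\mu=0$ and the inclusion $\supp(b_1\varphi_Q)\subseteq Q\subseteq S$ give
\[
  \pair{f_S}{\varphi_Q}=\pair{h}{b_1\varphi_Q}\qquad\text{for every good }Q\subseteq S.
\]

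Next I would invoke Proposition~\ref{prop:UC} with the UMD space $X$ taken to be $Z$: combined with the rank-one decomposition \eqref{eq:twistedHaar} of the twisted martingale differences and the fact that $b_1$ is bounded above and below, this yields the $Z$-valued unconditional bound
\[
  \BNorm{\sum_{\substack{Q\in\mathscr{D}^{\good}\\ Q\subseteq S}}\radem_Q\pair{h}{b_1\varphi_Q}\varphi_Q}{L^p(\prob\otimes\mu;Z)}^p
  \lesssim\Norm{f_S}{L^p(\mu;Z)}^p\leq \mu(\lambda S),
\]
which is exactly the vector-valued analogue of \eqref{eq:BMOuc}.

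Finally I would sum over $S\in\mathscr{W}$. Because each $\varphi_Q$ with $Q\subseteq S$ is supported in $S$, and the cubes $S\in\mathscr{W}$ are pairwise disjoint, the $L^p(\prob\otimes\mu;Z)$-norm raised to the $p$-th power is additive over $S$. Together with the bounded overlap $\sum_{S\in\mathscr{W}}\mu(\lambda S)\lesssim\mu(R)$, this gives the desired estimate. The main conceptual point—which is not really an obstacle since Proposition~\ref{prop:UC} was stated for an arbitrary UMD space—is the use of UMD of $Z$ at exactly the one step where the scalar proof used UMD of $\C$; no further structural assumption on $Z$ is required because the $\pair{h}{b_1\varphi_Q}$ are just the $Z$-valued coefficients of an otherwise scalar-coefficient Haar expansion.
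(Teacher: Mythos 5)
Your proposal is correct and follows precisely the route the paper itself takes: the paper's proof of Lemma~\ref{lem:BMOumd} is the single sentence ``the proof is exactly the same as that of Lemma~\ref{lem:NTV51} dealing with $Z=\C$; indeed, only the UMD property of $\C$ was employed there,'' and you have simply spelled out what that means, correctly identifying the Whitney decomposition, the vanishing-moment cancellation making $\pair{f_S}{\varphi_Q}=\pair{h}{b_1\varphi_Q}$ for $Q\subseteq S$, the $Z$-valued unconditionality via Proposition~\ref{prop:UC}, and the disjointness/bounded-overlap summation as the ingredients that carry over verbatim. One very minor slip in phrasing: $b_1$ itself is only bounded above, while it is $\Exp_k b_1$ that is bounded away from zero by weak accretivity; this is exactly the same convention the paper uses in the proof of Proposition~\ref{prop:UC} and does not affect the argument.
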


\begin{proof}
The proof is exactly the same as that of Lemma~\ref{lem:NTV51} dealing with \(Z=\C\); indeed, only the UMD property of \(\C\) was employed there.
\end{proof}

\begin{theorem}\label{thm:umdPara}
Under the hypotheses of $Tb$ theorem~\ref{thm:operator},
\begin{equation*}
  \Norm{\Pi_2 g}{\bddlin(L^{p'}(\mu;X^*))}\lesssim\Norm{T^*b_2}{\BMO^{p'}_{\lambda}(\mu;Z)}\leq 1.
\end{equation*}
\end{theorem}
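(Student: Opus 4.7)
The plan is to revisit the proof of Theorem~\ref{thm:paraproduct} and replace the scalar ingredients by their $Z$-valued counterparts throughout. The three ingredients to lift to the operator-valued setting are (i) the unconditionality of the Haar system, (ii) the abstract paraproduct boundedness of Theorem~\ref{thm:Pf}, and (iii) the Carleson-type bound for Haar coefficients of a BMO function.

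First, I would interpret the assumption $\Norm{T^*b_2}{\BMO^{p'}_\lambda(\mu;Z)}\le 1$ as providing a function $h_2\in\BMO^{p'}_\lambda(\mu;Z)\subseteq\BMO^{p'}_\lambda(\mu;\bddlin(X^*))$ of norm at most $1$ such that $\pair{T^*b_2}{b_1\varphi_Q}=\pair{h_2}{b_1\varphi_Q}$ for every Haar function $\varphi_Q$. Consequently, the auxiliary function
\begin{equation*}
  \Phi_R=\sum_{\substack{Q\in\mathscr{D}_{\good};\,Q\subset R \\ \ell(Q)=2^{-r}\ell(R)}}\pair{h_2}{b_1\varphi_Q}\frac{1}{\ave{b_2}_R}\cdot\varphi_Q
\end{equation*}
takes values in $Z\subseteq\bddlin(X^*)$. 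Applying the unconditionality of the Haar system in $L^{p'}(\mu;X^*)$ (valid by Proposition~\ref{prop:UC}, since $X$ and hence $X^*$ is UMD), one reduces to estimating the norm of $\sum_{R\in\mathscr{D}_{\good}'}\radem_R\Phi_R\,\ave{g}_R$ in $L^{p'}(\prob\otimes\mu;X^*)$, exactly as in \eqref{eq:PiVsAbstractParapr}.

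Next, I would follow the tangent-martingale splitting of Section~\ref{sec:para} verbatim: introduce the refined filtrations $\mathscr{E}_k$ and $\mathscr{E}_k'$, split the $k$-summation into residue classes modulo $2r+1$, and invoke Theorem~\ref{thm:trick} to transfer each $\Phi_R(x)$ to an independent variable $y_R$. This puts the quantity into the exact form of an abstract paraproduct \eqref{eq:Pf}, to which Theorem~\ref{thm:Pf} applies with the choice $X_1=X^*$, $X_3=L^{p'}(\nu;X^*)$, and $X_2=L^{p'}(\nu;Z)\subseteq\bddlin(X_1,X_3)$ under the canonical pointwise-application embedding. The UMD hypothesis on $X_3$ is inherited from $X^*$, which is UMD because $X$ is; the UMD hypothesis on $Z$ enters exactly here, via Proposition~\ref{prop:JN}, to guarantee the finiteness and John--Nirenberg equivalence of the associated Carleson norms in $X_2$.

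Finally, the Carleson-norm bound parallels \eqref{eq:applyPf}: a second application of Theorem~\ref{thm:trick} returns $y_R$ to $x$, and reorganising the sum in terms of the maximal cubes $P\in\mathscr{D}_{\good}'$ contained in a fixed $S$ reduces matters to inequalities of the form
\begin{equation*}
  \BNorm{\sum_{\substack{Q\in\mathscr{D}_{\good};\,Q\subset P \\ \ell(Q)\le 2^{-r}\ell(P)}}\radem_Q\pair{h_2}{b_1\varphi_Q}\varphi_Q}{L^{p'}(\prob\otimes\mu;Z)}
  \lesssim\mu(P)^{1/p'}\Norm{h_2}{\BMO^{p'}_\lambda(\mu;Z)},
\end{equation*}
which is exactly the $Z$-valued Lemma~\ref{lem:BMOumd}; summing over the disjoint maximal $P$'s recovers $\mu(S)^{1/p'}$ on the right. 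The main obstacle, or rather the main conceptual point, is to check that the abstract framework of Theorem~\ref{thm:Pf} accommodates the operator-valued coefficients: the choice $X_2=L^{p'}(\nu;Z)$, forced by the prior tangent-martingale reduction, is precisely what permits the mere UMD property of $Z$ to suffice, bypassing the stronger Rademacher-boundedness condition on the unit ball of $Z$ used in~\cite{H:Tb}.
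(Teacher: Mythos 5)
Your proposal matches the paper's proof of Theorem~\ref{thm:umdPara} essentially verbatim: reinterpret the $\BMO^{p'}_\lambda(\mu;Z)$ assumption as providing $h_2$, run the tangent-martingale reduction exactly as in Section~\ref{sec:para}, apply Theorem~\ref{thm:Pf} with $X_1=X^*$, $X_3=L^{p'}(\nu;X^*)$, $X_2=L^{p'}(\nu;Z)$, and close with Lemma~\ref{lem:BMOumd}. One small misattribution: you say the UMD hypothesis on $Z$ enters via Proposition~\ref{prop:JN}, but that proposition (the $\Car^p$/John--Nirenberg equivalence) requires no geometric assumption on the target space; the UMD property of $Z$ is actually used inside Lemma~\ref{lem:BMOumd}, whose proof invokes the unconditionality of the twisted Haar system in $L^{p'}(\prob\otimes\mu;Z)$, the step analogous to \eqref{eq:BMOuc} in Lemma~\ref{lem:NTV51}.
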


\begin{proof}
This repeats the proof of Theorem~\ref{thm:paraproduct}. The initial considerations and the application of the tangent martingale trick of Theorem~\ref{thm:trick} for $X^*$-valued functions work in exactly the same way as before. In place of \eqref{eq:applyPf}, one applies the abstract paraproduct estimate of Theorem~\ref{thm:Pf} with $X_1=X^*$ and $X_3=L^q(\nu;X^*)$ ($q:=p'$) as before, but now with $X_2:=L^q(\nu;Z)\subset\bddlin(X_1,X_3)$, recalling that $Z\subset\bddlin(X^*)$. And the right side of \eqref{eq:applyPf} is then estimated as before, with Lemma~\ref{lem:BMOumd} in place of Lemma~\ref{lem:NTV51}. Finally, note that the last inequality of the assertion is here assumed as stated, contrary to the scalar-kernel Theorem~\ref{thm:paraproduct} where it was deduced from \(\Norm{T^*b_2}{\BMO^1_{\lambda}(\mu)}\leq 1\) and the other assumptions by the results of Nazarov, Treil and Volberg~\cite{NTV:Tb}.
\end{proof}

Thus all the arguments of $Tb$ theorem~\ref{thm:main} carry over to the setting of $Tb$ theorem~\ref{thm:operator}, and the sketch of the proof of the last-mentioned result is complete.

\section*{Acknowledgements}

Since the circulation of the first preprint version, the careful reading of the manuscript by Dr.~Antti V\"ah\"a\-kangas, and our discussions around this theme, led to several improvements throughout the work. I am particularly grateful for his discovery of a gap in an earlier version of the argument in Section~\ref{sec:contained}.


\bibliography{nonhomog}

\begin{thebibliography}{10}

\bibitem{Bourgain:83}
J.~Bourgain.
\newblock Some remarks on {B}anach spaces in which martingale difference
  sequences are unconditional.
\newblock {\em Ark. Mat.}, 21(2):163--168, 1983.

\bibitem{Bourgain:86}
J.~Bourgain.
\newblock Vector-valued singular integrals and the {$H\sp 1$}-{BMO} duality.
\newblock In {\em Probability theory and harmonic analysis (Cleveland, Ohio,
  1983)}, volume~98 of {\em Monogr. Textbooks Pure Appl. Math.}, pages 1--19.
  Dekker, New York, 1986.

\bibitem{Burkholder}
D.~L. Burkholder.
\newblock A geometric condition that implies the existence of certain singular
  integrals of {B}anach-space-valued functions.
\newblock In {\em Conference on harmonic analysis in honor of Antoni Zygmund,
  Vol. I, II (Chicago, Ill., 1981)}, Wadsworth Math. Ser., pages 270--286.
  Wadsworth, Belmont, CA, 1983.

\bibitem{CMM}
R.~R. Coifman, A.~McIntosh, and Y.~Meyer.
\newblock L'int\'egrale de {C}auchy d\'efinit un op\'erateur born\'e sur
  {$L\sp{2}$} pour les courbes lipschitziennes.
\newblock {\em Ann. of Math. (2)}, 116(2):361--387, 1982.

\bibitem{DJS}
G.~David, J.-L. Journ{\'e}, and S.~Semmes.
\newblock Op\'erateurs de {C}alder\'on-{Z}ygmund, fonctions para-accr\'etives
  et interpolation.
\newblock {\em Rev. Mat. Iberoamericana}, 1(4):1--56, 1985.

\bibitem{DJT}
J.~Diestel, H.~Jarchow, and A.~Tonge.
\newblock {\em Absolutely summing operators}, volume~43 of {\em Cambridge
  Studies in Advanced Mathematics}.
\newblock Cambridge University Press, Cambridge, 1995.

\bibitem{Figiel:88}
T.~Figiel.
\newblock On equivalence of some bases to the {H}aar system in spaces of
  vector-valued functions.
\newblock {\em Bull. Polish Acad. Sci. Math.}, 36(3-4):119--131 (1989), 1988.

\bibitem{Figiel:90}
T.~Figiel.
\newblock Singular integral operators: a martingale approach.
\newblock In {\em Geometry of Banach spaces (Strobl, 1989)}, volume 158 of {\em
  London Math. Soc. Lecture Note Ser.}, pages 95--110. Cambridge Univ. Press,
  Cambridge, 1990.

\bibitem{FigWoj}
T.~Figiel and P.~Wojtaszczyk.
\newblock Special bases in function spaces.
\newblock In {\em Handbook of the geometry of Banach spaces, Vol. I}, pages
  561--597. North-Holland, Amsterdam, 2001.

\bibitem{Herz}
C.~Herz.
\newblock Bounded mean oscillation and regulated martingales.
\newblock {\em Trans. Amer. Math. Soc.}, 193:199--215, 1974.

\bibitem{H:Tb}
T.~Hyt{\"o}nen.
\newblock An operator-valued {$Tb$} theorem.
\newblock {\em J. Funct. Anal.}, 234(2):420--463, 2006.

\bibitem{H:pseudoloc}
T.~Hyt{\"o}nen.
\newblock Pseudo-localisation of singular integrals in {$L^p$}.
\newblock Preprint, arXiv:0909.4481, 2009.

\bibitem{H:Eichstaett}
T.~Hyt{\"o}nen.
\newblock Vector-valued extension of linear operators, and {$Tb$} theorems.
\newblock In {\em Vector measures, integration and related topics}, volume 201
  of {\em Oper. Theory Adv. Appl.}, pages 245--254. Birkh\"auser, Basel, 2010.

\bibitem{HMP}
T.~Hyt{\"o}nen, A.~McIntosh, and P.~Portal.
\newblock Kato's square root problem in {B}anach spaces.
\newblock {\em J. Funct. Anal.}, 254(3):675--726, 2008.

\bibitem{HV}
T.~Hyt{\"o}nen and M.~Veraar.
\newblock {$R$}-boundedness of smooth operator-valued functions.
\newblock {\em Integral Equations Operator Theory}, 63(3):373--402, 2009.

\bibitem{HW:T1}
T.~Hyt{\"o}nen and L.~Weis.
\newblock A {$T1$} theorem for integral transformations with operator-valued
  kernel.
\newblock {\em J. Reine Angew. Math.}, 599:155--200, 2006.

\bibitem{Kemppainen}
M.~Kemppainen.
\newblock On the {R}ademacher maximal function.
\newblock Preprint, arXiv:0912.3358, 2009.

\bibitem{MMV}
P.~Mattila, M.~S. Melnikov, and J.~Verdera.
\newblock The {C}auchy integral, analytic capacity, and uniform rectifiability.
\newblock {\em Ann. of Math. (2)}, 144(1):127--136, 1996.

\bibitem{Maurey}
B.~Maurey.
\newblock Syst\`eme de {H}aar.
\newblock In {\em S\'eminaire Maurey-Schwartz 1974--1975: Espaces L$^{p}$,
  applications radonifiantes et g\'eom\'etrie des espaces de Banach, Exp. Nos.
  I et II}, pages 26 pp. (erratum, p. 1). Centre Math., \'Ecole Polytech.,
  Paris, 1975.

\bibitem{McConnell}
T.~R. McConnell.
\newblock Decoupling and stochastic integration in {UMD} {B}anach spaces.
\newblock {\em Probab. Math. Statist.}, 10(2):283--295, 1989.

\bibitem{MelVer}
M.~S. Melnikov and J.~Verdera.
\newblock A geometric proof of the {$L\sp 2$} boundedness of the {C}auchy
  integral on {L}ipschitz graphs.
\newblock {\em Internat. Math. Res. Notices}, (7):325--331, 1995.

\bibitem{NTV:Cauchy}
F.~Nazarov, S.~Treil, and A.~Volberg.
\newblock Cauchy integral and {C}alder\'on-{Z}ygmund operators on
  nonhomogeneous spaces.
\newblock {\em Internat. Math. Res. Notices}, (15):703--726, 1997.

\bibitem{NTV:weak}
F.~Nazarov, S.~Treil, and A.~Volberg.
\newblock Weak type estimates and {C}otlar inequalities for
  {C}alder\'on-{Z}ygmund operators on nonhomogeneous spaces.
\newblock {\em Internat. Math. Res. Notices}, 1998(9):463--487, 1998.

\bibitem{NTV:system}
F.~Nazarov, S.~Treil, and A.~Volberg.
\newblock Accretive system {$Tb$}-theorems on nonhomogeneous spaces.
\newblock {\em Duke Math. J.}, 113(2):259--312, 2002.

\bibitem{NTV:Tb}
F.~Nazarov, S.~Treil, and A.~Volberg.
\newblock The {$Tb$}-theorem on non-homogeneous spaces.
\newblock {\em Acta Math.}, 190(2):151--239, 2003.

\bibitem{Tolsa:Cauchy}
X.~Tolsa.
\newblock {$L\sp 2$}-boundedness of the {C}auchy integral operator for
  continuous measures.
\newblock {\em Duke Math. J.}, 98(2):269--304, 1999.

\bibitem{Tolsa:RBMO}
X.~Tolsa.
\newblock B{MO}, {$H\sp 1$}, and {C}alder\'on-{Z}ygmund operators for non
  doubling measures.
\newblock {\em Math. Ann.}, 319(1):89--149, 2001.

\bibitem{Tolsa:LP}
X.~Tolsa.
\newblock Littlewood-{P}aley theory and the {$T(1)$} theorem with non-doubling
  measures.
\newblock {\em Adv. Math.}, 164(1):57--116, 2001.

\bibitem{Weis}
L.~Weis.
\newblock Operator-valued {F}ourier multiplier theorems and maximal {$L\sb
  p$}-regularity.
\newblock {\em Math. Ann.}, 319(4):735--758, 2001.

\end{thebibliography}
\bibliographystyle{plain}

\end{document}